\definecolor{darkred}{rgb}{0.4,0.1,0.1}
\definecolor{darkblue}{rgb}{0.1,0.1,0.4}
\definecolor{darkgrey}{rgb}{0.5,0.5,0.5}
\numberwithin{equation}{section}
\theoremstyle{plain}
\newtheorem{thm}{Theorem}[section]
\newtheorem{lem}[thm]{Lemma}
\newtheorem{prop}[thm]{Proposition}
\newtheorem{cor}[thm]{Corollary}
\newtheorem{definition}[thm]{Definition}
\theoremstyle{remark}
\newtheorem{remark}[thm]{Remark}
\theoremstyle{plain}
\newcommand{\hyp}[1]{$C^{2}$-hypersurface as in Definition~\ref{definition_hypersurface}}
\DeclareMathOperator\ran{ran}
\newcommand{\dom}{\mathrm{dom}\,}
\begin{document}
\title[]{On Dirac operators with electrostatic $\delta$-shell interactions of critical strength}
\author{Jussi Behrndt}
\address{Institut f\"{u}r Numerische Mathematik\\
Technische Universit\"{a}t Graz\\
 Steyrergasse 30, 8010 Graz, Austria\\
E-mail: {\tt behrndt@tugraz.at}}

\author{Markus Holzmann}
\address{Institut f\"{u}r Numerische Mathematik\\
Technische Universit\"{a}t Graz\\
 Steyrergasse 30, 8010 Graz, Austria\\
E-mail: {\tt holzmann@math.tugraz.at}}

\begin{abstract}
  In this paper we prove that the Dirac operator $A_\eta$ with an electrostatic $\delta$-shell interaction
of critical strength $\eta = \pm 2$ supported on a $C^2$-smooth compact surface $\Sigma$
is self-adjoint in $L^2(\mathbb{R}^3;\mathbb{C}^4)$, we 
describe the domain explicitly in terms of traces and jump conditions in $H^{-1/2}(\Sigma; \mathbb{C}^4)$,
and we investigate the spectral properties of $A_\eta$.
While the non-critical interaction strengths $\eta \not= \pm 2$ have received a lot of attention in the recent past,
the critical case $\eta = \pm 2$ remained open. Our approach is based on abstract techniques in 
extension theory of symmetric operators,
in particular, boundary triples and their Weyl functions. 
\end{abstract}

\keywords{Dirac operator; shell interaction; critical interaction strength; self-adjoint extension; boundary triple}

\subjclass[2010]{Primary 35Q40; Secondary 81Q10} 
\maketitle

\section{Introduction} \label{section_introduction}

Dirac operators with electrostatic $\delta$-shell interactions attracted a lot of attention in the recent past,
see~\cite{AMV14, AMV15, AMV16, BEHL16_2, M15, MP16, OV16} or the related papers~\cite{ALTR16, BFSB17_1, BFSB17_2}.
From the physical point of view they are the relativistic counterpart of Schr\"odinger operators with 
$\delta$-potentials, which are used as idealized models for Schr\"odinger operators
with strongly localized regular potentials, cf.~\cite{AGHH05, BEHL16_1, E08, KP31} and the references therein.
On the other hand Dirac operators with 
electrostatic $\delta$-shell interactions are also interesting from the mathematical point of view, since
it can be expected that their spectral properties depend on the geometry of the interaction support and/or the 
interaction strength; such effects are studied in the monograph \cite{EK15}
and, e.g., in \cite{BEKS94, E05, E08, EHL06, EP14}
for Schr\"odinger operators with $\delta$-potentials.

The mathematical study of Dirac operators with singular interactions supported on a set of measure zero started in 
the 1980s. In the one-dimensional case several results as, e.g., a description of the spectrum, an explicit resolvent 
formula, the approximation by Dirac operators with squeezed potentials and their convergence in the nonrelativistic 
limit were deduced in \cite{AGHH05, CMP13,
GS87, PR14, S89}. Making use of these results and a decomposition into spherical harmonics 
J.~Dittrich, P.~Exner, and P.~\v{S}eba studied
the Dirac operator in $\mathbb{R}^3$ with a singular perturbation supported on a sphere in \cite{DES89}.
The investigation of the Dirac operator in $\mathbb{R}^3$ with singular perturbations supported on more general surfaces 
was initiated only recently in the pioneering paper~\cite{AMV14} by N.~Arrizabalaga, A.~Mas, and L.~Vega,
where a new approach to extension theory of symmetric operators was employed; this research was continued in 
\cite{AMV15, AMV16, M15, MP16}. 
A different approach using the abstract theory of quasi boundary triples and their Weyl functions from~\cite{BL07, BL12}
was proposed by P.~Exner, V.~Lotoreichik, and the authors of the present paper in \cite{BEHL16_2}.

In what follows we fix some notations and describe several already obtained results to set up the problem treated in this paper.
Let us choose units such that the Planck constant $\hbar$ and the speed of light are both equal to one. 
The free Dirac operator $A_0$ in 
$L^2(\mathbb{R}^3; \mathbb{C}^4)$ is given by 
\begin{equation*} 
  A_0 f := -i \sum_{j=1}^3 \alpha_j \partial_j f + m \beta f, \qquad \dom A_0 = H^1(\mathbb{R}^3; \mathbb{C}^4),
\end{equation*}
where the Dirac matrices $\alpha_1, \alpha_2, \alpha_3$ and $\beta$ are defined by \eqref{def_Dirac_matrices} below.
The operator $A_0$ describes the motion of a free spin-$\frac{1}{2}$ particle with mass $m>0$ in $\mathbb{R}^3$ taking relativistic effects
into account. Furthermore, let $\Sigma$ be the boundary of a bounded 
$C^2$-smooth domain $\Omega_+ \subset \mathbb{R}^3$ and let $\Omega_- := \mathbb{R}^3 \setminus \overline{\Omega_+}$.
The Dirac operator with an electrostatic $\delta$-shell interaction of strength $\eta \in \mathbb{R}$ supported 
on $\Sigma$ is formally given by
\begin{equation*}
  A_\eta = A_0 + \eta I_4 \delta_\Sigma;
\end{equation*}
here $I_4$ stands for the identity matrix in $\mathbb{C}^{4 \times 4}$. In a mathematically rigorous form $A_\eta$, $\eta\not=\pm 2$, is defined
in \cite{AMV14, BEHL16_2} as a particular self-adjoint extension of the symmetric operator 
\begin{equation*}
  S := A_0 \upharpoonright H^1_0(\mathbb{R}^3 \setminus \Sigma; \mathbb{C}^4).
\end{equation*}
Observe that $S$ is the restriction of the free Dirac operator to functions that vanish on $\Sigma$.
Roughly speaking, a function $f \in \dom S^*$ belongs to $\dom A_\eta$ if the traces of 
$f_\pm := f\upharpoonright \Omega_\pm$ satisfy the jump condition
\begin{equation} \label{equation_jump_condition_intro}
  \frac{\eta}{2} \big( f_+|_\Sigma + f_-|_\Sigma \big) = -i \alpha \cdot \nu \big( f_+|_\Sigma - f_-|_\Sigma \big),
\end{equation}
where $\nu$ is the outer unit normal vector field of $\Omega_+$; cf. Definition~\ref{definition_A_eta} for more details.
Concerning the basic spectral properties of $A_\eta$ in the non-critical case $\eta\not=\pm 2$ the following theorem is known from 
\cite{AMV14, BEHL16_2} and Proposition~\ref{proposition_A_eta_self_adjoint_nice}.

\begin{thm} \label{theorem_spectrum_noncritical}
  For $\eta \in \mathbb{R} \setminus \{ \pm 2\}$ the operator $A_\eta$ is self-adjoint in $L^2(\mathbb{R}^3; \mathbb{C}^4)$ 
  and the following properties hold:
  \begin{itemize}
    \item[\rm (i)] $\dom A_\eta \subset H^1(\mathbb{R}^3 \setminus \Sigma; \mathbb{C}^4)$;
    \item[\rm (ii)] the essential spectrum of $A_\eta$ is given by
    \begin{equation*}
      \sigma_{\mathrm{ess}}(A_\eta) = (-\infty, -m] \cup [m, \infty);
    \end{equation*}
    \item[\rm (iii)] the discrete spectrum of $A_\eta$ in the gap $(-m, m)$ is finite.
  \end{itemize}
\end{thm}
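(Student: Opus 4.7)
The proof I would give follows the quasi boundary triple framework of \cite{BL07, BL12} as applied to this operator in \cite{BEHL16_2}. The setup is to introduce the operator $T := A_0 \upharpoonright H^1(\mathbb{R}^3 \setminus \Sigma; \mathbb{C}^4)$, whose closure is $S^*$, together with the boundary maps
\[
  \Gamma_0 f = -i\alpha\cdot\nu\bigl(f_+|_\Sigma - f_-|_\Sigma\bigr), \qquad \Gamma_1 f = \tfrac{1}{2}\bigl(f_+|_\Sigma + f_-|_\Sigma\bigr),
\]
so that the jump condition \eqref{equation_jump_condition_intro} becomes $\Gamma_0 f = \eta\Gamma_1 f$ and the extension in question is $A_\eta = T \upharpoonright \ker(\Gamma_0 - \eta\Gamma_1)$. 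That $\{L^2(\Sigma;\mathbb{C}^4), \Gamma_0, \Gamma_1\}$ is a quasi boundary triple for $T$ is checked via a Green-type identity on $\Omega_\pm$ using integration by parts and the anticommutation of the Dirac matrices, together with surjectivity of the trace $H^1(\Omega_\pm;\mathbb{C}^4) \to H^{1/2}(\Sigma;\mathbb{C}^4)$.

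Next, I would identify the $\gamma$-field $\gamma(\lambda)$ with the Dirac single-layer potential (the operator with the integral kernel of $(A_0 - \lambda)^{-1}$) and the Weyl function $M(\lambda)$ with the principal-value Cauchy-type singular integral on $\Sigma$ built from the same kernel. Self-adjointness of $A_\eta$ then reduces, by the abstract theory, to showing that $I - \eta M(\lambda)$ admits a bounded everywhere defined inverse for one (hence all) $\lambda$ in the resolvent set of $A_0$. The crucial algebraic input is a Plemelj-type squaring identity proved in \cite{AMV14}: the operator $(-i\alpha\cdot\nu) M(\lambda)$ squared equals $\tfrac14 I$ modulo a compact (even smoothing) remainder on $L^2(\Sigma;\mathbb{C}^4)$. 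This identifies $\eta = \pm 2$ precisely as the critical strengths at which invertibility fails; for $\eta \ne \pm 2$ a Fredholm alternative produces the inverse, which delivers self-adjointness of $A_\eta$ together with a Krein-type resolvent formula
\[
  (A_\eta - \lambda)^{-1} = (A_0 - \lambda)^{-1} + \eta\, \gamma(\lambda)\bigl(I - \eta M(\lambda)\bigr)^{-1}\gamma(\bar\lambda)^*.
\]

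Item (i) would follow by a bootstrap argument: $\gamma(\lambda)$ maps $H^{1/2}(\Sigma;\mathbb{C}^4)$ into $H^1(\mathbb{R}^3 \setminus \Sigma;\mathbb{C}^4)$ and $M(\lambda)$ smooths $L^2(\Sigma;\mathbb{C}^4)$ into $H^{1/2}(\Sigma;\mathbb{C}^4)$, so the boundary densities that appear in the resolvent formula lie in $H^{1/2}(\Sigma;\mathbb{C}^4)$ and the full resolvent takes values in $H^1(\mathbb{R}^3 \setminus \Sigma;\mathbb{C}^4)$. Item (ii) is immediate from the same factorisation: because $\Sigma$ is compact, the embedding $H^{1/2}(\Sigma;\mathbb{C}^4) \hookrightarrow L^2(\Sigma;\mathbb{C}^4)$ is compact, the resolvent difference $(A_\eta - \lambda)^{-1} - (A_0 - \lambda)^{-1}$ is compact, and Weyl's theorem yields $\sigma_{\mathrm{ess}}(A_\eta) = \sigma_{\mathrm{ess}}(A_0) = (-\infty,-m]\cup[m,\infty)$. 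Item (iii) would be obtained by applying the analytic Fredholm theorem to the holomorphic operator family $\lambda \mapsto I - \eta M(\lambda)$ on the gap $(-m,m)$: eigenvalues of $A_\eta$ there correspond to the (necessarily isolated, by analyticity) non-invertibility points, and compactness together with absence of accumulation strictly inside the open gap force their number to be finite.

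The hardest step is the detailed analysis of $M(\lambda)$, namely its $L^2$-boundedness, its $H^{1/2}$-smoothing property, and above all the Plemelj-type squaring identity that singles out $\eta = \pm 2$. This rests on the $C^2$-regularity of $\Sigma$ and a careful splitting of the Dirac Green's function into a Helmholtz principal part and lower-order corrections, with the jump contributions governed by the matrix $-i\alpha\cdot\nu$ already appearing in \eqref{equation_jump_condition_intro}.
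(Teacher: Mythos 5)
Your overall strategy — build the quasi boundary triple with trace maps on $H^1(\Omega_\pm;\mathbb{C}^4)$, identify $\gamma(\lambda)$ and $M(\lambda)$ with the Dirac single-layer potential and its boundary trace, and invoke a squaring identity plus the Fredholm alternative on $H^{1/2}(\Sigma;\mathbb{C}^4)$ to single out $\eta=\pm 2$ — is exactly the paper's route (Proposition~\ref{proposition_A_eta_self_adjoint_nice} and Proposition~\ref{proposition_extension_gamma_Weyl}~(iii)). A few details, however, need correcting.

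First, you have the Plemelj-type identity slightly scrambled: the identity $-4\bigl(M(\lambda)\,\alpha\cdot\nu\bigr)^2=I_4$ is \emph{exact} (Lemma~2.2 of [AMV15], see~\eqref{equation_M_inv}), not ``modulo compact.'' What is compact/smoothing is the anticommutator $\mathcal A=\alpha\cdot\nu\,M(\lambda)+M(\lambda)\,\alpha\cdot\nu$, and it is the factorisation
\[
M(\lambda)^2-\tfrac14 I_4 = M(\lambda)\,\alpha\cdot\nu\,\mathcal A
\]
(equation~\eqref{equation_M_square}) that delivers the compact remainder $K(\lambda)=\tfrac14 I_4-M(\lambda)^2$ used in the Fredholm argument in~\eqref{plm}. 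Your version of the identity, as stated, does not directly produce the relation needed to show that $\tfrac{1}{\eta^2}-\tfrac14 + K(\lambda)$ has the Fredholm structure. Second, and related, $M(\lambda)$ is \emph{not} smoothing from $L^2(\Sigma;\mathbb{C}^4)$ to $H^{1/2}(\Sigma;\mathbb{C}^4)$ — indeed $M(\lambda)$ is invertible, so it cannot gain half a derivative; it is bounded on $L^2$ and separately bounded on $H^{1/2}$ (Proposition~\ref{proposition_gamma_Weyl}). The regularity (i) is, in the paper, immediate from the fact that $A_\eta$ is defined as a restriction of $T$ acting on $H^1(\mathbb{R}^3\setminus\Sigma;\mathbb{C}^4)$; your bootstrap through the Krein formula also works, but only because the Fredholm inverse $\bigl(\tfrac{1}{\eta}I_4\mp M(\lambda)\bigr)^{-1}$ is established on $H^{1/2}(\Sigma;\mathbb{C}^4)$, not because $M(\lambda)$ smooths.

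The genuine gap is in item~(iii). Analyticity of $\lambda\mapsto I-\eta M(\lambda)$ on $(-m,m)$ together with the Fredholm structure shows only that the non-invertibility points are isolated in the open gap; it does not forbid accumulation at the band edges $\pm m$, which lie in $\sigma_{\rm ess}(A_0)$ and hence in $\sigma_{\rm ess}(A_\eta)$. Finiteness of the discrete spectrum in $(-m,m)$ therefore requires an additional argument controlling the boundary behaviour of the Birman--Schwinger family as $\lambda\to\pm m$ — this is exactly the part the paper delegates to [BEHL16\_2], where one shows that $M(\lambda)$ extends continuously (in a suitable operator topology) to the closed gap with a limiting operator for which $\tfrac{1}{\eta}\mp M(\pm m)$ remains boundedly invertible, precluding accumulation. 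Your phrase ``absence of accumulation strictly inside the open gap force their number to be finite'' is not a valid inference: an infinite sequence of isolated eigenvalues accumulating only at $\pm m$ would satisfy your hypothesis and violate the conclusion.
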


For an interaction strength $\eta \in \mathbb{R} \setminus \{ \pm 2 \}$ also 
various other results for the operator $A_\eta$ are known,
as, e.g., an abstract version of the Birman-Schwinger 
principle~\cite{AMV15, BEHL16_2},
an isoperimetric inequality~\cite{AMV16}, the existence and completeness of the 
wave operators for the pair $\{ A_\eta, A_0 \}$, the convergence in the nonrelativistic
limit~\cite{BEHL16_2}, and the approximation by Dirac operators with squeezed potentials including Klein's paradox \cite{MP16}.

We emphasize that in all papers \cite{AMV14, AMV15, AMV16, BEHL16_2, M15, MP16}
the critical interaction strengths $\eta = \pm 2$ were excluded. 
This situation is more difficult to handle with extension theoretic techniques and remained open so far. 
It is the goal of this paper to fill this gap. Our main result can be summarized as follows; cf. 
Theorem~\ref{theorem_self_adjoint_bad}, Theorem~\ref{proposition_properties_A_pm2}, and 
Theorem~\ref{theorem_essential_spectrum} for more details. 

\begin{thm} \label{theorem_main_result}
  The Dirac operator with an electrostatic $\delta$-shell interaction of critical strength $\eta = \pm 2$
  is self-adjoint in $L^2(\mathbb{R}^3; \mathbb{C}^4)$, its domain is not contained in 
  $H^1(\mathbb{R}^3 \setminus \Sigma; \mathbb{C}^4)$, the set $(-\infty,-m]\cup[m,\infty)$ belongs to the essential spectrum and essential spectrum may
  also appear in $(-m,m)$.
\end{thm}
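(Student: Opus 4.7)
The plan is to realize $A_{\pm 2}$ as a self-adjoint extension of $S$ via the abstract machinery of (ordinary) boundary triples and their Weyl functions applied to $S^*$, using the boundary space $\mathcal{G}=H^{-1/2}(\Sigma;\mathbb{C}^4)$. The main analytic ingredients are the single layer Dirac potential $\Phi_\lambda\colon H^{-1/2}(\Sigma;\mathbb{C}^4)\to L^2(\mathbb{R}^3;\mathbb{C}^4)$ associated with the resolvent of $A_0$ at a point $\lambda\in(-m,m)$, and its boundary integral operator $\mathcal{C}_\lambda$ on $H^{-1/2}(\Sigma;\mathbb{C}^4)$, which satisfy the Plemelj-type jump relations
\begin{equation*}
\gamma_\pm \Phi_\lambda \varphi \;=\; \mp \tfrac{i}{2}(\alpha\cdot\nu)\varphi + \mathcal{C}_\lambda\varphi,
\end{equation*}
and give rise to the decomposition $\dom S^* = \dom A_0 + \Phi_\lambda H^{-1/2}(\Sigma;\mathbb{C}^4)$. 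Translating the jump condition \eqref{equation_jump_condition_intro} with $\eta=\pm 2$ onto $\varphi$ and $u$, where $f=u+\Phi_\lambda\varphi$, yields $(I\pm 2\mathcal{C}_\lambda)\varphi = \mp 2 u|_\Sigma$. I will encode this as an abstract boundary relation $\Theta_{\pm 2}$ in $\mathcal{G}$; the task is to verify that $\Theta_{\pm 2}$ is self-adjoint in $\mathcal{G}$, which, together with the general extension theorem for boundary triples, implies self-adjointness of $A_{\pm 2}$ on $L^2(\mathbb{R}^3;\mathbb{C}^4)$.

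For the non-$H^1$ claim I will exhibit $\varphi\in H^{-1/2}(\Sigma;\mathbb{C}^4)\setminus L^2(\Sigma;\mathbb{C}^4)$ in $\ker(I\pm 2\mathcal{C}_0)$ (nontriviality of this kernel is precisely the obstruction that prevents the $L^2$-based quasi boundary triple of \cite{BEHL16_2} from working at $\eta=\pm 2$). Then $\Phi_0\varphi\in\dom A_{\pm 2}$ by construction, but since $\Phi_\lambda$ maps $L^2(\Sigma;\mathbb{C}^4)$ boundedly into $H^1(\mathbb{R}^3\setminus\Sigma;\mathbb{C}^4)$ while no such mapping extends to $H^{-1/2}\setminus L^2$ with values in $H^1$, the function $\Phi_0\varphi$ cannot belong to $H^1(\mathbb{R}^3\setminus\Sigma;\mathbb{C}^4)$.

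For the inclusion $(-\infty,-m]\cup [m,\infty)\subset\sigma_{\mathrm{ess}}(A_{\pm 2})$ I will use the standard Weyl singular sequence argument: to each $\lambda\in\sigma_{\mathrm{ess}}(A_0)$ one associates an orthonormal sequence $(\psi_n)\subset H^1(\mathbb{R}^3;\mathbb{C}^4)$ with $\mathrm{supp}\,\psi_n$ escaping to infinity such that $(A_0-\lambda)\psi_n\to 0$; for $n$ large such $\psi_n$ lie in $\dom S\subset\dom A_{\pm 2}$ and satisfy $A_{\pm 2}\psi_n = A_0\psi_n$, so the Weyl criterion forces $\lambda\in\sigma_{\mathrm{ess}}(A_{\pm 2})$. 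The genuinely new phenomenon is that the gap $(-m,m)$ may also contain essential spectrum. Here the Weyl function characterization furnished by the boundary triple gives $\lambda\in(-m,m)\cap\rho(A_{\pm 2})$ if and only if the parameter $\Theta_{\pm 2}-\mathcal{C}_\lambda$, which on the relevant component acts essentially as $\pm\tfrac12 I + \mathcal{C}_\lambda$, is boundedly invertible in $H^{-1/2}(\Sigma;\mathbb{C}^4)$. Exhibiting a configuration (for instance $\Sigma$ a sphere, analyzed via spherical harmonics, as in \cite{DES89}) in which $\mp\tfrac12$ lies in the essential spectrum of $\mathcal{C}_\lambda$ on $H^{-1/2}$ for some $\lambda\in(-m,m)$ then yields the assertion.

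The hard part is the first step. At $\eta=\pm 2$ the natural $L^2(\Sigma;\mathbb{C}^4)$-based quasi boundary triple of \cite{BEHL16_2} degenerates, and a delicate analysis is required to upgrade the Plemelj relations and the range and kernel identities for $\Phi_\lambda$ and $\mathcal{C}_\lambda$ to the dual Sobolev scale in such a way that the formal boundary relation $\Theta_{\pm 2}$ is genuinely self-adjoint, rather than merely symmetric. The last step is also nontrivial because identifying the $H^{-1/2}$-spectrum of $\mathcal{C}_\lambda$ requires isolating its principal (singular) part from the lower-order compact contributions, and then matching that principal part against the critical value $\mp\tfrac12$.
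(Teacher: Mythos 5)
Your proposal captures the right \emph{framework} --- ordinary boundary triples with boundary operators on the dual Sobolev scale, plus singular sequences for the branches $(-\infty,-m]\cup[m,\infty)$ --- and that last part is essentially identical to what the paper does. But there are two substantive problems and one acknowledged gap.

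\textbf{The non-$H^1$ claim.} You propose to exhibit a $\varphi\in H^{-1/2}(\Sigma;\mathbb C^4)\setminus L^2(\Sigma;\mathbb C^4)$ lying in $\ker(I\pm 2\mathcal C_0)$ and then observe that $\Phi_0\varphi\notin H^1$. This fails in general: $\ker(I\pm 2\mathcal C_0)$ is precisely $\ker\overline{A_{\pm 2}}$ via the Birman--Schwinger correspondence, and there is no reason this kernel is nontrivial, let alone that it contains elements outside $L^2(\Sigma)$. For many surfaces $0\in\rho(\overline{A_{\pm 2}})$ and the kernel is $\{0\}$, yet the domain is still never contained in $H^1$. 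The paper's argument is cleaner and unconditional: it shows (Proposition~\ref{propi}) that $A_{\pm 2}=T\upharpoonright\ker(\Gamma_0\pm 2\Gamma_1)$ with domain inside $H^1(\mathbb R^3\setminus\Sigma;\mathbb C^4)$ is symmetric but \emph{not} self-adjoint, using compactness of $M(\lambda)^2-\tfrac14 I_4$ on $H^{1/2}(\Sigma;\mathbb C^4)$. Since $\overline{A_{\pm 2}}$ is self-adjoint, the closure strictly extends $A_{\pm 2}$, and therefore $\dom\overline{A_{\pm 2}}\not\subset H^1(\mathbb R^3\setminus\Sigma;\mathbb C^4)$. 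No explicit non-$L^2$ boundary datum is needed or available.

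\textbf{Essential spectrum in $(-m,m)$.} You propose a sphere and spherical harmonics as the example. The paper's example (Theorem~\ref{theorem_essential_spectrum}) requires $\Sigma$ to contain a \emph{flat} piece $\Sigma_0$, and the proof exploits exactly that: the kernel of the anticommutator $\mathcal A = M(0)\,\alpha\cdot\nu + \alpha\cdot\nu\, M(0)$ vanishes for arguments $x,z\in\Sigma_0$, so $\widetilde{\mathcal A}\varphi$ is locally $C^1$ on $\Sigma_1\Subset\Sigma_0$ regardless of how rough $\varphi$ is; this produces infinitely many functions not in $\ran\widetilde{\mathcal A}$, contradicting bounded invertibility of $\Theta_{\rm max}(\pm 2)$. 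For the sphere this mechanism is absent --- the sphere has nonzero curvature everywhere, so $K(x,z)$ does not vanish --- and it is not at all clear (and probably false) that $0\in\sigma_{\rm ess}(\overline{A_{\pm 2}})$ for the sphere. If you want to keep the spherical-harmonics route, you would have to actually compute the $H^{-1/2}$-essential spectrum of $\mathcal C_\lambda$ on the sphere and show $\mp\tfrac12$ lies in it; the known $L^2$-compactness of $M(\lambda)^2-\tfrac14 I_4$ does not transfer directly to $H^{-1/2}$.

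\textbf{Self-adjointness.} You correctly flag this as the hard step but leave it open. The essential analytic input the paper uses, following \cite{OV16}, is that the anticommutator $\mathcal A=M(0)\,\alpha\cdot\nu+\alpha\cdot\nu\,M(0)$ extends to a bounded map $H^{-1/2}(\Sigma;\mathbb C^4)\to H^{1/2}(\Sigma;\mathbb C^4)$ (a smoothing gain of one full Sobolev order). This is what makes $\widetilde M(0)^2-\tfrac14 I_4$ map $H^{-1/2}$ into $H^{1/2}$, which in turn is what lets one show that the transformed boundary parameter $\Theta_1(\pm 2)$ is essentially self-adjoint with closure $\Theta_{\rm max}(\pm 2)$ (Lemma~\ref{proposition_Theta_1}). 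Without this input, or an equivalent quantitative smoothing estimate for the Plemelj operators, your plan to ``upgrade the Plemelj relations to the dual Sobolev scale'' does not by itself establish that $\Theta_{\pm 2}$ is self-adjoint rather than merely symmetric.
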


In fact, it will turn out in 
Theorem~\ref{theorem_self_adjoint_bad} that the operator $A_{\pm 2}$ is essentially self-adjoint in $L^2(\mathbb{R}^3; \mathbb{C}^4)$
and hence, its closure is self-adjoint. Here $A_{\pm 2}$ is defined with the help of a suitable quasi boundary triple in a similar way as in \cite{BEHL16_2} 
on functions satisfying the jump condition \eqref{equation_jump_condition_intro}
in~$H^{1/2}(\Sigma; \mathbb{C}^4)$.
Our techniques, based on special 
transformations of quasi boundary triples to ordinary boundary triples and vice versa in the spirit of \cite{BM14}, 
allow us to give an explicit description of the domain of the self-adjoint operator 
$\overline{A_{\pm 2}}$. More precisely, we show that $f \in \dom S^*$ belongs to
the domain 
of the self-adjoint Dirac operator $\overline{A_{\pm 2}}$ with critical interaction strength 
if and only if the traces of $f$ satisfy the jump condition 
in~\eqref{equation_jump_condition_intro} in $H^{-1/2}(\Sigma; \mathbb{C}^4)$ 
and that $\dom \overline{A_{\pm 2}}$
is not contained in  $H^1(\mathbb{R}^3 \setminus \Sigma; \mathbb{C}^4)$.
Thus the functions in $\dom\overline{A_{\pm 2}}$ are less regular
than those in $\dom A_\eta$, $\eta\in\mathbb{R} \setminus \{ \pm 2\}$, which
indicates one of the key difficulties in the treatment of the critical interaction strengths $\pm 2$. 
We would like to point out that a result of the same type as Theorem~\ref{theorem_self_adjoint_bad} 
was obtained recently in \cite{OV16} by T.~Ourmi{\`e}res-Bonafos and L.~Vega. In the present paper
we also investigate the spectral properties of the 
self-adjoint operators $\overline{A_{\pm 2}}$. As one may expect the set $(-\infty,-m]\cup[m,\infty)$
belongs to the essential spectrum -- the proof of this fact
is based on the usage of suitable singular sequences -- but it is less intuitive that also in the interval $(-m,m)$ 
essential spectrum may appear. For the case that the interaction support $\Sigma$
contains a flat part we prove in Theorem~\ref{theorem_essential_spectrum} that the point~$0$
belongs to $\sigma_{\rm ess}(\overline{A_{\pm 2}})$ and at the same time it turns
out that in this situation the functions in $\dom {\overline{A_{\pm 2}}}$ do not possess any
Sobolev regularity of positive order.
We remark that a similar effect occurs in the study of indefinite Laplacians; 
cf.~\cite{BK17, CPP17}.

The paper is organized as follows: In Section~\ref{section_boundary_triples} we provide some 
statements from the theory of quasi and ordinary boundary triples that are needed to prove our main results. 
Section~\ref{section_free_Dirac} contains then some preliminary considerations on the free Dirac operator 
in $\mathbb{R}^3$ and a maximal Dirac operator in~$\mathbb{R}^3 \setminus \Sigma$, 
while in Section~\ref{section_boundary_triples_Dirac} boundary triples suitable for
Dirac operators with singular interactions are studied. 
Section~\ref{section_Dirac_delta} contains our main results: Theorem~\ref{theorem_self_adjoint_bad}, Theorem~\ref{proposition_properties_A_pm2}, and 
Theorem~\ref{theorem_essential_spectrum}.

\subsection*{Notations} \label{section_notations}
The positive constant $m$ stands for the mass of the particle. 
The identity matrix in $\mathbb{C}^{n \times n}$ is denoted by $I_n$.
Furthermore, $\alpha_1, \alpha_2, \alpha_3$ and $\beta$ are the Dirac matrices 
\begin{equation} \label{def_Dirac_matrices}
  \alpha_j := \begin{pmatrix} 0 & \sigma_j \\ \sigma_j & 0 \end{pmatrix} 
  \quad \text{and} \quad \beta := \begin{pmatrix} I_2 & 0 \\ 0 & -I_2 \end{pmatrix},
\end{equation}
where $\sigma_j$ are the Pauli spin matrices 
\begin{equation*}
  \sigma_1 := \begin{pmatrix} 0 & 1 \\ 1 & 0 \end{pmatrix}, \qquad
  \sigma_2 := \begin{pmatrix} 0 & -i \\ i & 0 \end{pmatrix}, \qquad
  \sigma_3 := \begin{pmatrix} 1 & 0 \\ 0 & -1 \end{pmatrix}.
\end{equation*}
The Dirac matrices satisfy the anti-commutation relations
\begin{equation} \label{equation_anti_commutation}
  \alpha_j \alpha_k + \alpha_k \alpha_j = 2 \delta_{j k} \quad \text{and} \quad \alpha_j \beta + \beta \alpha_j = 0,
  \quad j, k \in \{ 1, 2, 3 \}.
\end{equation}
For vectors $x = (x_1, x_2, x_3)^{\top}$ we employ the notation
$\alpha \cdot x := \sum_{j=1}^3 \alpha_j x_j$.

The open ball of radius $R$ centered at $x$ is denoted by $B(x,R)$. 
Moreover, $\Omega_+ \subset \mathbb{R}^3$ is a $C^2$-smooth bounded domain 
and we set $\Omega_- := \mathbb{R}^3 \setminus \overline{\Omega_+}$ and $\Sigma := \partial \Omega_+$.
For an open set $\Omega \subset \mathbb{R}^3$ we write 
$C^\infty_c(\Omega; \mathbb{C}^4)$ for the space of all infinitely many times differentiable vector valued functions 
with four components and compact support, and 
$C^\infty(\overline{\Omega}; \mathbb{C}^4) := \{ f\upharpoonright \Omega: f \in C^\infty_c(\mathbb{R}^3; \mathbb{C}^4) \}$.
In a similar way, if $\Omega$ is an open subset of $\mathbb{R}^3$ or if $\Omega = \Sigma$, then
$L^2(\Omega; \mathbb{C}^4)$ denotes the space of vector valued functions, where each of the four components 
is square integrable, and we write $(\cdot, \cdot)_\Omega$ for the corresponding inner product. If $\Omega = \Sigma$,
then these $L^2$-spaces are equipped with the Hausdorff measure $\sigma$, otherwise with the standard Lebesgue measure.
Eventually, we use the symbol $H^s(\Omega; \mathbb{C}^4)$ for Sobolev spaces of order $s \geq 0$ 
and $H^1_0(\Omega; \mathbb{C}^4)$ for the closure of $C_c^\infty(\Omega; \mathbb{C}^4)$ with respect
to the $H^1$-norm. For more details on Sobolev and other function spaces, see, e.g.,~\cite{M00}.

The Laplace-Beltrami operator on $\Sigma$ acting on $\mathbb{C}^4$-vector valued functions 
will be denoted by $-\Delta_\Sigma$. The operator 
$(I_4 - \Delta_\Sigma)^s: H^{2 s}(\Sigma; \mathbb{C}^4) \rightarrow L^2(\Sigma; \mathbb{C}^4)$ is bijective and continuous
for any $s \in [-1, 1]$. Finally, we are going to use the following expression for the duality product for the pair 
$H^{1/2}(\Sigma; \mathbb{C}^4)$ and its dual space $H^{-1/2}(\Sigma; \mathbb{C}^4)$:
\begin{equation*}
  (\varphi, \psi)_{1/2 \times -1/2} 
  := \big( (I_4 - \Delta_\Sigma)^{1/4} \varphi, (I_4 - \Delta_\Sigma)^{-1/4} \psi \big)_\Sigma
\end{equation*}
for $\varphi \in H^{1/2}(\Sigma; \mathbb{C}^4)$ and $\psi \in H^{-1/2}(\Sigma; \mathbb{C}^4)$.

\medskip
\noindent {\bf Acknowledgments.} 
The authors wish to thank the referee for helpful comments and remarks
that led to an improvement of the manuscript. Furthermore, 
the authors thank T.~Ourmi{\`e}res-Bonafos and L.~Vega for fruitful discussions.
J. Behrndt gratefully
acknowledges financial support
by the Austrian Science Fund (FWF): Project P~25162-N26. 


\section{Quasi and ordinary boundary triples} \label{section_boundary_triples}

In this section we give a short introduction to ordinary boundary triples, quasi boundary triples, and some related techniques 
in extension and spectral theory of symmetric and self-adjoint operators in Hilbert spaces. 
We formulate the results in a way such that they can be applied directly in the main part of the paper
in the analysis of Dirac operators with singular interactions. In order to get a detailed overview
of the concept of ordinary and quasi boundary triples and applications to partial 
differential operators we refer the reader to
\cite{BL07,BL12,BGP08,DM91,DM95,GG91}.

Throughout this section $\mathcal{H}$ is always a complex Hilbert space with inner product~$(\cdot, \cdot)_\mathcal{H}$
and~$S$ denotes a densely defined, closed and symmetric operator with adjoint~$S^*$.
We start with the definition of quasi and ordinary boundary triples.

\begin{definition}\label{qbtdef}
  Assume that $T$ is a linear operator in $\mathcal{H}$ such that $\overline{T} = S^*$. A triple 
  $\{ \mathcal{G}, \Gamma_0, \Gamma_1 \}$ consisting of
  a Hilbert space $\mathcal{G}$
  and linear mappings $\Gamma_0, \Gamma_1: \dom T \rightarrow \mathcal{G}$
  is called a {\rm quasi boundary triple} for $S^*$ if the following conditions hold:
\begin{itemize}
 \item [{\rm (i)}] For all $f,g\in\dom T$ the abstract Green's identity
 \begin{equation*}
  (Tf,g)_{\mathcal H}-(f,Tg)_{\mathcal H}=(\Gamma_1 f,\Gamma_0 g)_{\mathcal G}-(\Gamma_0 f,\Gamma_1 g)_{\mathcal G}
 \end{equation*}
 is satisfied.
 \item [{\rm (ii)}] $\Gamma=(\Gamma_0,\Gamma_1)^\top:\dom T\rightarrow \mathcal G\times\mathcal G$ has dense range.
 \item [{\rm (iii)}] $A_0:=T\upharpoonright\ker\Gamma_0$ is a self-adjoint operator in $\mathcal H$.
\end{itemize}
If {\rm (i)} and {\rm (iii)} hold, and the mapping $\Gamma=(\Gamma_0,\Gamma_1)^\top:\dom T\rightarrow \mathcal G\times\mathcal G$
is surjective then  
  $\{ \mathcal{G}, \Gamma_0, \Gamma_1 \}$ is called {\rm ordinary boundary triple}.
\end{definition}

We point out that the above (non-standard) definition of ordinary boundary triples is equivalent to the 
usual one in, e.g., \cite{BGP08,DM91,GG91},
see~\cite[Corollary~3.2]{BL07}. 
In particular, if $\{ \mathcal{G}, \Gamma_0, \Gamma_1 \}$ is an ordinary boundary triple,
then $T$ coincides with $S^*$.
Note that a quasi boundary triple or ordinary boundary triple for $S^*$ exists if and only if 
the defect numbers $\dim\ker(S^*\pm i)$ coincide, i.e. if and only if $S$ admits self-adjoint extensions in $\mathcal H$,
and that the operator $T$ in Definition~\ref{qbtdef} is in general 
not unique.

Let $T\subset \overline T=S^*$ and let $\{\mathcal G,\Gamma_0,\Gamma_1\}$ 
be a quasi boundary triple for $S^*$.
Then
\begin{equation*}
 S=T\upharpoonright\bigl(\ker\Gamma_0\cap\ker\Gamma_1\bigr)
\end{equation*}
and the mapping $\Gamma=(\Gamma_0,\Gamma_1)^\top:\dom T\rightarrow\mathcal G\times\mathcal G$ is closable;
cf.~\cite{BL07}. 
Next, we are going to introduce the $\gamma$-field and the Weyl function associated to the quasi 
boundary triple $\{ \mathcal{G}, \Gamma_0, \Gamma_1 \}$;
as we will see one can describe spectral properties of self-adjoint extensions of $S$ with the 
help of these operators.
In the following let $A_0 = T \upharpoonright \ker \Gamma_0$. Then the direct sum decomposition
\begin{equation} \label{decomposition}
  \dom T = \dom A_0 \dot{+} \ker(T - \lambda)=\ker\Gamma_0\dot{+} \ker(T - \lambda),\qquad\lambda\in\rho(A_0),
\end{equation}
holds.
The definition of the $\gamma$-field and Weyl function for quasi boundary triples
is in accordance with the one for ordinary boundary triples in \cite{DM91}.

\begin{definition}\label{gammdef}
 Let $T$ be a linear operator in $\mathcal{H}$ such that $\overline{T} = S^*$ and let 
 $\{ \mathcal{G}, \Gamma_0, \Gamma_1 \}$ be a quasi boundary triple for $S^*$.
 Then the corresponding $\gamma$-field $\gamma$ and Weyl function $M$ are defined by
 \begin{equation*}
  \rho(A_0) \ni \lambda\mapsto\gamma(\lambda)=\bigl(\Gamma_0\upharpoonright\ker(T-\lambda)\bigr)^{-1}
 \end{equation*}
and
 \begin{equation*}
  \rho(A_0) \ni \lambda\mapsto M(\lambda)=\Gamma_1 \bigl(\Gamma_0\upharpoonright\ker(T-\lambda)\bigr)^{-1},
 \end{equation*}
respectively.
\end{definition}

Because of \eqref{decomposition} the $\gamma$-field is well-defined and one has 
$\ran \gamma(\lambda) = \ker(T-\lambda)$ for any $\lambda \in \rho(A_0)$.
Note that $\dom \gamma(\lambda) = \ran \Gamma_0$ is dense in $\mathcal{G}$ 
by Definition~\ref{qbtdef}.
Making use of the abstract Green's formula (Definition~\ref{qbtdef}~(i))
one can show that
\begin{equation}\label{equation_gamma_star}
 \gamma(\lambda)^*=\Gamma_1 (A_0-\overline\lambda)^{-1}, \qquad \lambda \in \rho(A_0);
\end{equation}
this is a bounded and everywhere defined operator from $\mathcal H$ to $\mathcal G$. Thus
$\gamma(\lambda)$ is a (in general not everywhere defined) bounded operator; cf. 
\cite[Proposition~2.6]{BL07} or \cite[Proposition~6.13]{BL12}. In the special case that $\{ \mathcal{G}, \Gamma_0, \Gamma_1 \}$
is an ordinary boundary triple $\gamma(\lambda)$ is automatically bounded and everywhere defined.
Next, one has for all $\lambda, \mu \in \rho(A_0)$ and all $\varphi \in \ran \Gamma_0$ 
\begin{equation}\label{equation_diff_g}
 \gamma(\lambda)\varphi =\bigl(I+(\lambda-\mu)(A_0-\lambda)^{-1}\bigr)\gamma(\mu)\varphi,
\end{equation}
see \cite[Proposition~2.6]{BL07}.
In particular, the mapping $\lambda\mapsto\gamma(\lambda)\varphi$ 
is holomorphic on $\rho(A_0)$ for any fixed $\varphi \in \ran \Gamma_0$.

Next, we state some useful properties of the Weyl function $M$ corresponding to the quasi boundary triple 
$\{\mathcal G,\Gamma_0,\Gamma_1\}$; the proofs of these statements can be found in~\cite[Proposition~2.6]{BL07}.
The definition of $M$ implies  that
\begin{equation*}
 M(\lambda)\Gamma_0 f_\lambda=\Gamma_1 f_\lambda,\qquad f_\lambda\in\ker(T-\lambda), ~\lambda \in \rho(A_0).
\end{equation*}
In particular, for any $\lambda \in \rho(A_0)$ the linear operator $M(\lambda)$ is densely defined in
$\mathcal G$ with $\dom M(\lambda)=\ran\Gamma_0$ and
$\ran M(\lambda)\subset \ran\Gamma_1$.
For $\lambda,\mu\in\rho(A_0)$ and $\varphi\in\ran\Gamma_0$ one has
\begin{equation}\label{equation_diff_m}
 M(\lambda)\varphi-M(\mu)^*\varphi=(\lambda-\overline\mu)\gamma(\mu)^*\gamma(\lambda)\varphi.
\end{equation}
Therefore, we see that $M(\lambda)\subset M(\overline\lambda)^*$ for any $\lambda\in\rho(A_0)$ and hence
$M(\lambda)$ is a closable, but in general unbounded linear operator in 
$\mathcal G$. In the special case that $\{ \mathcal{G}, \Gamma_0, \Gamma_1 \}$
is an ordinary boundary triple 
$M(\lambda)$ is bounded and everywhere defined.
Equation~\eqref{equation_diff_m} also yields that
for any $\varphi\in \ran \Gamma_0$ the $\mathcal G$-valued function $\lambda\mapsto M(\lambda)\varphi$ 
is analytic on~$\rho(A_0)$.

In the main part of the paper we are going to use ordinary boundary triples, quasi boundary triples, 
and their Weyl functions to define and study self-adjoint extensions of the underlying symmetry $S$.
Let $T$ be a linear operator in $\mathcal{H}$ such that $\overline{T} = S^*$,
let $\{ \mathcal{G}, \Gamma_0, \Gamma_1 \}$ be a quasi boundary triple for $S^*$ and
let $\vartheta$ be a linear operator in $\mathcal{G}$. Then, we define the extension $A_\vartheta$ of $S$ by
\begin{equation}\label{equation_def_extension}
 A_\vartheta=T\upharpoonright \ker(\Gamma_1 - \vartheta \Gamma_0),
\end{equation}
i.e. $f\in\dom T$ belongs to $\dom A_\vartheta$ if and only if $f$ satisfies 
$\Gamma_1 f = \vartheta \Gamma_0 f$. 
If $\vartheta$ is a symmetric operator in $\mathcal{G}$ then Green's identity implies 
\begin{equation}\label{abab}
 (A_\vartheta f,g)_{\mathcal H}-(f,A_\vartheta g)_{\mathcal H}=
 (\vartheta \Gamma_0 f, \Gamma_0 g)_{\mathcal G} - (\Gamma_0 f, \vartheta \Gamma_0 g)_{\mathcal G}=0
\end{equation}
for all $f, g \in \dom A_\vartheta$ and hence the extension $A_\vartheta$ is symmetric in $\mathcal H$.

In the following theorem we state an abstract version of the Birman-Schwinger principle and a 
Krein type resolvent formula for canonical extensions $A_\vartheta$; 
for the proof of this result, see~\cite[Theorem~2.8]{BL07} or \cite[Theorem~6.16]{BL12}.

\begin{thm} \label{theorem_Krein_abstract}
Let $T$ be a linear operator in $\mathcal H$ such that $\overline{T} = S^*$, let $\{\mathcal G,\Gamma_0,\Gamma_1\}$
be a quasi boundary triple for $S^*$ with $A_0=T\upharpoonright\ker\Gamma_0$, and denote the corresponding $\gamma$-field and Weyl function 
by $\gamma$ and $M$, respectively.
Let $A_\vartheta$ be the canonical extension of $S$ associated to an operator $\vartheta$ in $\mathcal{G}$
as in \eqref{equation_def_extension}.
Then the following assertions hold for all $\lambda\in\rho(A_0)$:
\begin{itemize}
 \item [\rm (i)] $\lambda \in \sigma_{\mathrm{p}}(A_\vartheta)$ if and only if 
 $0 \in \sigma_{\mathrm{p}}(\vartheta - M(\lambda))$. Moreover, it holds that
 \begin{equation*}
   \ker(A_\vartheta-\lambda)=\bigl\{\gamma(\lambda)\varphi:\varphi\in\ker(\vartheta - M(\lambda))\bigr\}.
 \end{equation*}
 \item [{\rm (ii)}] If $\lambda \notin \sigma_{\mathrm{p}}(A_\vartheta)$ then 
 $g\in\ran(A_\vartheta - \lambda)$ if and only if $\gamma(\overline\lambda)^*g\in\ran(\vartheta - M(\lambda))$.
 \item [{\rm (iii)}] If $\lambda \notin \sigma_{\mathrm{p}}(A_\vartheta)$ then 
 \begin{equation*}
  (A_\vartheta - \lambda)^{-1}g=(A_0-\lambda)^{-1}g
      + \gamma(\lambda)\bigl(\vartheta - M(\lambda)\bigr)^{-1}\gamma(\overline\lambda)^*g
 \end{equation*}
 holds for all $g\in\ran(A_\vartheta-\lambda)$.
\end{itemize}
\end{thm}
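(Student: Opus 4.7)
The plan is to exploit the direct sum decomposition
\[
  \dom T = \ker \Gamma_0 \dot{+} \ker(T-\lambda), \qquad \lambda \in \rho(A_0),
\]
from \eqref{decomposition} and to read off the boundary values of each summand. On $\ker\Gamma_0 = \dom A_0$ the operator $\Gamma_0$ vanishes and, using \eqref{equation_gamma_star}, $\Gamma_1 (A_0-\lambda)^{-1} = \gamma(\overline\lambda)^*$. On $\ker(T-\lambda)$ the map $\Gamma_0$ is by construction a bijection onto $\ran\Gamma_0 = \dom\gamma(\lambda)$ with inverse $\gamma(\lambda)$, and by the very definition of the Weyl function $\Gamma_1 f_\lambda = M(\lambda)\Gamma_0 f_\lambda$ for any $f_\lambda \in \ker(T-\lambda)$. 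Every $f\in\dom T$ thus admits a unique splitting $f = f_0 + f_\lambda$, and
\[
  \Gamma_0 f = \Gamma_0 f_\lambda, \qquad \Gamma_1 f = \Gamma_1 f_0 + M(\lambda)\Gamma_0 f_\lambda.
\]

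For assertion (i) I would take $f \in \ker(A_\vartheta-\lambda)$; then $(T-\lambda)f=0$ forces $f_0=0$ in the decomposition, so $f=\gamma(\lambda)\varphi$ with $\varphi=\Gamma_0 f$. The boundary condition $\Gamma_1 f = \vartheta\Gamma_0 f$ becomes $M(\lambda)\varphi = \vartheta\varphi$, i.e.\ $\varphi\in\ker(\vartheta-M(\lambda))$. The converse is immediate: for $\varphi\in\ker(\vartheta-M(\lambda))\subset\ran\Gamma_0$, the vector $\gamma(\lambda)\varphi$ lies in $\ker(T-\lambda)$ and satisfies the boundary condition, hence belongs to $\ker(A_\vartheta-\lambda)$. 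This simultaneously yields the description of the eigenspace.

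For (ii), if $g=(A_\vartheta-\lambda)f$ with $f=f_0+f_\lambda$, then $g=(A_0-\lambda)f_0$, whence $f_0=(A_0-\lambda)^{-1}g$ and $\Gamma_1 f_0 = \gamma(\overline\lambda)^* g$ by \eqref{equation_gamma_star}. The abstract boundary condition $\Gamma_1 f = \vartheta\Gamma_0 f$ then rewrites as
\[
  (\vartheta - M(\lambda))\Gamma_0 f_\lambda = \gamma(\overline\lambda)^* g,
\]
which shows one implication. For the converse pick $\varphi\in\ran\Gamma_0$ with $(\vartheta-M(\lambda))\varphi=\gamma(\overline\lambda)^*g$ and set $f:=(A_0-\lambda)^{-1}g+\gamma(\lambda)\varphi$; a direct check using $\Gamma_0\gamma(\lambda)=\mathrm{id}$ on $\ran\Gamma_0$ and $\Gamma_1\gamma(\lambda)=M(\lambda)$ verifies that $f\in\dom A_\vartheta$ and $(A_\vartheta-\lambda)f=g$. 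Assertion (iii) now drops out: under $\lambda\notin\sigma_{\mathrm p}(A_\vartheta)$ the vector $\varphi$ is unique by (i), so $\varphi=(\vartheta-M(\lambda))^{-1}\gamma(\overline\lambda)^* g$ on $\ran(A_\vartheta-\lambda)$ and inserting this into the formula for $f$ yields the claimed Krein formula.

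The main subtlety, as opposed to the ordinary boundary triple case treated in \cite{DM91}, is that for a quasi boundary triple neither $\gamma(\lambda)$ nor $M(\lambda)$ is everywhere defined on $\mathcal{G}$: one has $\dom\gamma(\lambda)=\dom M(\lambda)=\ran\Gamma_0$ and $\ran M(\lambda)\subset\ran\Gamma_1$. I would therefore be careful throughout to verify that every element to which $\gamma(\lambda)$, $M(\lambda)$, or $(\vartheta-M(\lambda))^{-1}$ is applied indeed lies in the appropriate range space; this is automatic above because the quantities in question arise as $\Gamma_0 f$ for $f\in\dom T$. Beyond this bookkeeping the argument is purely algebraic, and no completeness or closure argument is needed.
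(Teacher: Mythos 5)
Your proof is correct and follows the standard argument; the paper itself does not prove this theorem but cites \cite[Theorem~2.8]{BL07} and \cite[Theorem~6.16]{BL12}, whose proofs proceed in essentially the same way, via the decomposition $\dom T=\ker\Gamma_0\dot{+}\ker(T-\lambda)$, the identity $\gamma(\overline\lambda)^*=\Gamma_1(A_0-\lambda)^{-1}$, and the relation $\Gamma_1=M(\lambda)\Gamma_0$ on $\ker(T-\lambda)$. Your closing remark about domain bookkeeping for $\gamma(\lambda)$, $M(\lambda)$ and $\vartheta$ is exactly the point where the quasi boundary triple setting differs from the ordinary one, and you handle it correctly.
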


Assertion (ii) of the previous theorem shows  how the self-adjointness of an extension $A_\vartheta$
can be proven. Namely, if $\vartheta$ is symmetric in $\mathcal G$ then $A_\vartheta$ is symmetric in $\mathcal H$ by \eqref{abab}, 
and hence $A_\vartheta$ is self-adjoint if, in addition, $\ran (A_\vartheta \mp i) = \mathcal{H}$. According to Theorem~\ref{theorem_Krein_abstract}~(ii)
the latter is equivalent to
$\ran \gamma(\mp i)^* \subset \ran(\vartheta - M(\pm i))$.

In the special case that $\{ \mathcal{G}, \Gamma_0, \Gamma_1 \}$ is an ordinary boundary triple the situation is simpler
as the next well-known proposition states. We note that the converse in Proposition~\ref{proposition_self_adjoint_abstract}
holds if $\vartheta$ in \eqref{equation_def_extension} is allowed to be a linear relation (multivalued operator).

\begin{prop} \label{proposition_self_adjoint_abstract}
Let $S$ be a densely defined closed symmetric operator in $\mathcal H$ and assume that $\{\mathcal G,\Gamma_0,\Gamma_1\}$
is an ordinary boundary triple for $S^*$. Let $\vartheta$ be an operator in $\mathcal{G}$ and let
$A_\vartheta$ be defined by \eqref{equation_def_extension}. If $\vartheta$ is (essentially) self-adjoint in $\mathcal{G}$
then $A_\vartheta$ is (essentially) self-adjoint in $\mathcal H$.
\end{prop}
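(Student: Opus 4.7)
My plan is to prove the general identity
\begin{equation*}
 A_\vartheta^*=A_{\vartheta^*}:=S^*\upharpoonright\ker(\Gamma_1-\vartheta^*\Gamma_0),
\end{equation*}
valid for any operator $\vartheta$ in $\mathcal G$ whenever $\{\mathcal G,\Gamma_0,\Gamma_1\}$ is an ordinary boundary triple, and then to read off both claims of the proposition as immediate consequences. Since for an ordinary boundary triple one has $T=S^*$, the extension $A_\vartheta$ is a restriction of $S^*$ that contains the symmetry $S$, so $A_\vartheta$ is densely defined and $A_\vartheta^*\subset S^*$.

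For the inclusion $A_\vartheta^*\subset A_{\vartheta^*}$, I would take $g\in\dom A_\vartheta^*$, note that then $A_\vartheta^*g=S^*g$, and combine $(A_\vartheta f,g)_{\mathcal H}=(f,A_\vartheta^* g)_{\mathcal H}$ with Green's identity and the boundary condition $\Gamma_1 f=\vartheta\Gamma_0 f$ to obtain
\begin{equation*}
 (\vartheta\Gamma_0 f,\Gamma_0 g)_{\mathcal G}=(\Gamma_0 f,\Gamma_1 g)_{\mathcal G},\qquad f\in\dom A_\vartheta.
\end{equation*}
At this point the surjectivity of $\Gamma=(\Gamma_0,\Gamma_1)^\top\colon\dom S^*\to\mathcal G\times\mathcal G$ enters: for any prescribed $\varphi\in\dom\vartheta$ one can choose $f\in\dom S^*$ with $\Gamma_0 f=\varphi$ and $\Gamma_1 f=\vartheta\varphi$, and any such $f$ lies in $\dom A_\vartheta$. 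Letting $\varphi$ range over $\dom\vartheta$ yields $(\vartheta\varphi,\Gamma_0 g)_{\mathcal G}=(\varphi,\Gamma_1 g)_{\mathcal G}$ for every $\varphi\in\dom\vartheta$, which by the definition of the adjoint means $\Gamma_0 g\in\dom\vartheta^*$ and $\vartheta^*\Gamma_0 g=\Gamma_1 g$; hence $g\in\dom A_{\vartheta^*}$. The reverse inclusion $A_{\vartheta^*}\subset A_\vartheta^*$ follows directly from Green's identity applied to $f\in\dom A_\vartheta$ and $g\in\dom A_{\vartheta^*}$, since the boundary term $(\vartheta\Gamma_0 f,\Gamma_0 g)_{\mathcal G}-(\Gamma_0 f,\vartheta^*\Gamma_0 g)_{\mathcal G}$ vanishes by the very definition of $\vartheta^*$.

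The proposition is then immediate: if $\vartheta=\vartheta^*$ is self-adjoint, then $A_\vartheta^*=A_{\vartheta^*}=A_\vartheta$, so $A_\vartheta$ is self-adjoint; and if $\vartheta$ is essentially self-adjoint, then $\vartheta^*=\overline\vartheta$ is self-adjoint, and the self-adjoint case applied to $\overline\vartheta$ yields that $A_\vartheta^*=A_{\overline\vartheta}$ is self-adjoint, so the densely defined symmetric operator $A_\vartheta$ has self-adjoint adjoint and is therefore essentially self-adjoint with $\overline{A_\vartheta}=A_{\overline\vartheta}$. The main obstacle is the first inclusion $A_\vartheta^*\subset A_{\vartheta^*}$: one must realise \emph{every} $\varphi\in\dom\vartheta$ as $\Gamma_0 f$ for some $f\in\dom A_\vartheta$, and this is precisely where the surjectivity — rather than the mere density of range available for a general quasi boundary triple — of $(\Gamma_0,\Gamma_1)^\top$ is indispensable.
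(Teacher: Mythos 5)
The paper states Proposition~\ref{proposition_self_adjoint_abstract} without proof, treating it as a well-known fact from the theory of ordinary boundary triples (as indeed it is; see, e.g., \cite{GG91} or \cite{DM91}). Your argument is precisely the standard proof of the underlying identity $A_\vartheta^*=A_{\vartheta^*}$, and it is correct and complete. In particular you correctly isolate where the surjectivity of $(\Gamma_0,\Gamma_1)^\top$ is needed: the reverse inclusion $A_{\vartheta^*}\subset A_\vartheta^*$ follows from Green's identity alone, but to show $A_\vartheta^*\subset A_{\vartheta^*}$ you must realise every $\varphi\in\dom\vartheta$ as $\Gamma_0 f$ with $\Gamma_1 f=\vartheta\varphi$ for some $f\in\dom A_\vartheta$, and it is exactly here that the ordinary boundary triple hypothesis (as opposed to the quasi boundary triple setting, where only density of range is available) is used. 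The deduction of the essentially self-adjoint case from $\vartheta^*=\overline\vartheta$ and $A_\vartheta^{**}=A_{\vartheta^*}^*=A_{\overline\vartheta}$ is also sound. In short: correct proof, standard approach, and the paper's own treatment offers no proof to compare against because it simply cites the result as classical.
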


In what follows we describe a particular transformation procedure of quasi boundary triples to ordinary boundary triples from \cite{BM14}
which will be useful later in  this paper.
Let $T$ be a linear operator such that $\overline{T} = S^*$ and let $\{ \mathcal{G}, \Gamma_0, \Gamma_1 \}$
be a quasi boundary triple for $S^*$.
Define the spaces
\begin{equation} \label{def_G_0_G_1}
  \mathscr{G}_0 := \ran(\Gamma_0 \upharpoonright \ker \Gamma_1) \quad \text{and} \quad
  \mathscr{G}_1 := \ran(\Gamma_1 \upharpoonright \ker \Gamma_0).
\end{equation}
We will often assume that $\mathscr{G}_1$ is dense in $\mathcal{G}$.
In this case, we denote by $\mathscr{G}_1'$ the dual space of $\mathscr{G}_1$ with respect to 
any norm $\| \cdot \|_{\mathscr{G}_1}$ such that $\big(\mathscr{G}_1, \| \cdot \|_{\mathscr{G}_1}\big)$ is 
a reflexive Banach space continuously embedded into $\mathcal{G}$; such a norm exists, see \cite[Proposition~2.9]{BM14},
and all norms with this property are equivalent, cf. \cite[Proposition~2.10]{BM14}.
Analogous statements hold if $\mathscr{G}_0$ is dense in $\mathcal{G}$ and $T \upharpoonright \ker \Gamma_1$
is self-adjoint, and hence we can employ a similar notation in this case as well.

First, it turns out that the boundary mapping $\Gamma_0$ or $\Gamma_1$ can be extended to $\dom S^*$,
if the set $\mathscr{G}_1$ or $\mathscr{G}_0$, respectively, is dense in $\mathcal{G}$;
cf. \cite[Proposition~2.10 and Corollary~2.11]{BM14}. 
In the following we write $\| \cdot \|_{S^*}$ for the graph norm induced by $S^*$.

\begin{prop} \label{proposition_extension_boundary_mappings_abstract}
  Let $T$ be a linear operator such that $\overline{T} = S^*$, let $\{ \mathcal{G}, \Gamma_0, \Gamma_1 \}$
  be a quasi boundary triple for $S^*$, and let $\mathscr G_0,\mathscr{G}_1$ be as in \eqref{def_G_0_G_1}. Then the following assertions are true:
  \begin{itemize}
    \item[\rm (i)] If $\mathscr{G}_1$ is dense in 
    $\mathcal{G}$ then $\Gamma_0$ admits a unique, surjective and continuous extension
    \begin{equation*}
      \widetilde{\Gamma}_0: \big(\dom S^*, \| \cdot \|_{S^*}\big) \rightarrow \mathscr{G}_1'.
    \end{equation*}
    
    \item[\rm (ii)] If $\mathscr{G}_0$ is dense in 
    $\mathcal{G}$ and $A_\infty := T \upharpoonright \ker \Gamma_1$ is self-adjoint 
    then $\Gamma_1$ admits a unique, surjective and continuous extension
    \begin{equation*}
      \widetilde{\Gamma}_1: \big(\dom S^*, \| \cdot \|_{S^*}\big) \rightarrow \mathscr{G}_0'.
    \end{equation*}
  \end{itemize}
\end{prop}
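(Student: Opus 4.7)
The plan is to extend $\Gamma_0$ (for (i)) and $\Gamma_1$ (for (ii)) by continuity from the core $\dom T$ to all of $\dom S^*$, using a duality estimate extracted from Green's identity. I sketch (i); part (ii) is completely symmetric with $A_\infty = T\upharpoonright\ker\Gamma_1$ playing the role of $A_0$.

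First I would equip $\mathscr{G}_1 = \Gamma_1(\dom A_0)$ with the quotient norm inherited from $(\dom A_0, \|\cdot\|_{A_0})$. That $\Gamma_1\upharpoonright \dom A_0$ is continuous in the graph norm follows from the identity $\gamma(\lambda)^* = \Gamma_1(A_0-\overline\lambda)^{-1}$ in \eqref{equation_gamma_star}, whose right-hand side is everywhere defined and bounded on $\mathcal H$. The kernel of $\Gamma_1\upharpoonright \dom A_0$ is therefore closed in $\dom A_0$, so $\mathscr{G}_1$ becomes a Hilbert space, hence a reflexive Banach space, continuously embedded into $\mathcal{G}$; this is one admissible norm in the sense of \cite[Proposition~2.10]{BM14}, and all other admissible norms are equivalent to it.

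Next I would derive the key estimate. For $f \in \dom T$ and $g \in \dom A_0 = \ker\Gamma_0$, Green's identity together with $\Gamma_0 g = 0$ gives
\begin{equation*}
  (\Gamma_0 f, \Gamma_1 g)_{\mathcal{G}} = (f, A_0 g)_{\mathcal{H}} - (Tf, g)_{\mathcal{H}},
\end{equation*}
so $|(\Gamma_0 f, \Gamma_1 g)_{\mathcal{G}}| \leq \sqrt{2}\,\|f\|_{S^*}\|g\|_{A_0}$. Taking the infimum over all $g \in \dom A_0$ with $\Gamma_1 g = \psi$ yields $|(\Gamma_0 f, \psi)_{\mathcal{G}}| \leq \sqrt{2}\,\|f\|_{S^*}\|\psi\|_{\mathscr{G}_1}$. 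Since $\mathscr{G}_1$ is dense in $\mathcal{G}$, the canonical map $\mathcal{G}\hookrightarrow \mathscr{G}_1'$ is injective, and so $\Gamma_0 f$ defines an element of $\mathscr{G}_1'$ with $\|\Gamma_0 f\|_{\mathscr{G}_1'}\leq \sqrt{2}\,\|f\|_{S^*}$. Because $\overline T = S^*$, the subspace $\dom T$ is dense in $(\dom S^*, \|\cdot\|_{S^*})$, and the bounded linear transformation theorem produces the desired unique continuous extension $\widetilde\Gamma_0:(\dom S^*,\|\cdot\|_{S^*})\to\mathscr{G}_1'$.

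The main obstacle I expect is surjectivity. My plan is to argue by duality: the quotient map yields an isometric isomorphism $(\dom A_0)/\ker(\Gamma_1\upharpoonright \dom A_0) \cong \mathscr{G}_1$, so each $\phi\in\mathscr{G}_1'$ lifts to a bounded linear functional $\widetilde\phi$ on the Hilbert space $(\dom A_0, \|\cdot\|_{A_0})$ annihilating $\ker(\Gamma_1\upharpoonright \dom A_0)$. Representing $\widetilde\phi$ via Riesz and reversing the Green's identity computation above should produce some $f\in\mathcal{H}$ satisfying $\widetilde\phi(g) = (f, A_0 g)_{\mathcal{H}} - (h, g)_{\mathcal{H}}$ with $h = S^*f$, i.e. $\widetilde\Gamma_0 f = \phi$. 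The delicate point is to verify that the resulting $f$ lies in $\dom S^*$; this is where the self-adjointness of $A_0$ enters, allowing one to solve $(A_0 - \lambda) u = \cdot$ and absorb the annihilator condition, essentially along the lines of \cite[Corollary~2.11]{BM14}. Uniqueness of the extension is automatic from continuity and the density of $\dom T$.
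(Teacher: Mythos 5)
Your proposal is correct and follows the same route as the paper, which for this statement simply refers to \cite[Proposition~2.10 and Corollary~2.11]{BM14} rather than giving an independent proof. The point you flagged as delicate — that the $f$ produced in the surjectivity step actually lies in $\dom S^*$ — resolves exactly as you anticipate: the Riesz representative $g_0\in\dom A_0$ of the pulled-back functional $\widetilde\phi$ inherits the annihilator condition on $\ker(\Gamma_1\upharpoonright\dom A_0)=\dom S$, which reads $(A_0 g_0, Sw)_{\mathcal H}=-(g_0,w)_{\mathcal H}$ for all $w\in\dom S$ and hence forces $f:=A_0 g_0\in\dom S^*$ with $S^*f=-g_0$; substituting into the continuously extended Green's identity $(\widetilde\Gamma_0 f,\Gamma_1 g)_{\mathscr{G}_1'\times\mathscr{G}_1}=(f,A_0 g)_{\mathcal H}-(S^*f,g)_{\mathcal H}$ for $g\in\dom A_0$ and using $\Gamma_1(\dom A_0)=\mathscr{G}_1$ then gives $\widetilde\Gamma_0 f=\phi$.
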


Under the assumptions of the previous proposition also the $\gamma$-field and the Weyl function 
associated to the quasi boundary triple $\{ \mathcal{G}, \Gamma_0, \Gamma_1\}$ can be extended;
cf.~\cite[Definition~2.14]{BM14} and the corresponding discussion.

\begin{prop} \label{proposition_extension_gamma_Weyl_abstract}
  Let $T$ be a linear operator such that $\overline{T} = S^*$, let $\{ \mathcal{G}, \Gamma_0, \Gamma_1 \}$
  be a quasi boundary triple for $S^*$, and let $A_0=T\upharpoonright\ker\Gamma_0$. Denote the corresponding $\gamma$-field and Weyl function 
by $\gamma$ and $M$, respectively.
  Assume that $\mathscr{G}_0$ and $\mathscr{G}_1$ defined by 
  \eqref{def_G_0_G_1} are dense in $\mathcal{G}$ and that $A_\infty := T \upharpoonright \ker \Gamma_1$
  is self-adjoint.  Then the following assertions hold for all $\lambda\in\rho(A_0)$:
  \begin{itemize}
    \item[\rm (i)] The values of the $\gamma$-field admit continuous extensions
    \begin{equation*}
      \widetilde{\gamma}(\lambda) = \big( \widetilde{\Gamma}_0 \upharpoonright \ker(S^* - \lambda) \big)^{-1}: 
      \mathscr{G}_1' \rightarrow \mathcal{H}.
    \end{equation*}
    
    \item[\rm (ii)] The values of the Weyl function $M(\lambda)$ admit continuous extensions
    \begin{equation*}
      \widetilde{M}(\lambda) = \widetilde{\Gamma}_1 \widetilde{\gamma}(\lambda): 
      \mathscr{G}_1' \rightarrow \mathscr{G}_0'.
    \end{equation*}
  \end{itemize}
\end{prop}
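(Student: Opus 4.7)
The plan is to derive both extensions from Proposition~\ref{proposition_extension_boundary_mappings_abstract} together with the von~Neumann type decomposition
\begin{equation*}
\dom S^{*} = \dom A_{0} \,\dot{+}\, \ker(S^{*}-\lambda), \qquad \lambda\in\rho(A_{0}),
\end{equation*}
which is valid because $A_{0}$ is a self-adjoint extension of $S$ with $\lambda\in\rho(A_{0})$. Since $\mathscr{G}_{1}$ is dense in $\mathcal{G}$, the space $\mathcal{G}$ embeds continuously into $\mathscr{G}_{1}'$, so $\ran\Gamma_{0}\subset \mathcal{G}$ is viewed as a subset of $\mathscr{G}_{1}'$ throughout.

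For assertion (i) I would first use Proposition~\ref{proposition_extension_boundary_mappings_abstract}~(i) to obtain the continuous surjection $\widetilde{\Gamma}_{0}:(\dom S^{*},\|\cdot\|_{S^{*}})\to \mathscr{G}_{1}'$, and observe that $\ker\widetilde{\Gamma}_{0}=\dom A_{0}$: the inclusion $\supset$ follows by closability since $\dom A_{0}$ is the $\|\cdot\|_{S^{*}}$-closure of $\ker\Gamma_{0}$ (as $A_{0}$ is self-adjoint, hence closed), and the reverse inclusion follows from the direct sum decomposition above by comparing dimensions modulo $\ker(S^{*}-\lambda)$. Combined with the decomposition, this shows that the restriction $\widetilde{\Gamma}_{0}\upharpoonright\ker(S^{*}-\lambda)$ is a bijection onto $\mathscr{G}_{1}'$. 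Because $\ker(S^{*}-\lambda)$ is closed in $(\dom S^{*},\|\cdot\|_{S^{*}})$ the bounded inverse theorem yields a continuous inverse into $(\ker(S^{*}-\lambda),\|\cdot\|_{S^{*}})$, and on $\ker(S^{*}-\lambda)$ one has $\|f\|_{S^{*}}^{2}=(1+|\lambda|^{2})\|f\|_{\mathcal{H}}^{2}$, so the inverse is continuous as a map $\mathscr{G}_{1}'\to\mathcal{H}$. This is the desired $\widetilde{\gamma}(\lambda)$; that it extends $\gamma(\lambda)$ is immediate, since for $\varphi\in\ran\Gamma_{0}$ the element $\gamma(\lambda)\varphi\in\ker(T-\lambda)\subset \ker(S^{*}-\lambda)$ satisfies $\widetilde{\Gamma}_{0}\gamma(\lambda)\varphi=\Gamma_{0}\gamma(\lambda)\varphi=\varphi$.

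For assertion (ii) I would apply Proposition~\ref{proposition_extension_boundary_mappings_abstract}~(ii) to obtain the continuous extension $\widetilde{\Gamma}_{1}:(\dom S^{*},\|\cdot\|_{S^{*}})\to \mathscr{G}_{0}'$, which is available because $\mathscr{G}_{0}$ is dense in $\mathcal{G}$ and $A_{\infty}$ is self-adjoint. The composition $\widetilde{M}(\lambda):=\widetilde{\Gamma}_{1}\widetilde{\gamma}(\lambda)$ is well-defined and continuous: although $\widetilde{\gamma}(\lambda)$ was produced with target $\mathcal{H}$, its range lies in $\ker(S^{*}-\lambda)$, where the graph and Hilbert norms are equivalent, so $\widetilde{\gamma}(\lambda):\mathscr{G}_{1}'\to(\dom S^{*},\|\cdot\|_{S^{*}})$ is also continuous. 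That $\widetilde{M}(\lambda)$ extends $M(\lambda)$ then follows directly from $\widetilde{\Gamma}_{1}\upharpoonright\dom T=\Gamma_{1}$ together with the extension property of $\widetilde{\gamma}(\lambda)$ from (i), yielding $\widetilde{M}(\lambda)\varphi=\Gamma_{1}\gamma(\lambda)\varphi=M(\lambda)\varphi$ for every $\varphi\in\ran\Gamma_{0}$.

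The main obstacle I anticipate is the clean identification $\ker\widetilde{\Gamma}_{0}=\dom A_{0}$ and the analogous verification that $\widetilde{\Gamma}_{0}\upharpoonright\ker(S^{*}-\lambda)$ inherits surjectivity from $\widetilde{\Gamma}_{0}$; once the direct sum decomposition is combined with the continuity and uniqueness of the extension in Proposition~\ref{proposition_extension_boundary_mappings_abstract} the argument becomes a matter of bookkeeping, but these compatibility checks are where the non-trivial input of the hypotheses (density of $\mathscr{G}_{0},\mathscr{G}_{1}$ and self-adjointness of $A_{\infty}$) truly enters.
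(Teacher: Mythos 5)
The overall architecture you propose is the right one and matches the approach in \cite{BM14}, which the paper cites for this result: extend $\Gamma_0$ and $\Gamma_1$ via Proposition~\ref{proposition_extension_boundary_mappings_abstract}, use the decomposition $\dom S^* = \dom A_0 \dot{+} \ker(S^*-\lambda)$, show $\widetilde{\Gamma}_0\upharpoonright\ker(S^*-\lambda)$ is a bijection onto $\mathscr{G}_1'$, invoke the bounded inverse theorem together with the equivalence of the graph norm and the $\mathcal{H}$-norm on $\ker(S^*-\lambda)$, and then compose with $\widetilde{\Gamma}_1$ for the Weyl function. Part (ii) and the verification that these operators extend $\gamma(\lambda)$ and $M(\lambda)$ are fine.

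However, the crucial step --- the identity $\ker\widetilde{\Gamma}_0 = \dom A_0$, or equivalently the injectivity of $\widetilde{\Gamma}_0$ on $\ker(S^*-\lambda)$ --- is not established by your argument. The inclusion $\dom A_0 \subset \ker\widetilde{\Gamma}_0$ is clear (and does not need the closability remark: $\dom A_0 = \ker\Gamma_0 \subset \dom T$, so $\widetilde{\Gamma}_0$ literally equals $\Gamma_0=0$ there). But the reverse inclusion does not follow from ``comparing dimensions modulo $\ker(S^*-\lambda)$'': both $\ker(S^*-\lambda)$ and $\mathscr{G}_1'$ are in general infinite-dimensional, and surjectivity of a bounded map between Banach spaces gives no information about its kernel. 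What one actually needs here is the extended Green's identity for the pair $(\Gamma_1,\widetilde{\Gamma}_0)$: for $f \in \dom A_0 = \ker\Gamma_0$ and $g \in \dom S^*$ one has, by approximating $g$ in the graph norm by elements of $\dom T$ and using continuity of $\widetilde{\Gamma}_0$ together with $\Gamma_1 f \in \mathscr{G}_1$,
\begin{equation*}
(A_0 f, g)_{\mathcal{H}} - (f, S^* g)_{\mathcal{H}} = (\Gamma_1 f, \widetilde{\Gamma}_0 g)_{\mathscr{G}_1 \times \mathscr{G}_1'}.
\end{equation*}
If now $g \in \ker(S^*-\lambda) \cap \ker\widetilde{\Gamma}_0$, the right-hand side vanishes for all $f \in \dom A_0$, so $((A_0-\overline{\lambda})f, g)_{\mathcal{H}} = 0$ for all $f \in \dom A_0$; since $\overline{\lambda}\in\rho(A_0)$ gives $\ran(A_0-\overline{\lambda})=\mathcal{H}$, one concludes $g=0$. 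Without this (or an equivalent substitute, such as first establishing the ordinary boundary triple of Theorem~\ref{theorem_extended_boundary_triple_abstract} and reading off $\ker\Upsilon_0=\dom A_0$, the route taken in \cite{BM14}), the injectivity claim remains unproved, and with it the very definition of $\widetilde{\gamma}(\lambda)$ as an inverse. You correctly flagged this as the locus where the hypotheses do real work, but that is precisely the step that cannot be waved through by bookkeeping.
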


Making use of the extended boundary mapping $\widetilde{\Gamma}_0$ one can transform the originally
given quasi boundary triple $\{ \mathcal{G}, \Gamma_0, \Gamma_1 \}$ to an ordinary boundary triple,
see~\cite[Theorem~2.12]{BM14}.
In order to introduce this ordinary boundary triple fix some isomorphisms
$\iota_+: \mathscr{G}_1 \rightarrow \mathcal{G}$ and $\iota_-: \mathscr{G}_1' \rightarrow \mathcal{G}$
which satisfy
\begin{equation*}
  (\iota_- x', \iota_+ x)_{\mathcal{G}} = (x', x)_{\mathscr{G}_1' \times \mathscr{G}_1}
\end{equation*}
for all $x \in \mathscr{G}_1$ and $x' \in \mathscr{G}_1'$, where 
$(\cdot, \cdot)_{\mathscr{G}_1' \times \mathscr{G}_1}$ denotes the duality product of the pair
$\mathscr{G}_1'$ and $\mathscr{G}_1$.

\begin{thm} \label{theorem_extended_boundary_triple_abstract}
  Let $T$ be a linear operator such that $\overline{T} = S^*$ and let $\{ \mathcal{G}, \Gamma_0, \Gamma_1 \}$
  be a quasi boundary triple for $S^*$ with $A_0 = T \upharpoonright \ker \Gamma_0$. Assume that $\mathscr{G}_1$ defined by 
  \eqref{def_G_0_G_1} is dense in $\mathcal{G}$ and that there exists $\mu \in \rho(A_0) \cap \mathbb{R}$.
  Define the mappings $\Upsilon_0, \Upsilon_1: \dom S^* \rightarrow \mathcal{G}$ by
  \begin{equation*}
    \Upsilon_0 f = \iota_- \widetilde{\Gamma}_0 f, \quad \Upsilon_1 f = \iota_+ \Gamma_1 f_0, \quad
    f = f_0 + f_\mu \in \dom A_0 \dot{+} \ker(S^* - \mu),
  \end{equation*}
  where $\widetilde{\Gamma}_0$ is the extension of the boundary
  mapping $\Gamma_0$ from Proposition~\ref{proposition_extension_boundary_mappings_abstract}~{\rm(i)}.
  Then $\{\mathcal{G}, \Upsilon_0, \Upsilon_1 \}$ is an ordinary boundary triple for $S^*$ such that the self-adjoint operators $T\upharpoonright\ker\Gamma_0$ and
  $S^*\upharpoonright\ker\Upsilon_0$ coincide.
\end{thm}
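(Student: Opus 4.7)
The plan is to verify the three defining properties of an ordinary boundary triple for $S^*$ listed in Definition~\ref{qbtdef}: the abstract Green identity on $\dom S^*$, the surjectivity of $\Upsilon=(\Upsilon_0,\Upsilon_1)^\top\colon\dom S^*\to\mathcal{G}\times\mathcal{G}$, and the self-adjointness of $S^*\upharpoonright\ker\Upsilon_0$. The last property will come for free once one checks that $\ker\Upsilon_0=\dom A_0$, which is precisely the coincidence asserted at the end of the theorem.

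I would first settle the well-definedness and the kernel. Since $\mathscr{G}_1$ is dense in $\mathcal{G}$, Proposition~\ref{proposition_extension_boundary_mappings_abstract}(i) provides the continuous surjection $\widetilde{\Gamma}_0\colon\dom S^*\to\mathscr{G}_1'$, hence $\Upsilon_0=\iota_-\widetilde{\Gamma}_0$ maps into $\mathcal{G}$. The assumption $\mu\in\rho(A_0)\cap\mathbb{R}$ supplies the unique direct sum $\dom S^*=\dom A_0\dot{+}\ker(S^*-\mu)$, so the component $f_0\in\dom A_0=\ker\Gamma_0$ is well defined, $\Gamma_1 f_0\in\mathscr{G}_1$, and $\Upsilon_1 f=\iota_+\Gamma_1 f_0\in\mathcal{G}$. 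Because $\dom A_0\subset\ker\widetilde{\Gamma}_0$ while $\widetilde{\Gamma}_0$ is surjective, the splitting forces $\widetilde{\Gamma}_0\upharpoonright\ker(S^*-\mu)\colon\ker(S^*-\mu)\to\mathscr{G}_1'$ to be a bijection and $\ker\widetilde{\Gamma}_0=\dom A_0$; therefore $\ker\Upsilon_0=\dom A_0$ and $S^*\upharpoonright\ker\Upsilon_0=A_0=T\upharpoonright\ker\Gamma_0$.

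The heart of the argument is the Green identity. Writing $f=f_0+f_\mu$, $g=g_0+g_\mu$ and using $S^* f=A_0 f_0+\mu f_\mu$, I would expand $(S^* f,g)_{\mathcal{H}}-(f,S^* g)_{\mathcal{H}}$: the four diagonal contributions vanish by self-adjointness of $A_0$ and by $\mu\in\mathbb{R}$, leaving the two cross terms $((A_0-\mu)f_0,g_\mu)_{\mathcal{H}}$ and $-(f_\mu,(A_0-\mu)g_0)_{\mathcal{H}}$. These are rewritten via the auxiliary identity
\begin{equation*}
(S^* h,u)_{\mathcal{H}}-(h,A_0 u)_{\mathcal{H}}=-(\widetilde{\Gamma}_0 h,\Gamma_1 u)_{\mathscr{G}_1'\times\mathscr{G}_1},\qquad h\in\dom S^*,\ u\in\dom A_0,
\end{equation*}
which for $h\in\dom T$ is just Green's identity for $\Gamma_0,\Gamma_1$ collapsed using $\Gamma_0 u=0$, and is then extended by continuity, using the density of $\dom T$ in $(\dom S^*,\|\cdot\|_{S^*})$ provided by $\overline T=S^*$ together with the continuity of $\widetilde{\Gamma}_0$. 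Applying this identity to $(h,u)=(f_\mu,g_0)$ and, after complex conjugation, to $(h,u)=(g_\mu,f_0)$, and then using the defining property of $\iota_\pm$ to turn the $\mathscr{G}_1'\times\mathscr{G}_1$-pairing into an inner product on $\mathcal{G}$, yields exactly $(\Upsilon_1 f,\Upsilon_0 g)_{\mathcal{G}}-(\Upsilon_0 f,\Upsilon_1 g)_{\mathcal{G}}$.

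To finish, for surjectivity I would take $(a,b)\in\mathcal{G}\times\mathcal{G}$ and set $a':=\iota_-^{-1}a\in\mathscr{G}_1'$ and $b':=\iota_+^{-1}b\in\mathscr{G}_1$. The bijection $\widetilde{\Gamma}_0\upharpoonright\ker(S^*-\mu)\to\mathscr{G}_1'$ produces $f_\mu\in\ker(S^*-\mu)$ with $\widetilde{\Gamma}_0 f_\mu=a'$, and the definition of $\mathscr{G}_1$ produces $f_0\in\ker\Gamma_0$ with $\Gamma_1 f_0=b'$; then $f=f_0+f_\mu$ satisfies $\Upsilon_0 f=a$ and $\Upsilon_1 f=b$. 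The main obstacle in this plan is the careful extension of Green's identity from $\dom T$ to $\dom S^*$: one has to swap the $\mathcal{G}$-inner product at the boundary for the $\mathscr{G}_1'\times\mathscr{G}_1$-duality pairing and keep track of the sesquilinearity conventions through the isomorphisms $\iota_\pm$, all justified by the continuity of $\widetilde{\Gamma}_0$ from Proposition~\ref{proposition_extension_boundary_mappings_abstract} and the density of $\dom T$ in $\dom S^*$ in the graph norm.
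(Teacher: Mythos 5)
Your overall plan is sound and, in fact, matches the strategy the paper uses to prove the concrete instantiation in Theorem~\ref{theorem_ordinary_triple} (for the abstract statement itself the paper simply cites \cite[Theorem~2.12]{BM14}): verify Green's identity, verify surjectivity of $(\Upsilon_0,\Upsilon_1)^\top$, and identify $S^*\upharpoonright\ker\Upsilon_0$ with $A_0$. The computation of the cross terms $\bigl((A_0-\mu)f_0,g_\mu\bigr)-\bigl(f_\mu,(A_0-\mu)g_0\bigr)$, the auxiliary ``second Green identity''
\begin{equation*}
  (S^* h,u)_{\mathcal{H}}-(h,A_0 u)_{\mathcal{H}}=-(\widetilde{\Gamma}_0 h,\Gamma_1 u)_{\mathscr{G}_1'\times\mathscr{G}_1},\qquad h\in\dom S^*,\ u\in\dom A_0,
\end{equation*}
obtained from the quasi boundary triple Green identity on $\dom T$ and extended by density of $\dom T$ in $(\dom S^*,\|\cdot\|_{S^*})$ and continuity of $\widetilde{\Gamma}_0$, and the surjectivity construction via $\iota_\pm^{-1}$ are all correct; the paper's Theorem~\ref{theorem_ordinary_triple} proves the same facts for the Dirac setting by the same decomposition and density argument.

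There is one step you assert too quickly. You write that because $\dom A_0\subset\ker\widetilde{\Gamma}_0$ and $\widetilde{\Gamma}_0$ is surjective, the direct sum decomposition ``forces'' $\widetilde{\Gamma}_0\upharpoonright\ker(S^*-\mu)$ to be a \emph{bijection} onto $\mathscr{G}_1'$ and $\ker\widetilde{\Gamma}_0=\dom A_0$. Surjectivity of the restriction does follow from these facts, but injectivity does not; in an infinite-dimensional setting there is no dimension count to close the gap, and the hypotheses of Proposition~\ref{proposition_extension_gamma_Weyl} (which does assert the bijection) are stronger than those of the present theorem. The injectivity does hold, but it needs an argument, and your own auxiliary identity supplies a clean one: if $f_\mu\in\ker(S^*-\mu)$ satisfies $\widetilde{\Gamma}_0 f_\mu=0$, then for every $u\in\dom A_0$ one has $0=(S^*f_\mu,u)-(f_\mu,A_0u)=\mu(f_\mu,u)-(f_\mu,A_0u)=\bigl(f_\mu,(\mu-A_0)u\bigr)$, and since $\mu\in\rho(A_0)$ gives $\ran(\mu-A_0)=\mathcal{H}$, this forces $f_\mu=0$. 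Reordering so that the auxiliary identity is established before the discussion of $\ker\widetilde{\Gamma}_0$ would make the proof complete.
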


We remark that the $\gamma$-field $\beta$ and the Weyl function $\mathcal{M}$ associated to the boundary triple
$\{ \mathcal{G}, \Upsilon_0, \Upsilon_1\}$ are given by 
\begin{equation} \label{gamma_Weyl_transformed_triple}
  \beta(\lambda) = \widetilde{\gamma}(\lambda) \iota_-^{-1}\quad \text{and} \quad
  \mathcal{M}(\lambda) = \iota_+ \big( \widetilde{M}(\lambda) - \widetilde{M}(\mu) \big) \iota_-^{-1}
\end{equation}
for $\lambda \in \rho(A_0)$ and $\mu \in \mathbb{R} \cap \rho(A_0)$ chosen as in 
Theorem~\ref{theorem_extended_boundary_triple_abstract}; cf.~\cite[eq.~(2.17)]{BM14}.

Finally, let $\vartheta$ be a linear operator in $\mathcal{G}$ and let $A_\vartheta$ be the canonical extension of~$S$ 
defined via \eqref{equation_def_extension} and the quasi boundary triple $\{ \mathcal{G}, \Gamma_0, \Gamma_1 \}$.
Consider the linear operator 
\begin{equation} \label{transformed_parameter} 
\begin{split}
  \Theta(\vartheta) &= \iota_+ \big(\vartheta - M(\mu) \big) \iota_-^{-1},\\
  \dom\Theta(\vartheta)&=\bigl\{\varphi\in\mathcal G:\iota_-^{-1}\varphi\in\dom(\vartheta-M(\mu))\,\,\text{and}\,\,(\vartheta - M(\mu)) \iota_-^{-1}\varphi\in\mathscr G_1\bigr\},
\end{split}
  \end{equation}
in $\mathcal{G}$. If $\{ \mathcal{G}, \Upsilon_0, \Upsilon_1\}$ is the ordinary 
boundary triple in Theorem~\ref{theorem_extended_boundary_triple_abstract} then one verifies 
\begin{equation*}
  \ker (\Gamma_1 - \vartheta \Gamma_0) = \ker \bigl(\Upsilon_1 - \Theta(\vartheta) \Upsilon_0\bigr);
\end{equation*} 
cf. \cite[Corollary~3.5]{BM14}.
Together with Proposition~\ref{proposition_self_adjoint_abstract} the next corollary
follows immediately; again a converse statement is true if $\vartheta$ and $\Theta(\vartheta)$ 
are allowed to be linear relations.

\begin{cor} \label{corollary_self_adjoint_transformed_triple}
  Let $\vartheta$, $\Theta(\vartheta)$ and $A_\vartheta$ be as above and assume that the assumptions in 
  Theorem~\ref{theorem_extended_boundary_triple_abstract} are satisfied. 
  If $\Theta(\vartheta)$ is (essentially) self-adjoint in $\mathcal{G}$
  then the operator 
  $A_\vartheta$ is (essentially) self-adjoint
  in $\mathcal{H}$.
\end{cor}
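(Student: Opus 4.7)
The plan is to reduce the statement to Proposition~\ref{proposition_self_adjoint_abstract} by passing from the quasi boundary triple $\{\mathcal{G},\Gamma_0,\Gamma_1\}$ to the ordinary boundary triple $\{\mathcal{G},\Upsilon_0,\Upsilon_1\}$ constructed in Theorem~\ref{theorem_extended_boundary_triple_abstract}, and then to use the identification of the boundary conditions recorded just before the statement.

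First I would check that the hypotheses of Theorem~\ref{theorem_extended_boundary_triple_abstract} are in force, so that $\{\mathcal{G},\Upsilon_0,\Upsilon_1\}$ is indeed an ordinary boundary triple for $S^*$. This provides us with a second parametrization of the self-adjoint extensions of $S$, this time in the rigid framework of ordinary boundary triples, where Proposition~\ref{proposition_self_adjoint_abstract} applies directly. In particular, the canonical extension associated to a parameter $\Theta$ in $\mathcal{G}$ via this ordinary boundary triple is
\begin{equation*}
  B_\Theta := S^* \upharpoonright \ker\bigl(\Upsilon_1 - \Theta\,\Upsilon_0\bigr).
\end{equation*}

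Next I would invoke the identity
\begin{equation*}
  \ker(\Gamma_1 - \vartheta \Gamma_0) = \ker\bigl(\Upsilon_1 - \Theta(\vartheta)\,\Upsilon_0\bigr)
\end{equation*}
recalled from \cite[Corollary~3.5]{BM14}. Since $T \subset S^*$ and both $A_\vartheta$ and $B_{\Theta(\vartheta)}$ act as the restriction of $S^*$ to a common domain determined by this kernel, it follows that $A_\vartheta = B_{\Theta(\vartheta)}$ as linear operators in $\mathcal{H}$. Thus the self-adjointness analysis of $A_\vartheta$ reduces to that of $B_{\Theta(\vartheta)}$.

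Finally, I would apply Proposition~\ref{proposition_self_adjoint_abstract} to the ordinary boundary triple $\{\mathcal{G},\Upsilon_0,\Upsilon_1\}$ with boundary parameter $\Theta(\vartheta)$: if $\Theta(\vartheta)$ is (essentially) self-adjoint in $\mathcal{G}$, then $B_{\Theta(\vartheta)}$ is (essentially) self-adjoint in $\mathcal{H}$, and hence so is $A_\vartheta$. I do not expect any serious obstacle; the only point requiring minor care is the "essentially self-adjoint" variant, but this is already built into the formulation of Proposition~\ref{proposition_self_adjoint_abstract}, so the transfer is automatic once the operator identity $A_\vartheta = B_{\Theta(\vartheta)}$ has been established.
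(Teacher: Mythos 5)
Your proposal is correct and follows exactly the paper's own route: the identity $\ker(\Gamma_1-\vartheta\Gamma_0)=\ker\bigl(\Upsilon_1-\Theta(\vartheta)\Upsilon_0\bigr)$ from \cite[Corollary~3.5]{BM14} identifies $A_\vartheta$ with the extension parametrized by $\Theta(\vartheta)$ in the ordinary boundary triple $\{\mathcal{G},\Upsilon_0,\Upsilon_1\}$, and Proposition~\ref{proposition_self_adjoint_abstract} then gives the conclusion. The paper states this as an immediate consequence of the same two ingredients, so there is nothing to add.
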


In this context we also note that
for some self-adjoint operator $\Theta$ acting in $\mathcal{G}$ and its corresponding extension
$A_\Theta = S^*\upharpoonright \ker(\Upsilon_1 - \Theta \Upsilon_0)$ one has
\begin{align} 
  \label{Birman_Schwinger1} 
  \lambda \in \sigma(A_\Theta) \cap \rho(A_0) \quad &\text{if and only if}\quad
  0 \in \sigma(\Theta - \mathcal{M}(\lambda)),\\
  \label{Birman_Schwinger_p} 
  \lambda \in \sigma_{\rm p}(A_\Theta) \cap \rho(A_0) \quad &\text{if and only if}\quad
  0 \in \sigma_{\rm p}(\Theta - \mathcal{M}(\lambda)),
\end{align}
and
\begin{equation} \label{Birman_Schwinger_disc}
  \lambda \in \sigma_{\rm disc}(A_\Theta) \cap \rho(A_0)\quad \text{if and only if}\quad
  0 \in \sigma_{\rm disc}(\Theta - \mathcal{M}(\lambda));
\end{equation} 
cf.~\cite{DM91,DM95} and~\cite[Theorem~1.29 and Theorem~3.3]{BGP08}. Moreover, for $\lambda \in \rho(A_0)\cap\rho(A_\Theta)$
we have
\begin{equation} \label{Krein_transformed}
  (A_\Theta - \lambda)^{-1}=(A_0-\lambda)^{-1}
      + \beta(\lambda)\bigl(\Theta - \mathcal{M}(\lambda)\bigr)^{-1}\beta(\overline\lambda)^*;
\end{equation}
see~\cite{DM91,DM95} and \cite[Section~3]{BM14} for more details.

\section{The free and the maximal Dirac operator} 
\label{section_free_Dirac}

In this section we first recall the definition and some standard properties of the free Dirac operator,
which will be of importance in our further considerations. Then we introduce and discuss the maximal Dirac 
operator in $\mathbb{R}^3 \setminus \Sigma$, where $\Sigma$ is the boundary of a bounded $C^2$-domain.

Let us choose units such that the speed of light and the Planck constant $\hbar$ are both equal to one.
Then, the free Dirac operator is given by
\begin{equation} \label{def_free_Dirac}
  A_0 f := -i \sum_{j=1}^3 \alpha_j \partial_j f + m \beta f=-i\alpha\cdot\nabla f+m\beta f, 
  \quad \dom A_0 = H^1(\mathbb{R}^3; \mathbb{C}^4),
\end{equation}
where the Dirac matrices $\alpha_1, \alpha_2, \alpha_3$ and $\beta$ are defined by \eqref{def_Dirac_matrices} 
and we require $m>0$.
If $-\Delta$ denotes the self-adjoint Laplace operator in $L^2(\mathbb{R}^3; \mathbb{C})$ 
defined on $H^2(\mathbb{R}^3; \mathbb{C})$ then 
\begin{equation} \label{equation_A_0_square}
  A_0^2  = (-\Delta + m^2) I_4, \qquad \dom A_0^2 = H^2(\mathbb{R}^3; \mathbb{C}^4);
\end{equation}
cf. \cite[Korollar~20.2]{W03} for $m=1$. 
In the above formula the symbol $(-\Delta + m^2) I_4$ is understood as a $4 \times 4$ diagonal block operator, 
where each diagonal entry acts as $-\Delta + m^2$.
Next, it is well-known that $A_0$ is self-adjoint in $L^2(\mathbb{R}^3; \mathbb{C}^4)$ and that 
the spectrum of $A_0$ is
\begin{equation} \label{equation_spectrum_A_0}
  \sigma(A_0) = (-\infty, -m] \cup [m, \infty),
\end{equation}
see \cite{T92} or \cite[Chapter~20]{W03}. 
Furthermore, for $\lambda \notin \sigma(A_0)$ the resolvent of $A_0$ 
is given by
\begin{equation} \label{equation_resolvent_A_0}
(A_0 - \lambda)^{-1} f(x) = \int_{\mathbb{R}^3} G_\lambda(x - y) f(y) \mathrm{d} y, 
\quad x \in \mathbb{R}^3,~f \in L^2(\mathbb{R}^3; \mathbb{C}^4),
\end{equation}
where the integral kernel $G_\lambda$ is a $\mathbb C^{4\times 4}$-valued function of the form
\begin{equation} \label{def_G_lambda}
	G_\lambda(x) = \left( \lambda I_4 + m \beta 
	+ \left( 1 - i \sqrt{\lambda^2 - m^2} |x| \right) \frac{i(\alpha \cdot x )}{|x|^2} \right)
	\frac{e^{i \sqrt{\lambda^2 - m^2} |x|}}{4 \pi |x|};
\end{equation}
cf.~\cite[Section~1.E]{T92} or \cite[Lemma~2.1]{AMV15}. In the above formula the square root is defined such that 
$\mathrm{Im}\, \sqrt{\lambda^2 - m^2} > 0$ for $\lambda\not\in\sigma(A_0)$.

Let $\Sigma$ be the boundary of the bounded $C^2$-domain $\Omega_+$ and let 
$\Omega_- := \mathbb{R}^3 \setminus \overline{\Omega_+}$.
We will make  use of the decomposition
$L^2(\mathbb{R}^3; \mathbb{C}^4) = L^2(\Omega_+; \mathbb{C}^4) \oplus L^2(\Omega_-; \mathbb{C}^4)$ and 
split functions $f \in L^2(\mathbb{R}^3; \mathbb{C}^4)$ in the form 
$f = f_+ \oplus f_-$, where $f_\pm := f \upharpoonright \Omega_\pm \in L^2(\Omega_\pm; \mathbb{C}^4)$.
Furthermore, we define the subspaces $\mathcal{D}_\pm$ of $L^2(\Omega_\pm; \mathbb{C}^4)$ by
\begin{equation*}
  \mathcal{D}_\pm := \left \{ f_\pm \in L^2(\Omega_\pm; \mathbb{C}^4):
                        (-i \alpha \cdot \nabla + m \beta) f_\pm \in L^2(\Omega_\pm; \mathbb{C}^4) \right\},
\end{equation*}
where all derivatives are understood in the distributional sense, and we endow $\mathcal D_\pm$ with the natural norms
\begin{equation}\label{dpm}
 \| f_\pm \|_{\mathcal{D}_\pm}^2:=\Vert f_\pm\Vert_{\Omega_\pm}^2
 +\bigl\Vert (-i \alpha \cdot \nabla + m \beta) f_\pm\bigr\Vert_{\Omega_\pm}^2,\quad
 f_\pm\in\mathcal D_\pm.
\end{equation}

Now, we define the maximal Dirac operator $T_{\rm max}$ in $L^2(\mathbb{R}^3; \mathbb{C}^4)$ by
\begin{equation} \label{def_T_m}
  \begin{split}
    T_{\rm max} f &:= (-i \alpha \cdot \nabla + m \beta) f_+ \oplus (-i \alpha \cdot \nabla + m \beta) f_-, \\
    \dom T_{\rm max} &:= \mathcal{D}_+ \oplus \mathcal{D}_-.
  \end{split}
\end{equation}
The operator $T_{\rm{max}}$ turns out to be the adjoint of the symmetric restriction of $A_0$ on functions vanishing on $\Sigma$. 

\begin{prop} \label{corollary_S_star}
  Define the linear operator $S$ by
  \begin{equation} \label{sss}
    S := A_0 \upharpoonright H^1_0(\mathbb{R}^3 \setminus \Sigma; \mathbb{C}^4)
  \end{equation}
  and let $T_{\rm max}$ be as above. Then $S$ is a densely defined, closed, symmetric operator  such that $S^* = T_{\rm max}$ holds.
\end{prop}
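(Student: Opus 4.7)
The plan is to verify the three assertions (densely defined, closed, symmetric) about $S$ directly and then prove the identity $S^* = T_{\rm max}$ by the usual two inclusions via distributional integration by parts.

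First, density and symmetry are essentially immediate. Since $\Sigma$ has three-dimensional Lebesgue measure zero, $C_c^\infty(\mathbb{R}^3 \setminus \Sigma; \mathbb{C}^4) \subset H^1_0(\mathbb{R}^3 \setminus \Sigma; \mathbb{C}^4) = \dom S$ is dense in $L^2(\mathbb{R}^3; \mathbb{C}^4)$, so $S$ is densely defined. Moreover $S$ is a restriction of the self-adjoint operator $A_0$, hence $S \subset A_0 = A_0^* \subset S^*$, proving that $S$ is symmetric. For closedness I would use \eqref{equation_A_0_square}, which shows that on $H^1(\mathbb{R}^3;\mathbb C^4)$ the graph norm of $A_0$ is equivalent to the $H^1$-norm. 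Hence if $f_n \in \dom S$ with $f_n \to f$ and $A_0 f_n \to g$ in $L^2$, then $(f_n)$ is Cauchy in the $H^1$-norm, so $f_n \to f$ in $H^1(\mathbb{R}^3; \mathbb{C}^4)$. Since $H^1_0(\mathbb{R}^3 \setminus \Sigma; \mathbb{C}^4)$ is closed in $H^1$-norm by its definition as a closure, $f \in \dom S$ and $Sf = g$.

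Next, the inclusion $T_{\rm max} \subset S^*$. Since $\mathbb{R}^3 \setminus \Sigma = \Omega_+ \cup \Omega_-$ is a disjoint union of open sets, every $f \in C_c^\infty(\mathbb{R}^3 \setminus \Sigma; \mathbb{C}^4)$ splits as $f = f_+ \oplus f_-$ with $f_\pm \in C_c^\infty(\Omega_\pm; \mathbb{C}^4)$. For any $g = g_+ \oplus g_- \in \dom T_{\rm max} = \mathcal{D}_+ \oplus \mathcal{D}_-$ the definition of the distributional action of $-i \alpha \cdot \nabla + m \beta$ on $g_\pm$ against the test function $f_\pm$ yields
\begin{equation*}
  (S f, g)_{\mathbb{R}^3}
  = \sum_{\pm} \bigl( (-i \alpha \cdot \nabla + m \beta) f_\pm, g_\pm \bigr)_{\Omega_\pm}
  = \sum_{\pm} \bigl( f_\pm, (-i \alpha \cdot \nabla + m \beta) g_\pm \bigr)_{\Omega_\pm}
  = (f, T_{\rm max} g)_{\mathbb{R}^3}.
\end{equation*}
Approximating an arbitrary $f \in H^1_0(\mathbb{R}^3 \setminus \Sigma; \mathbb{C}^4)$ in the $H^1$-norm by such test functions, and using the continuity of $A_0$ from $H^1$ to $L^2$, the identity extends by density to all of $\dom S$. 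Hence $g \in \dom S^*$ and $S^* g = T_{\rm max} g$.

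Finally, for $S^* \subset T_{\rm max}$, take $g \in \dom S^*$ and set $h := S^* g \in L^2(\mathbb{R}^3; \mathbb{C}^4)$. Testing against an arbitrary $f \in C_c^\infty(\Omega_+; \mathbb{C}^4) \subset \dom S$ (so that $f_- = 0$) gives
\begin{equation*}
  \bigl( (-i \alpha \cdot \nabla + m \beta) f, g_+ \bigr)_{\Omega_+} = (S f, g)_{\mathbb{R}^3} = (f, h)_{\mathbb{R}^3} = (f, h_+)_{\Omega_+},
\end{equation*}
which by definition of distributional derivatives means $(-i \alpha \cdot \nabla + m \beta) g_+ = h_+$ as an identity in $\mathcal{D}'(\Omega_+)$; since $h_+ \in L^2(\Omega_+; \mathbb{C}^4)$ we obtain $g_+ \in \mathcal{D}_+$. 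The analogous argument on $\Omega_-$ gives $g_- \in \mathcal{D}_-$, so $g \in \dom T_{\rm max}$ with $T_{\rm max} g = h = S^* g$. There is no genuine obstacle here; the only point that deserves care is the separation of the support of $C_c^\infty(\mathbb{R}^3 \setminus \Sigma; \mathbb{C}^4)$ into the two components $\Omega_\pm$, which is what makes the boundary term on $\Sigma$ vanish without requiring any trace information on $g$.
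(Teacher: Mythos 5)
Your proof is correct and follows essentially the same route as the paper: density and symmetry from $S\subset A_0$, closedness from the equivalence of the $A_0$-graph norm and the $H^1$-norm together with the fact that $H^1_0(\mathbb{R}^3\setminus\Sigma;\mathbb{C}^4)$ is $H^1$-closed, and the two inclusions $T_{\rm max}\subset S^*$ and $S^*\subset T_{\rm max}$ by testing against $C_c^\infty(\Omega_\pm;\mathbb{C}^4)$ and a density argument exploiting that $\mathbb{R}^3\setminus\Sigma$ is the disjoint union of $\Omega_+$ and $\Omega_-$. The only cosmetic difference is that you justify the norm equivalence via \eqref{equation_A_0_square} whereas the paper cites \cite[Satz~20.1]{W03}; both are fine.
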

\begin{proof}
  Since $C_c^\infty(\mathbb{R}^3 \setminus \Sigma; \mathbb{C}^4)\subset\dom S$ and $S\subset A_0$ it is clear that $S$ is densely defined and symmetric.
  Moreover $S$ is closed since $H^1_0(\mathbb{R}^3 \setminus \Sigma; \mathbb{C}^4)$ is a closed subspace of 
  $H^1(\mathbb{R}^3; \mathbb{C}^4)$
  and the graph norm of $A_0$ is equivalent 
  to the $H^1(\mathbb{R}^3; \mathbb{C}^4)$-norm, see, e.g., \cite[Satz~20.1]{W03}.

  Next we show $S^* \subset T_{\rm{max}}$. For that, let $f \in \dom S^*$ and 
  $g_+ \in C_c^\infty(\Omega_+; \mathbb{C}^4)$. Then $g := g_+ \oplus 0 \in \dom S$ and 
  \begin{equation*} 
    \big((S^* f)_+, g_+\big)_{\Omega_+} = (S^* f, g)_{\mathbb{R}^3} 
    = (f, S g)_{\mathbb{R}^3} = \big( f_+, (-i \alpha \cdot \nabla + m \beta) g_+ \big)_{\Omega_+}.
  \end{equation*}
  Since this holds for any $g_+ \in C_c^\infty(\Omega_+; \mathbb{C}^4)$, the distribution
  $(-i \alpha \cdot \nabla + m \beta) f_+$ exists in $L^2(\Omega_+; \mathbb{C}^4)$ and is equal to $(S^* f)_+$.
  Similarly, one verifies $(-i \alpha \cdot \nabla + m \beta) f_- = (S^* f)_-$ in the distributional sense.
  This yields $f \in \mathcal{D}_+ \oplus \mathcal{D}_- = \dom T_{\rm{max}}$ and 
  $T_{\rm{max}} f = S^* f$.
  
  It remains to prove that $T_{\rm{max}} \subset S^*$. Let $f \in \dom T_{\rm{max}}$ 
  and $g \in C_c^\infty(\mathbb{R}^3 \setminus \Sigma; \mathbb{C}^4)$. Then we have
  \begin{equation*}
    \begin{split}
      \big(f_+, (S g)_+\big)_{\Omega_+} &= \big( f_+, (-i \alpha \cdot \nabla + m \beta) g_+ \big)_{\Omega_+} \\
      &= \big( (-i \alpha \cdot \nabla + m \beta) f_+, g_+ \big)_{\Omega_+} = \big( (T_{\rm{max}} f)_+, g_+ \big)_{\Omega_+}
    \end{split}
  \end{equation*}
  and similarly $(f_-, (S g)_-)_{\Omega_-} = \big( (T_{\rm{max}} f)_-, g_- \big)_{\Omega_-}$.
  Summing up these two equations yields
  \begin{equation*}
    (f, S g)_{\mathbb{R}^3} = (T_{\rm{max}} f, g)_{\mathbb{R}^3}.
  \end{equation*}
  A density argument shows that this remains valid for any 
  $g \in H^1_0(\mathbb{R}^3 \setminus \Sigma; \mathbb{C}^4) = \dom S$
  and hence $f \in \dom S^*$ and $S^* f = T_{\rm{max}} f$.
  This completes the proof of this proposition.
\end{proof}

The next lemma implies that smooth functions are dense in $\dom T_{\rm max}$ equipped with the graph norm.
The proof follows the strategy in \cite[Lemma~2.1]{BFSB17_1}; a similar result can also be found 
in~\cite[Proposition~2.12]{OV16}.

\begin{lem} \label{lemma_core_T}
  The space $C^\infty(\overline{\Omega_\pm}; \mathbb{C}^4)$ is dense in 
  $\mathcal{D}_\pm$ with respect to the norm $\| \cdot \|_{\mathcal{D}_\pm}$ in \eqref{dpm}.
\end{lem}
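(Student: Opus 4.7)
The plan is to establish the density by a Friedrichs-type localization and mollification argument of the kind used in \cite{BFSB17_1} for the same operator, split according to whether a local piece of $f_\pm$ is supported in the interior, near the boundary $\Sigma$, or (in the case of $\Omega_-$) near infinity. Two algebraic observations will be used throughout. First, the constant-coefficient Dirac expression $-i\alpha\cdot\nabla + m\beta$ commutes with translations and with convolution against a scalar mollifier. Second, for any $\varphi \in C_c^\infty(\mathbb{R}^3)$ multiplication by $\varphi$ preserves $\mathcal{D}_\pm$, since by the Leibniz rule
\begin{equation*}
  (-i\alpha\cdot\nabla + m\beta)(\varphi f_\pm) = \varphi(-i\alpha\cdot\nabla + m\beta) f_\pm - i(\alpha\cdot\nabla\varphi) f_\pm \in L^2(\Omega_\pm;\mathbb{C}^4).
\end{equation*}

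I would first reduce to compactly supported functions. For $\Omega_+$ this is automatic, while for $\Omega_-$ I would cut off at infinity by $\eta_R f_-$ with $\eta_R \in C_c^\infty(\mathbb{R}^3)$ equal to $1$ on a large ball containing $\overline{\Omega_+}$; the identity above combined with $\|\nabla\eta_R\|_\infty \to 0$ and dominated convergence yields $\eta_R f_- \to f_-$ in $\|\cdot\|_{\mathcal{D}_-}$. Next, using the $C^2$-regularity of $\Sigma$ and compactness, I would cover $\Sigma$ by finitely many small open balls $U_1,\dots,U_N$ so that on each $U_j$ there exists a unit vector $v_j^+$ pointing into $\Omega_+$ with the uniform interior cone property
\begin{equation*}
  (\overline{U_j} \cap \overline{\Omega_+}) + t v_j^+ \subset \Omega_+ \quad \text{for all sufficiently small } t > 0,
\end{equation*}
and the analogous property with $v_j^- := -v_j^+$ for $\Omega_-$. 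Together with a set $U_0$ compactly contained in $\Omega_\pm$, these form an open cover of the (truncated) support; let $\{\chi_j\}$ be a subordinate $C^\infty$ partition of unity.

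For the interior piece $\chi_0 f_\pm$, standard mollification $\rho_\varepsilon * (\chi_0 f_\pm)$ produces elements of $C_c^\infty(\Omega_\pm;\mathbb{C}^4) \subset C^\infty(\overline{\Omega_\pm};\mathbb{C}^4)$ converging in $\|\cdot\|_{\mathcal{D}_\pm}$. For each boundary piece $\chi_j f_\pm$ with $j \ge 1$, extended by zero to $\mathbb{R}^3$, I set
\begin{equation*}
  g_{j,t,\varepsilon}(x) := \int_{\mathbb{R}^3} \rho_\varepsilon(y)(\chi_j f_\pm)(x + t v_j^\pm - y)\,dy.
\end{equation*}
For $t$ small and $\varepsilon < t/2$, the cone property guarantees that on $\overline{\Omega_\pm}$ the integrand samples $\chi_j f_\pm$ only at points inside $\Omega_\pm$, so $g_{j,t,\varepsilon} \in C^\infty(\overline{\Omega_\pm};\mathbb{C}^4)$. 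Since the Dirac expression commutes with the translation by $-tv_j^\pm$ and with convolution, the same construction applied to $(-i\alpha\cdot\nabla + m\beta)(\chi_j f_\pm) \in L^2(\Omega_\pm;\mathbb{C}^4)$ yields $(-i\alpha\cdot\nabla + m\beta)g_{j,t,\varepsilon}$ on $\overline{\Omega_\pm}$; any distributional boundary term on the shifted surface $\Sigma - t v_j^\pm$ has been pushed into $\Omega_\mp$ and is annihilated by the mollifier of width $\varepsilon < t/2$. Continuity of translation in $L^2$ and standard mollifier lemmas then give $g_{j,t,\varepsilon} \to \chi_j f_\pm$ in $\|\cdot\|_{\mathcal{D}_\pm}$ as $t \to 0$ with $\varepsilon/t \to 0$; summing over $j$ completes the approximation.

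The main obstacle, and the only point at which the $C^2$-smoothness of $\Sigma$ enters in an essential way, is the construction of the uniform inward direction $v_j^+$ on each boundary patch, together with the verification that the shifted surface $\Sigma - t v_j^\pm$ separates from $\overline{\Omega_\pm}$ by a distance of order $t$. Once this geometric fact is established from a local $C^2$ graph representation of $\Sigma$, the remainder is a routine application of continuity of translation in $L^2$ and the standard properties of convolution mollifiers.
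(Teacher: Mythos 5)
Your proof is correct, but it takes a genuinely different route from the paper's. The paper proves density by a Hilbert-space orthogonality argument: assume $f \in \mathcal{D}_\pm$ is orthogonal to $C^\infty(\overline{\Omega_\pm};\mathbb{C}^4)$ in the graph inner product \eqref{dpm}; deduce from testing against $C_c^\infty(\Omega_\pm;\mathbb{C}^4)$ that $(-i\alpha\cdot\nabla+m\beta)^2 f = -f$ in the distributional sense; then, using the free Dirac resolvent $A_0^{-1}$ and a cutoff at infinity, identify $0\oplus(-i\alpha\cdot\nabla+m\beta)f$ with $A_0^{-1}(0\oplus -f) \in H^1(\mathbb{R}^3;\mathbb{C}^4)$, which has vanishing trace on $\Sigma$, so $(-i\alpha\cdot\nabla+m\beta)f \in H^1_0(\Omega_\pm;\mathbb{C}^4)$; an integration by parts then gives $\|(-i\alpha\cdot\nabla+m\beta)f\|^2 = -\|f\|^2$, forcing $f=0$. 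Your constructive Friedrichs localization–translation–mollification argument avoids any reference to $A_0$ and its resolvent, is entirely elementary, and transfers verbatim to any constant-coefficient first-order system on a Lipschitz domain satisfying the segment condition; the paper's argument is shorter and sidesteps the partition-of-unity and cone-condition bookkeeping by exploiting the special algebraic identity $A_0^2 = (-\Delta + m^2)I_4$ and the mapping properties of $A_0^{-1}$. One minor technical caveat in your version: the relation $\varepsilon < t/2$ only guarantees that the shifted singular support $\Sigma - tv_j^\pm$ stays at distance $> \varepsilon$ from $\overline{\Omega_\pm}$ if $v_j^\pm$ is nearly normal to $\Sigma$ on $U_j$; in general you need $\varepsilon < ct$ with $c>0$ depending on the transversality angle. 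This is easily arranged (take $v_j^+ = -\nu(x_j)$ and $U_j$ small, using $C^2$-regularity of $\Sigma$), but it is worth making explicit. Incidentally, \cite[Lemma~2.1]{BFSB17_1}, which the paper cites for its strategy, uses the orthogonality argument rather than mollification.
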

\begin{proof}
  We show this statement for $\mathcal{D}_-$, the assertion for $\mathcal{D}_+$ follows in almost the same way.
  Assume that $f \in \mathcal{D}_-$ satisfies
  \begin{equation} \label{equation_orthogonal_smooth}
    \big( f, g \big)_{\Omega_-} 
        + \big( (-i \alpha \cdot \nabla + m \beta) f, (-i \alpha \cdot \nabla + m \beta) g \big)_{\Omega_-} 
        = 0
  \end{equation}
  for all $g \in C^\infty(\overline{\Omega_-}; \mathbb{C}^4)$. 
  Since this is true, in particular, for any $g \in C_c^\infty(\Omega_-; \mathbb{C}^4)$, it follows that the distribution
  $(-i \alpha \cdot \nabla + m \beta)^2 f$ exists in $L^2(\Omega_-; \mathbb{C}^4)$ and coincides with $-f$.
  
  Next, we claim that 
  \begin{equation}\label{jabitte}
  (-i \alpha \cdot \nabla + m \beta) f \in H^1_0(\Omega_-; \mathbb{C}^4).
  \end{equation}
  To see this, let $A_0$ be the free Dirac operator in \eqref{def_free_Dirac}, 
  let $h \in C_c^\infty(\mathbb{R}^3; \mathbb{C}^4)$ and choose a smooth cutoff function $\chi: \mathbb{R}^3 \rightarrow [0, 1]$
  satisfying $\chi \equiv 1$ in $B(0, 1)$ and $\chi \equiv 0$ in $\mathbb{R}^3 \setminus B(0, 2)$.
  Set $\chi_l := \chi(\cdot/l)$, $l \in \mathbb{N}$. Then $(\chi_l A_0^{-1} h) \!\upharpoonright\! \Omega_-\in C^\infty(\overline{\Omega_-}; \mathbb{C}^4)$ 
  converges to $(A_0^{-1} h)_-$ in $H^1(\Omega_-; \mathbb{C}^4)$ as $l \rightarrow \infty$.
  Making use of \eqref{equation_orthogonal_smooth}, we conclude that
  \begin{equation*}
    \begin{split}
      \big( A_0^{-1} (0 \oplus -f),h \big)_{\mathbb{R}^3} 
          &= -\big( f, (A_0^{-1} h)_- \big)_{\Omega_-} 
          = -\lim_{l \rightarrow \infty} \big( f, (\chi_l A_0^{-1} h)_- \big)_{\Omega_-} \\
      &= \lim_{l \rightarrow \infty} \big( (-i \alpha \cdot \nabla + m \beta) f, 
          (-i \alpha \cdot \nabla + m \beta) (\chi_l A_0^{-1} h)_- \big)_{\Omega_-} \\
      &= \big( (-i \alpha \cdot \nabla + m \beta) f, 
          (-i \alpha \cdot \nabla + m \beta) (A_0^{-1} h)_- \big)_{\Omega_-} \\
      & = \big( 0 \oplus (-i \alpha \cdot \nabla + m \beta) f, h \big)_{\mathbb{R}^3}.
    \end{split}
  \end{equation*}
  Since this is true for any $h \in C_c^\infty(\mathbb{R}^3; \mathbb{C}^4)$ it follows that 
  \begin{equation*}
    0 \oplus (-i \alpha \cdot \nabla + m \beta) f = A_0^{-1} (0 \oplus -f) \in H^1(\mathbb{R}^3; \mathbb{C}^4).
  \end{equation*}
  Moreover, the trace of $0 \oplus (-i \alpha \cdot \nabla + m \beta) f$ at $\Sigma$ is equal to zero. This yields \eqref{jabitte}.
  
  From \eqref{jabitte} it is clear that there exists a sequence $(h_n) \subset C_c^\infty(\Omega_-; \mathbb{C}^4)$ such that 
  $h_n \rightarrow (-i \alpha \cdot \nabla + m \beta) f$ in $H^1(\Omega_-; \mathbb{C}^4)$. 
  Integration by parts yields finally that
  \begin{equation*}
    \begin{split}
      0 &\leq \big( (-i \alpha \cdot \nabla + m \beta) f, 
          (-i \alpha \cdot \nabla + m \beta) f \big)_{\Omega_-} 
          = \lim_{n \rightarrow \infty} \big( h_n, (-i \alpha \cdot \nabla + m \beta) f \big)_{\Omega_-} \\
      &= \lim_{n \rightarrow \infty} \big( (-i \alpha \cdot \nabla + m \beta) h_n, f \big)_{\Omega_-} 
          = \big( (-i \alpha \cdot \nabla + m \beta)^2 f, f \big)_{\Omega_-} \\
      &= \big( -f, f \big)_{\Omega_-} \leq 0.
    \end{split}
  \end{equation*}
  Thus, $f = 0$ and hence $C^\infty(\overline{\Omega_-}; \mathbb{C}^4)$ is dense in 
  $\mathcal{D}_-$.
\end{proof}

\section{Boundary triples for Dirac operators with $\delta$-shell interactions}
\label{section_boundary_triples_Dirac}

In this section we first provide a quasi boundary triple 
which is convenient to study Dirac operators with singular interactions supported on~$\Sigma$.
In a slightly different way this quasi boundary triple was already introduced in \cite{BEHL16_2}. Although only  $C^\infty$-smooth 
surfaces~$\Sigma$ were considered in \cite{BEHL16_2} the relevant results below remain valid for $C^2$-surfaces.
The main purpose is then to  extend and transform this quasi boundary triple to an ordinary boundary 
triple as explained in Section~\ref{section_boundary_triples}; cf. \cite{BM14}.

First, we define the operator 
$T := T_{\rm max} \upharpoonright H^1(\mathbb{R}^3 \setminus \Sigma; \mathbb{C}^4)$, 
that is, 
\begin{equation} \label{def_T}
  \begin{split}
   T f &:= (-i \alpha \cdot \nabla + m \beta) f_+ \oplus (-i \alpha \cdot \nabla + m \beta) f_-,\\
    \dom T &:= H^1(\mathbb{R}^3 \setminus \Sigma; \mathbb{C}^4) = H^1(\Omega_+; \mathbb{C}^4) \oplus H^1(\Omega_-; \mathbb{C}^4),
  \end{split}
\end{equation}
and the linear mappings $\Gamma_0, \Gamma_1: \dom T \rightarrow L^2(\Sigma; \mathbb{C}^4)$ by
\begin{equation} \label{def_Gamma}
  \Gamma_0 f = i \alpha \cdot \nu (f_+|_\Sigma - f_-|_\Sigma) \quad \text{and} \quad
  \Gamma_1 f = \frac{1}{2} (f_+|_\Sigma + f_-|_\Sigma),
  \quad f \in \dom T.
\end{equation}
Since $\Sigma$ is $C^2$-smooth, it follows that the normal vector field $\nu$ is differentiable and 
  hence, $\Gamma_0 f, \Gamma_1 f \in H^{1/2}(\Sigma; \mathbb{C}^4)$ for 
  $f \in \dom T$ by the trace theorem.

It can be deduced from \cite{BEHL16_2} that $\{ L^2(\Sigma; \mathbb{C}^4), \Gamma_0, \Gamma_1 \}$ is a quasi boundary 
triple for $\overline{T}$ (see \cite[Remark~3.3]{BEHL16_2}).
For the convenience of the reader we give a direct and simple proof here.

\begin{thm} \label{theorem_triple}
  Let $S$ be given by \eqref{sss} and let the mappings $T, \Gamma_0$ and $\Gamma_1$ be as in \eqref{def_T} and
  \eqref{def_Gamma}, respectively. 
  Then $\{ L^2(\Sigma; \mathbb{C}^4), \Gamma_0, \Gamma_1 \}$ is a quasi boundary triple
  for $T_{\rm{max}}=S^* = \overline{T}$ with 
  \begin{equation}\label{dr}
   \ran (\Gamma_0,\Gamma_1)^\top=H^{1/2}(\Sigma; \mathbb{C}^4) \times H^{1/2}(\Sigma; \mathbb{C}^4),
  \end{equation}
  and $T \upharpoonright \ker \Gamma_0$
  is the free Dirac operator $A_0$ in \eqref{def_free_Dirac}. 
\end{thm}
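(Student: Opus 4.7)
The plan is to verify the three axioms of Definition~\ref{qbtdef} for the triple $\{L^2(\Sigma;\mathbb{C}^4),\Gamma_0,\Gamma_1\}$, to identify $T\upharpoonright\ker\Gamma_0$ with the free Dirac operator $A_0$, to compute $\ran(\Gamma_0,\Gamma_1)^\top$ explicitly, and then to conclude $\overline{T}=T_{\rm max}=S^*$.

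For the abstract Green's identity I would fix $f,g\in\dom T=H^1(\Omega_+;\mathbb{C}^4)\oplus H^1(\Omega_-;\mathbb{C}^4)$ and integrate by parts separately on $\Omega_+$ and $\Omega_-$. Since each $\alpha_j$ and $\beta$ are Hermitian, the $m\beta$-contributions cancel, and the $-i\alpha\cdot\nabla$-part produces boundary terms of the form $\pm i(\alpha\cdot\nu\,f_\pm|_\Sigma,g_\pm|_\Sigma)_\Sigma$, the sign change on $\Omega_-$ reflecting that its outer unit normal is $-\nu$. Summing yields $(Tf,g)_{\mathbb{R}^3}-(f,Tg)_{\mathbb{R}^3}=i(\alpha\cdot\nu f_+|_\Sigma,g_+|_\Sigma)_\Sigma-i(\alpha\cdot\nu f_-|_\Sigma,g_-|_\Sigma)_\Sigma$. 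Expanding $\Gamma_0 f=i\alpha\cdot\nu(f_+|_\Sigma-f_-|_\Sigma)$ and $\Gamma_1 f=\tfrac{1}{2}(f_+|_\Sigma+f_-|_\Sigma)$ in $(\Gamma_1 f,\Gamma_0 g)_\Sigma-(\Gamma_0 f,\Gamma_1 g)_\Sigma$ and invoking the Hermiticity of $\alpha\cdot\nu$, the cross terms between $f_+$ and $g_-$ (and vice versa) cancel, leaving exactly the same expression.

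To identify $T\upharpoonright\ker\Gamma_0$ with $A_0$, I would use the anti-commutation relations~\eqref{equation_anti_commutation} together with $|\nu|=1$ to deduce the pointwise identity $(\alpha\cdot\nu)^2=I_4$. Hence $\Gamma_0 f=0$ is equivalent to $f_+|_\Sigma=f_-|_\Sigma$, which characterizes membership in $H^1(\mathbb{R}^3;\mathbb{C}^4)=\dom A_0$; since the differential expression agrees, $T\upharpoonright\ker\Gamma_0=A_0$, and $A_0$ is self-adjoint by a standard argument, see e.g.~\cite{T92,W03}. For the range claim, given arbitrary $\varphi,\psi\in H^{1/2}(\Sigma;\mathbb{C}^4)$, the $C^1$-regularity of $\nu$ (inherited from $\Sigma$ being $C^2$) ensures that $\alpha\cdot\nu$ is a bounded multiplier on $H^{1/2}(\Sigma;\mathbb{C}^4)$, so that $h_\pm:=\psi\mp\tfrac{i}{2}\alpha\cdot\nu\,\varphi$ again lie in $H^{1/2}(\Sigma;\mathbb{C}^4)$. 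Surjectivity of the Dirichlet trace $H^1(\Omega_\pm;\mathbb{C}^4)\to H^{1/2}(\Sigma;\mathbb{C}^4)$ then supplies $f_\pm\in H^1(\Omega_\pm;\mathbb{C}^4)$ with $f_\pm|_\Sigma=h_\pm$, and a direct computation using $(\alpha\cdot\nu)^2=I_4$ shows that $f=f_+\oplus f_-$ satisfies $\Gamma_0 f=\varphi$, $\Gamma_1 f=\psi$. Thus $\ran(\Gamma_0,\Gamma_1)^\top=H^{1/2}(\Sigma;\mathbb{C}^4)\times H^{1/2}(\Sigma;\mathbb{C}^4)$, which is manifestly dense in $L^2(\Sigma;\mathbb{C}^4)\times L^2(\Sigma;\mathbb{C}^4)$.

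It remains to justify $\overline{T}=T_{\rm max}=S^*$. The second equality is Proposition~\ref{corollary_S_star}. For the first I would invoke Lemma~\ref{lemma_core_T}: since $C^\infty(\overline{\Omega_\pm};\mathbb{C}^4)\subset H^1(\Omega_\pm;\mathbb{C}^4)\subset\mathcal{D}_\pm$ is dense in $\mathcal{D}_\pm$ with respect to the graph norm of $-i\alpha\cdot\nabla+m\beta$, the space $\dom T=H^1(\Omega_+;\mathbb{C}^4)\oplus H^1(\Omega_-;\mathbb{C}^4)$ is a core for $T_{\rm max}$, giving $\overline T=T_{\rm max}$. The principal technical obstacle is the Green's identity: tracking the complex conjugations, the opposite orientations of the outer normals on $\Omega_\pm$, and the non-commutativity of $\alpha\cdot\nu$ with the boundary values requires care, but once the bookkeeping is settled all other parts of the theorem follow from standard trace theory and the purely algebraic identity $(\alpha\cdot\nu)^2=I_4$.
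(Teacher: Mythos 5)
Your proposal is correct and follows essentially the same route as the paper: verify Green's identity by integration by parts on $\Omega_\pm$, identify $\ker\Gamma_0=H^1(\mathbb{R}^3;\mathbb{C}^4)$ via $(\alpha\cdot\nu)^2=I_4$, establish the range property with the trace theorem, and reduce $\overline T=T_{\rm max}=S^*$ to Lemma~\ref{lemma_core_T} and Proposition~\ref{corollary_S_star}. The only cosmetic difference is in the range argument, where you symmetrically prescribe $f_\pm|_\Sigma=\psi\mp\tfrac{i}{2}\alpha\cdot\nu\,\varphi$ while the paper prescribes $g_+|_\Sigma$ and then corrects with an $h\in H^1(\mathbb{R}^3;\mathbb{C}^4)$; both work equally well.
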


\begin{proof}
 Observe first that $\overline{T} = T_{\rm max}$ since $C^\infty(\overline{\Omega_+}; \mathbb{C}^4) \oplus C^\infty(\overline{\Omega_+}; \mathbb{C}^4)$
  is dense in $\dom T_{\rm max}$ with respect to the graph norm by Lemma~\ref{lemma_core_T}. Hence also the space
  $H^1(\Omega_+; \mathbb{C}^4) \oplus H^1(\Omega_+; \mathbb{C}^4)$
  is dense in $\dom T_{\rm max}$ with respect to the graph norm and thus $\overline{T} = T_{\rm max}$.
  Moreover, $S$ is closed and $S^*=T_{\rm max}$ by Proposition~\ref{corollary_S_star}.

  Next it will be shown that Green's identity holds.
  For this let $f = f_+ \oplus f_-$, $g = g_+ \oplus g_- \in \dom T = H^1(\Omega_+; \mathbb{C}^4) \oplus H^1(\Omega_-; \mathbb{C}^4)$.
  Using $(-i \alpha_j)^* = i \alpha_j$, $j \in \{1, 2, 3 \}$, we get by integration by parts
  \begin{equation*}
    \big( (-i \alpha \cdot \nabla + m \beta) f_\pm, g_\pm \big)_{\Omega_\pm}
     - \big( f_\pm, (-i \alpha \cdot \nabla + m \beta) g_\pm \big)_{\Omega_\pm}
     = \pm \big( -i \alpha \cdot \nu f_\pm|_\Sigma, g_\pm|_\Sigma \big)_\Sigma;
  \end{equation*}
  note that the normal vector field $\nu$ always points inside $\Omega_-$, hence there is a different sign 
  on the right hand side. By adding these two formulae for $\Omega_+$ and $\Omega_-$,
  we obtain
  \begin{equation*}
    (T f, g)_{\mathbb{R}^3} - (f, T g)_{\mathbb{R}^3} = (\Gamma_1 f, \Gamma_0 g)_\Sigma - (\Gamma_0 f, \Gamma_1 g)_\Sigma,
  \end{equation*}
  i.e. Green's identity in Definition~\ref{qbtdef}~(i) is valid.
  
  To prove the range property \eqref{dr} consider $\varphi, \psi\in H^{1/2}(\Sigma; \mathbb{C}^4)$. 
  By the trace theorem and $(\alpha \cdot \nu)^2 = I_4$ 
  there exists $g_+ \in H^1(\Omega_+; \mathbb{C}^4)$ and $h \in H^1(\mathbb{R}^3; \mathbb{C}^4)$ such that 
  \begin{equation*}
    i \alpha \cdot \nu g_+|_\Sigma = \varphi\quad\text{and}\quad h|_\Sigma = \psi - \frac{1}{2} g_+|_\Sigma.
  \end{equation*}
  Then $f := (g_+ \oplus 0) + h$ belongs to $\dom T = H^1(\mathbb{R}^3 \setminus \Sigma; \mathbb{C}^4)$
  and satisfies 
  \begin{equation*}
    \Gamma_0 f =  i \alpha \cdot \nu g_+|_\Sigma +  i \alpha \cdot \nu (h_+|_\Sigma - h_-|_\Sigma) = \varphi
    \quad\text{and}\quad \Gamma_1 f = \frac{1}{2} g_+|_\Sigma + h|_\Sigma = \psi.
  \end{equation*}
  This implies \eqref{dr} and hence item~(ii) in Definition~\ref{qbtdef}. 
  Finally, since $\ker \Gamma_0 = H^1(\mathbb{R}^3; \mathbb{C}^4)$ the restriction $T \upharpoonright \ker \Gamma_0$
  coincides with the free Dirac operator $A_0$ which is self-adjoint. Therefore, 
  $\{ L^2(\Sigma; \mathbb{C}^4), \Gamma_0, \Gamma_1 \}$ 
  is a quasi boundary triple for~$S^*$. 
\end{proof}

Next we provide the $\gamma$-field 
and Weyl function associated to the quasi boundary triple 
in Theorem~\ref{theorem_triple}. 

\begin{prop} \label{proposition_gamma_Weyl}
  Let $\{L^2(\Sigma;\mathbb C^4), \Gamma_0, \Gamma_1\}$ be the quasi boundary triple in Theorem~\ref{theorem_triple},
  let $\lambda \in \rho(A_0)= \mathbb{C} \setminus ( (-\infty, -m] \cup [m, \infty) )$
  and let $G_\lambda$ be the integral kernel of the resolvent of the free Dirac operator in \eqref{def_G_lambda}. 
  Then the following statements hold.
  \begin{itemize}
    \item[\rm (i)] The values $\gamma(\lambda): L^2(\Sigma; \mathbb{C}^4) \rightarrow L^2(\mathbb{R}^3; \mathbb{C}^4)$  of the $\gamma$-field 
    are defined on $H^{1/2}(\Sigma; \mathbb{C}^4)$ and given by
    \begin{equation*} 
      \begin{split}
        \gamma(\lambda) \varphi(x) = \int_\Sigma G_\lambda(x - y) \varphi(y) \mathrm{d} \sigma(y),
        \quad x \in \mathbb{R}^3, ~\varphi \in H^{1/2}(\Sigma; \mathbb{C}^4).
      \end{split}
    \end{equation*}
    Each $\gamma(\lambda)$ is a densely defined and bounded operator 
    from $L^2(\Sigma; \mathbb{C}^4)$ to $L^2(\mathbb{R}^3; \mathbb{C}^4)$ and 
    an everywhere defined  bounded operator from $H^{1/2}(\Sigma; \mathbb{C}^4)$ to 
    $H^1(\mathbb{R}^3 \setminus \Sigma; \mathbb{C}^4)$.
    The adjoint  $\gamma(\lambda)^*: L^2(\mathbb{R}^3; \mathbb{C}^4) \rightarrow L^2(\Sigma; \mathbb{C}^4)$ is 
    \begin{equation*}
      \gamma(\lambda)^* f(x) = \int_{\mathbb{R}^3} G_{\overline{\lambda}}(x - y) f(y) \mathrm{d} y,
      \quad x \in \Sigma, ~f \in L^2(\mathbb{R}^3; \mathbb{C}^4).
    \end{equation*}
   
    \item[\rm (ii)] The values $M(\lambda): L^2(\Sigma; \mathbb{C}^4) \rightarrow L^2(\Sigma; \mathbb{C}^4)$ of the Weyl function 
    are defined on $H^{1/2}(\Sigma; \mathbb{C}^4)$ and given by
    \begin{equation*}
      \begin{split}
        &M(\lambda) \varphi(x) := 
        \lim_{\varepsilon \searrow 0} \int_{|x - y| > \varepsilon} G_{\lambda}(x - y) \varphi(y) \mathrm{d} \sigma(y),
        \quad x \in \Sigma, ~\varphi \in H^{1/2}(\Sigma; \mathbb{C}^4).
      \end{split}
    \end{equation*}
    Each $M(\lambda)$ is a densely defined bounded operator in $L^2(\Sigma; \mathbb{C}^4)$ and an everywhere defined  bounded operator
   in $H^{1/2}(\Sigma; \mathbb{C}^4)$.
  \end{itemize}
\end{prop}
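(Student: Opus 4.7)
The plan is to construct the $\gamma$-field explicitly via the Cauchy-type potential $\Phi_\lambda\varphi(x):=\int_\Sigma G_\lambda(x-y)\varphi(y)\,d\sigma(y)$ and then identify $\Phi_\lambda=\gamma(\lambda)$ using the inverse characterization $\gamma(\lambda)=(\Gamma_0\upharpoonright\ker(T-\lambda))^{-1}$ from Definition~\ref{gammdef}. Because $G_\lambda$ is the convolution kernel of $(A_0-\lambda)^{-1}$, for $\varphi\in H^{1/2}(\Sigma;\mathbb{C}^4)$ the function $\Phi_\lambda\varphi$ satisfies $(-i\alpha\cdot\nabla+m\beta-\lambda)\Phi_\lambda\varphi=0$ pointwise in $\mathbb{R}^3\setminus\Sigma$. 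Standard mapping properties of single-layer potentials on $C^2$-surfaces, applied to the Helmholtz-type component of $G_\lambda$ from \eqref{def_G_lambda}, together with the structure of the Cauchy-kernel-type term $\tfrac{i\alpha\cdot(x-y)}{|x-y|^2}$, yield $\Phi_\lambda\varphi_\pm\in H^1(\Omega_\pm;\mathbb{C}^4)$, so that $\Phi_\lambda\varphi\in\dom T\cap\ker(T-\lambda)$.

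The central technical step is to establish the Plemelj--Sokhotski-type jump formulas
\[
\Phi_\lambda\varphi_\pm|_\Sigma=\mp\tfrac{i}{2}\alpha\cdot\nu\,\varphi+\mathrm{p.v.}\int_\Sigma G_\lambda(\cdot-y)\varphi(y)\,d\sigma(y),
\]
which I would derive by decomposing $G_\lambda(x)=\tfrac{i(\alpha\cdot x)}{4\pi|x|^3}+R_\lambda(x)$, where the first, strongly singular (homogeneity~$-2$) term is a Riesz-type Calder\'on--Zygmund kernel whose jump relation on $C^2$-hypersurfaces is classical, and $R_\lambda$ is weakly singular and thus gives rise to an operator with continuous boundary values. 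These jump relations can alternatively be cited from~\cite{AMV14, AMV15, BEHL16_2}, which remain valid in the $C^2$-setting. Using $(\alpha\cdot\nu)^2=I_4$, applying $\Gamma_0$ to these formulas gives $\Gamma_0\Phi_\lambda\varphi=\varphi$, so $\Phi_\lambda\varphi$ is the unique element of $\ker(T-\lambda)$ with boundary value $\varphi$; hence $\gamma(\lambda)\varphi=\Phi_\lambda\varphi$.

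The boundedness of $\gamma(\lambda):H^{1/2}(\Sigma;\mathbb{C}^4)\to H^1(\mathbb{R}^3\setminus\Sigma;\mathbb{C}^4)$ follows directly from the layer-potential mapping properties used above. Density and boundedness of $\gamma(\lambda)$ from $L^2(\Sigma;\mathbb{C}^4)$ into $L^2(\mathbb{R}^3;\mathbb{C}^4)$ are then obtained by duality from the identity $\gamma(\lambda)^*=\Gamma_1(A_0-\overline\lambda)^{-1}$ in \eqref{equation_gamma_star}: indeed $(A_0-\overline\lambda)^{-1}$ maps $L^2(\mathbb{R}^3;\mathbb{C}^4)$ boundedly into $H^1(\mathbb{R}^3;\mathbb{C}^4)$ and the trace operator is bounded into $H^{1/2}(\Sigma;\mathbb{C}^4)\hookrightarrow L^2(\Sigma;\mathbb{C}^4)$. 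Inserting the explicit kernel \eqref{equation_resolvent_A_0} into $\Gamma_1(A_0-\overline\lambda)^{-1}$ yields the stated integral representation of $\gamma(\lambda)^*$.

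For item~(ii), applying $\Gamma_1$ to the jump formulas gives $M(\lambda)\varphi=\Gamma_1\gamma(\lambda)\varphi=\tfrac{1}{2}(\Phi_\lambda\varphi_+|_\Sigma+\Phi_\lambda\varphi_-|_\Sigma)$; the contributions $\mp\tfrac{i}{2}\alpha\cdot\nu\varphi$ cancel, producing exactly the principal-value integral. Boundedness of $M(\lambda)$ in $H^{1/2}(\Sigma;\mathbb{C}^4)$ then follows from the corresponding boundedness of $\gamma(\lambda)$ combined with the trace theorem, while boundedness in $L^2(\Sigma;\mathbb{C}^4)$ reduces to the $L^2$-boundedness of Cauchy-type singular integral operators on $C^2$-hypersurfaces applied to the principal part of $G_\lambda$, the remainder being a weakly singular integral operator. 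The main obstacle throughout is the jump-relation step: ensuring that the classical Plemelj-type formulas and the Calder\'on--Zygmund-type $L^2$-bounds transfer from the $C^\infty$ setting in~\cite{BEHL16_2} to $C^2$-surfaces; this is covered by the standard theory of layer potentials on Lipschitz and $C^{1,\alpha}$-boundaries.
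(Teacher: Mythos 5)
Your proof is correct, but it follows a genuinely different route from the one in the paper. You rebuild the $\gamma$-field and Weyl function from scratch via the single-layer-type potential $\Phi_\lambda$, the Plemelj--Sokhotski jump relations, and Calder\'on--Zygmund/layer-potential mapping theory on $C^2$-surfaces; this is essentially a re-derivation of \cite[Proposition~3.4]{BEHL16_2}, which the paper simply cites for the explicit formulas and the $L^2$-boundedness. Where the two arguments diverge most is the proof that $\gamma(\lambda)\colon H^{1/2}(\Sigma;\mathbb{C}^4)\to H^1(\mathbb{R}^3\setminus\Sigma;\mathbb{C}^4)$ is bounded: you invoke the quantitative mapping properties of layer potentials (which would need to be checked carefully in the $C^2$ setting), whereas the paper sidesteps all of that with an abstract argument---from Definition~\ref{gammdef} one already knows $\ran\gamma(\lambda)=\ker(T-\lambda)\subset\dom T=H^1(\mathbb{R}^3\setminus\Sigma;\mathbb{C}^4)$ and $\ran M(\lambda)\subset\ran\Gamma_1\subset H^{1/2}(\Sigma;\mathbb{C}^4)$, and then the $H^{1/2}\to H^1$ bound follows from the $L^2$-bound by a closed graph argument, after which the $H^{1/2}$-bound for $M(\lambda)=\Gamma_1\gamma(\lambda)$ is just the trace theorem. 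That abstract shortcut is the key idea you missed, and it is what lets the paper avoid re-proving any potential-theoretic mapping results; your approach buys self-containedness at the cost of having to justify the layer-potential machinery (jump relations, $H^{1/2}\to H^1$ continuity, Coifman--McIntosh--Meyer-type $L^2$ bounds) on $C^2$-surfaces, a point you explicitly flag as the main obstacle but leave to citation.
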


\begin{proof}
Since the triple $\{ L^2(\Sigma; \mathbb{C}^4), \Gamma_0, \Gamma_1 \}$ in Theorem~\ref{theorem_triple} is a restriction of the quasi boundary triple 
considered in \cite{BEHL16_2}, the $\gamma$-field, the Weyl function and the adjoint $\gamma$-field are restrictions of the corresponding operators
there; their explicit computation can be found in \cite[Proposition~3.4]{BEHL16_2}.
By definition and from \cite[Proposition~3.4]{BEHL16_2} it is also clear that $\gamma(\lambda)$ and $M(\lambda)$
  are defined on $\ran \Gamma_0 = H^{1/2}(\Sigma; \mathbb{C}^4)$ and are bounded operators in the respective $L^2$-spaces.
  Moreover, by Definition~\ref{gammdef} we have
  $\ran \gamma(\lambda) = \ker(T-\lambda) \subset H^1(\mathbb{R}^3 \setminus \Sigma; \mathbb{C}^4)$ and 
  $\ran M(\lambda) \subset \ran \Gamma_1 \subset H^{1/2}(\Sigma; \mathbb{C}^4)$.
  
  To see that $\gamma(\lambda): H^{1/2}(\Sigma; \mathbb{C}^4) \rightarrow H^1(\mathbb{R}^3 \setminus \Sigma; \mathbb{C}^4)$
  is bounded it suffices to show that $\gamma(\lambda)$ is closed. Assume that $(\varphi_n) \subset H^{1/2}(\Sigma; \mathbb{C}^4)$ is a sequence such that 
  \begin{equation*}
    \varphi_n \rightarrow \varphi \quad \text{in } H^{1/2}(\Sigma; \mathbb{C}^4)\quad \text{ and}\quad
    \gamma(\lambda) \varphi_n \rightarrow f \quad \text{in } H^1(\mathbb{R}^3 \setminus \Sigma; \mathbb{C}^4).
  \end{equation*}
  Clearly, $\varphi \in H^{1/2}(\Sigma; \mathbb{C}^4) =  \dom \gamma(\lambda)$ and $\varphi_n \rightarrow \varphi$ in $L^2(\Sigma; \mathbb{C}^4)$.
  Since $\gamma(\lambda)$ is bounded in the respective $L^2$-spaces, we have 
  $\gamma(\lambda) \varphi_n \rightarrow \gamma(\lambda) \varphi$ in $L^2(\mathbb{R}^3; \mathbb{C}^4)$.
  This implies $\gamma(\lambda) \varphi = f$ and therefore
  $\gamma(\lambda): H^{1/2}(\Sigma; \mathbb{C}^4) \rightarrow H^1(\mathbb{R}^3 \setminus \Sigma; \mathbb{C}^4)$
  is closed and everywhere defined, and hence bounded.
  
  Finally, since $\gamma(\lambda): H^{1/2}(\Sigma; \mathbb{C}^4) \rightarrow H^1(\mathbb{R}^3 \setminus \Sigma; \mathbb{C}^4)$
  is bounded, the continuity of the operator $M(\lambda) = \Gamma_1 \gamma(\lambda)$ in $H^{1/2}(\Sigma; \mathbb{C}^4)$
  follows from the continuity of the trace map. 
\end{proof}

Recall that $M(\lambda)$ is injective for any $\lambda \in \mathbb{C} \setminus \big( (-\infty, -m] \cup [m, \infty) \big)$ 
and that its inverse is given by
\begin{equation} \label{equation_M_inv}
  M(\lambda)^{-1} = -4 \alpha \cdot \nu M(\lambda) \alpha \cdot \nu.
\end{equation}
In particular, $M(\lambda)$ is bijective in $H^{1/2}(\Sigma; \mathbb{C}^4)$.
For $\lambda \in (-m, m)$ equation \eqref{equation_M_inv}
follows from the known identity $-4 (M(\lambda) \alpha \cdot \nu)^2 = I_4$, 
which is stated, e.g., in \cite[Lemma~2.2~(ii)]{AMV15} (note that $M(\lambda) = C_\sigma^\lambda$ 
in the notation of \cite[Lemma~2.2]{AMV15}).
For $\lambda \in \mathbb{C} \setminus \mathbb{R}$ the above 
formula~\eqref{equation_M_inv} follows then by an analytic continuation argument.

Next we extend and transform the 
quasi boundary triple from Theorem~\ref{theorem_triple} to an ordinary boundary triple for $S^*$ 
using Proposition~\ref{proposition_extension_boundary_mappings_abstract} and Theorem~\ref{theorem_extended_boundary_triple_abstract}.
Recall from \eqref{def_G_0_G_1} that $\mathscr{G}_0$ and $\mathscr{G}_1$ are defined by
\begin{equation}\label{ggg}
  \mathscr{G}_0 := \ran (\Gamma_0 \upharpoonright \ker \Gamma_1) \quad\text{and}\quad 
  \mathscr{G}_1 := \ran (\Gamma_1 \upharpoonright \ker \Gamma_0).
\end{equation} 

\begin{lem} \label{lemma_A_infty}
  Let $\{ L^2(\Sigma; \mathbb{C}^4), \Gamma_0, \Gamma_1 \}$ be the quasi boundary triple in Theorem~\ref{theorem_triple}.
  Then the operator $A_\infty := T \upharpoonright \ker \Gamma_1$ is self-adjoint, the spaces $\mathscr{G}_0$ 
  and $\mathscr{G}_1$ in \eqref{ggg} are  
  \begin{equation}\label{ghj}
      \mathscr{G}_0 = \mathscr{G}_1  = H^{1/2}(\Sigma; \mathbb{C}^4),
    \end{equation}  
   and $\Gamma_0$ and $\Gamma_1$ have surjective extensions
  \begin{equation*}
    \widetilde{\Gamma}_0: \dom T_{\rm max} \rightarrow H^{-1/2}(\Sigma; \mathbb{C}^4) \quad \text{and} \quad
    \widetilde{\Gamma}_1: \dom T_{\rm max} \rightarrow H^{-1/2}(\Sigma; \mathbb{C}^4),
  \end{equation*}
  which are continuous with respect to the graph norm of $T_{\rm max}$. 
\end{lem}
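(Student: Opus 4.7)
My plan is to verify the three claims in order: first, the identification $\mathscr G_0=\mathscr G_1=H^{1/2}(\Sigma;\mathbb C^4)$; second, the self-adjointness of $A_\infty$; and third, the existence of the surjective continuous extensions via Proposition~\ref{proposition_extension_boundary_mappings_abstract}.

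For the first step, the inclusions $\mathscr G_0,\mathscr G_1\subset H^{1/2}(\Sigma;\mathbb C^4)$ are immediate from the trace theorem together with the $C^1$-regularity of $\nu$ already noted after \eqref{def_Gamma}. For the reverse inclusions I would construct explicit preimages using $(\alpha\cdot\nu)^2=I_4$. Given $\varphi\in H^{1/2}(\Sigma;\mathbb C^4)$, the function $-\tfrac{i}{2}\alpha\cdot\nu\,\varphi$ lies in $H^{1/2}(\Sigma;\mathbb C^4)$, so by the surjectivity of the Dirichlet trace on each $\Omega_\pm$ I can pick $f_\pm\in H^1(\Omega_\pm;\mathbb C^4)$ with $f_+|_\Sigma=-\tfrac{i}{2}\alpha\cdot\nu\,\varphi=-f_-|_\Sigma$; then $f=f_+\oplus f_-\in\dom T$ satisfies $\Gamma_1 f=0$ and $\Gamma_0 f=\varphi$, giving $\mathscr G_0=H^{1/2}(\Sigma;\mathbb C^4)$. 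For $\mathscr G_1$, given $\psi\in H^{1/2}(\Sigma;\mathbb C^4)$, any $H^1(\mathbb R^3;\mathbb C^4)$-lift of $\psi$ belongs to $\ker\Gamma_0$ and maps to $\psi$ under $\Gamma_1$.

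For the second step I would note that $A_\infty=T\upharpoonright\ker\Gamma_1$ is the canonical extension $A_\vartheta$ with $\vartheta=0$ in \eqref{equation_def_extension}, hence symmetric by \eqref{abab}. To upgrade symmetry to self-adjointness I invoke Theorem~\ref{theorem_Krein_abstract}~(ii) with $\vartheta=0$ at $\lambda=\pm i$, which reduces the task to verifying $\ran\gamma(\overline\lambda)^*\subset\ran(-M(\lambda))$. Formula \eqref{equation_gamma_star} shows $\gamma(\overline\lambda)^*=\Gamma_1(A_0-\lambda)^{-1}$ maps $L^2(\mathbb R^3;\mathbb C^4)$ into $\ran\Gamma_1\subset H^{1/2}(\Sigma;\mathbb C^4)$, while \eqref{equation_M_inv} gives that $M(\lambda)$ is a bijection on $H^{1/2}(\Sigma;\mathbb C^4)$, so the range inclusion holds automatically. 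This is the only nontrivial step, and the work has been pushed onto the already established mapping properties of $M(\lambda)$.

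With $\mathscr G_0=\mathscr G_1=H^{1/2}(\Sigma;\mathbb C^4)$ dense in $L^2(\Sigma;\mathbb C^4)$ and the self-adjointness of $A_\infty$ at hand, the hypotheses of Proposition~\ref{proposition_extension_boundary_mappings_abstract} are satisfied. Since the dual of $H^{1/2}(\Sigma;\mathbb C^4)$ with respect to the $L^2(\Sigma;\mathbb C^4)$-pairing is $H^{-1/2}(\Sigma;\mathbb C^4)$, that proposition yields the desired surjective continuous extensions
\begin{equation*}
\widetilde\Gamma_0,\widetilde\Gamma_1:\bigl(\dom T_{\max},\Vert\cdot\Vert_{T_{\max}}\bigr)\to H^{-1/2}(\Sigma;\mathbb C^4),
\end{equation*}
where $\dom T_{\max}=\dom S^*$ by Proposition~\ref{corollary_S_star}, completing the proof.
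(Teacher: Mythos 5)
Your proposal is correct and follows essentially the same route as the paper: explicit preimages to identify $\mathscr G_0=\mathscr G_1=H^{1/2}(\Sigma;\mathbb C^4)$, the Birman--Schwinger criterion of Theorem~\ref{theorem_Krein_abstract}~(ii) combined with the bijectivity of $M(\lambda)$ on $H^{1/2}(\Sigma;\mathbb C^4)$ from \eqref{equation_M_inv} to get self-adjointness, and then Proposition~\ref{proposition_extension_boundary_mappings_abstract}. The only (immaterial) deviations are the reordering of the first two steps and your choice to verify surjectivity at $\lambda=\pm i$ rather than at $\lambda=0$ as the paper does; both work because $M(\lambda)$ is bijective on $H^{1/2}(\Sigma;\mathbb C^4)$ for every $\lambda\in\rho(A_0)$ and $\ran\gamma(\overline\lambda)^*=H^{1/2}(\Sigma;\mathbb C^4)$ in either case.
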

\begin{proof}
  First, it will be shown that $A_\infty$ is self-adjoint.
  Because of Green's formula (Definition~\ref{qbtdef}~(i)) we see immediately that $A_\infty$ is symmetric. 
  To prove that 
  $A_\infty$ is self-adjoint it suffices to check  $\ran A_\infty = L^2(\mathbb{R}^3; \mathbb{C}^4)$,
  which by Theorem~\ref{theorem_Krein_abstract}~(ii) is the case if and only if $\ran \gamma(0)^* \subset \ran M(0)$.
  The latter inclusion holds since $\ran \gamma(0)^*=\ran \Gamma_1 A_0^{-1}=H^{1/2}(\Sigma; \mathbb{C}^4)$ by \eqref{equation_gamma_star} 
  and \eqref{equation_M_inv} yields that $M(0)$ is bijective in $H^{1/2}(\Sigma; \mathbb{C}^4)$. 
  Therefore $A_\infty$ is self-adjoint.

  In order to show \eqref{ghj} 
  let $\varphi \in H^{1/2}(\Sigma; \mathbb{C}^4)$ and choose functions $f_\pm \in H^1(\Omega_\pm; \mathbb{C}^4)$
  with $f_\pm|_\Sigma = \mp \frac{1}{2} i \alpha \cdot \nu \varphi$. Then 
  $f = f_+ \oplus f_- \in \ker \Gamma_1$ and $\Gamma_0 f = \varphi$, and hence $\mathscr{G}_0 =  H^{1/2}(\Sigma; \mathbb{C}^4)$. 
  To show the claim on $\mathscr{G}_1$ consider  
  $\varphi \in H^{1/2}(\Sigma; \mathbb{C}^4)$ and choose $f \in H^1(\mathbb{R}^3; \mathbb{C}^4)$
  with $f|_\Sigma = \varphi$.
  Then $f \in \ker \Gamma_0$ and $\Gamma_1 f = f|_\Sigma = \varphi$. Hence \eqref{ghj} is shown. 
  
  The last assertion on the surjective extensions of $\Gamma_0$ and $\Gamma_1$ is now an immediate 
  consequence of Proposition~\ref{proposition_extension_boundary_mappings_abstract}.
  \end{proof}

Next we discuss the extensions of the $\gamma$-field and the Weyl function of the quasi boundary triple $\{ L^2(\Sigma; \mathbb{C}^4), \Gamma_0, \Gamma_1 \}$.

\begin{prop} \label{proposition_extension_gamma_Weyl}
  Let $\{ L^2(\Sigma; \mathbb{C}^4), \Gamma_0, \Gamma_1 \}$ be the quasi boundary triple from Theorem~\ref{theorem_triple}
  with corresponding $\gamma$-field $\gamma$ and Weyl function $M$.
  Then the following assertions hold for all $\lambda\in\rho(A_0)$:
  \begin{itemize}
    \item[\rm (i)] The values of the  $\gamma$-field admit continuous extensions
    \begin{equation*}
      \widetilde{\gamma}(\lambda) = \big( \widetilde{\Gamma}_0 \upharpoonright \ker(T_{\rm max} - \lambda) \big)^{-1}:
      H^{-1/2}(\Sigma; \mathbb{C}^4) \rightarrow L^2(\mathbb{R}^3; \mathbb{C}^4).
    \end{equation*}
    
    \item[\rm (ii)] The values of the Weyl function admit continuous extensions
    \begin{equation*}
      \widetilde{M}(\lambda) = \widetilde{\Gamma}_1\big( \widetilde{\Gamma}_0 \upharpoonright \ker(T_{\rm max} - \lambda) \big)^{-1}:
      H^{-1/2}(\Sigma; \mathbb{C}^4) \rightarrow H^{-1/2}(\Sigma; \mathbb{C}^4).
    \end{equation*}
    Moreover, it holds for any $\varphi \in H^{1/2}(\Sigma; \mathbb{C}^4)$ and $\psi \in H^{-1/2}(\Sigma; \mathbb{C}^4)$
    \begin{equation}\label{show}
       \big( \varphi, \widetilde{M}(\lambda) \psi \big)_{1/2 \times -1/2}
         = \big( M(\overline{\lambda}) \varphi, \psi \big)_{1/2 \times -1/2}.
    \end{equation}
    
    \item [\rm (iii)] The operators
    \begin{equation*}
    \widetilde{M}(\lambda)^2 - \frac{1}{4} I_4: H^{-1/2}(\Sigma; \mathbb{C}^4) \rightarrow H^{1/2}(\Sigma; \mathbb{C}^4)
  \end{equation*}
    are well-defined and bounded. In particular, $M(\lambda)^2 - \frac{1}{4}I_4$ is compact in $H^{1/2}(\Sigma; \mathbb{C}^4)$.
  \end{itemize}
\end{prop}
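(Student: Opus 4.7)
The plan for items (i) and (ii) is to apply the abstract extension result Proposition~\ref{proposition_extension_gamma_Weyl_abstract} to the quasi boundary triple $\{L^2(\Sigma;\mathbb{C}^4),\Gamma_0,\Gamma_1\}$ from Theorem~\ref{theorem_triple}. The required hypotheses -- self-adjointness of $A_\infty=T\upharpoonright\ker\Gamma_1$ together with density of $\mathscr{G}_0,\mathscr{G}_1$ in $L^2(\Sigma;\mathbb{C}^4)$ -- have been verified in Lemma~\ref{lemma_A_infty}, which further identifies both spaces with $H^{1/2}(\Sigma;\mathbb{C}^4)$, so that $\mathscr{G}_1'=H^{-1/2}(\Sigma;\mathbb{C}^4)$. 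This immediately delivers the continuous extensions $\widetilde{\gamma}(\lambda)$ and $\widetilde{M}(\lambda)$ with the stated mapping properties.

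For the duality identity \eqref{show} in (ii) I would first use \eqref{equation_diff_m} together with the fact that $M(\lambda)$ is bounded and everywhere defined on the dense subspace $H^{1/2}(\Sigma;\mathbb{C}^4)\subset L^2(\Sigma;\mathbb{C}^4)$ to conclude $M(\lambda)^*=M(\overline{\lambda})$ in the $L^2$-sense. For $\varphi,\psi\in H^{1/2}(\Sigma;\mathbb{C}^4)$ this yields $(\varphi,M(\lambda)\psi)_\Sigma=(M(\overline{\lambda})\varphi,\psi)_\Sigma$, which, after inserting $(I_4-\Delta_\Sigma)^{\pm 1/4}$, takes the required form in the $(\cdot,\cdot)_{1/2\times -1/2}$ pairing. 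Extending in $\psi$ to $H^{-1/2}(\Sigma;\mathbb{C}^4)$ by continuity of $\widetilde{M}(\lambda)$ and of the duality product gives \eqref{show}.

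The heart of the proposition is item (iii). The starting point is the algebraic identity $(M(\lambda)\alpha\cdot\nu)^2=-\tfrac{1}{4}I_4$ recorded in \eqref{equation_M_inv}, which rearranges to
\begin{equation*}
  M(\lambda)^2-\tfrac{1}{4}I_4 = M(\lambda)\bigl(M(\lambda)+\alpha\cdot\nu\,M(\lambda)\,\alpha\cdot\nu\bigr),
\end{equation*}
and the analogous factorization is valid for $\widetilde{M}(\lambda)$ on $H^{-1/2}(\Sigma;\mathbb{C}^4)$ by continuity. Since $M(\lambda)$ is bounded on $H^{1/2}(\Sigma;\mathbb{C}^4)$ by Proposition~\ref{proposition_gamma_Weyl}~(ii), it suffices to show that $\widetilde{M}(\lambda)+\alpha\cdot\nu\,\widetilde{M}(\lambda)\,\alpha\cdot\nu$ maps $H^{-1/2}(\Sigma;\mathbb{C}^4)$ continuously into $H^{1/2}(\Sigma;\mathbb{C}^4)$. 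On the level of integral kernels this amounts to analyzing
\begin{equation*}
  K(x,y):=G_\lambda(x-y)+\alpha\cdot\nu(x)\,G_\lambda(x-y)\,\alpha\cdot\nu(y),\qquad x,y\in\Sigma.
\end{equation*}
The worst singular part of $G_\lambda(x-y)$ is $\frac{i\,\alpha\cdot(x-y)}{4\pi|x-y|^3}$, and a short computation based on the anti-commutation relations \eqref{equation_anti_commutation} shows that when $\nu(x)=\nu(y)=\nu$ one has $\alpha\cdot(x-y)+\alpha\cdot\nu\,\alpha\cdot(x-y)\,\alpha\cdot\nu=2\bigl((x-y)\cdot\nu\bigr)\alpha\cdot\nu$; for general $x,y\in\Sigma$ one gets additional contributions involving the factors $\nu(x)-\nu(y)$. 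Exploiting the $C^2$-regularity of $\Sigma$ (which guarantees $|(x-y)\cdot\nu(y)|\le C|x-y|^2$ and $|\nu(x)-\nu(y)|\le C|x-y|$ locally) then shows that the worst $|x-y|^{-2}$-singularity in $K(x,y)$ cancels and one is left with a weakly singular kernel of order $|x-y|^{-1}$, from which the mapping property $H^{-1/2}(\Sigma;\mathbb{C}^4)\to H^{1/2}(\Sigma;\mathbb{C}^4)$ follows by standard facts about such double-layer-type integral operators on smooth surfaces.

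The compactness of $M(\lambda)^2-\tfrac{1}{4}I_4$ on $H^{1/2}(\Sigma;\mathbb{C}^4)$ is then automatic: the operator factors through the compact Rellich embedding $H^{1/2}(\Sigma;\mathbb{C}^4)\hookrightarrow H^{-1/2}(\Sigma;\mathbb{C}^4)$ followed by the bounded operator $\widetilde{M}(\lambda)^2-\tfrac{1}{4}I_4:H^{-1/2}(\Sigma;\mathbb{C}^4)\to H^{1/2}(\Sigma;\mathbb{C}^4)$. The main obstacle I expect is the kernel-cancellation step in (iii); turning the heuristic $|x-y|^{-1}$-bound into a sharp mapping property $H^{-1/2}\to H^{1/2}$ on the $C^2$-surface $\Sigma$ requires either a careful parametrix argument or an appeal to the established theory of weakly singular boundary integral operators.
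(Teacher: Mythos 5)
Your items (i) and (ii) follow the paper's route exactly: Proposition~\ref{proposition_extension_gamma_Weyl_abstract} applied via Lemma~\ref{lemma_A_infty}, then the duality identity \eqref{show} from the symmetry relation $M(\lambda)^*=M(\overline\lambda)$ in $L^2(\Sigma;\mathbb{C}^4)$ plus insertion of $(I_4-\Delta_\Sigma)^{\pm 1/4}$ and a density argument. The factorization in (iii) is also algebraically the same as the paper's (your $M(\lambda)+\alpha\cdot\nu\,M(\lambda)\,\alpha\cdot\nu$ is $\alpha\cdot\nu\,\mathcal A$ for the paper's $\mathcal A=\alpha\cdot\nu\,M(\lambda)+M(\lambda)\,\alpha\cdot\nu$), and the compactness conclusion by composing with the Rellich embedding matches the paper.

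Where you diverge is in how the smoothing property $H^{-1/2}(\Sigma;\mathbb{C}^4)\to H^{1/2}(\Sigma;\mathbb{C}^4)$ is established and in how general $\lambda$ is handled. The paper does \emph{not} run a kernel argument for all $\lambda$; it invokes the external result \cite[Proposition~2.8]{OV16} (which is precisely the kernel-cancellation/weakly-singular boundary integral estimate you sketch) for the single value $\lambda=0$, and then bootstraps to arbitrary $\lambda\in\rho(A_0)$ by the purely algebraic identity $M(\lambda)=M(0)+\lambda\,\gamma(0)^*\gamma(\lambda)$ from \eqref{equation_diff_m}, using that $\gamma(0)^*:L^2(\mathbb{R}^3;\mathbb{C}^4)\to H^{1/2}(\Sigma;\mathbb{C}^4)$ is bounded (by \eqref{equation_gamma_star} and closed-graph) and $\widetilde\gamma(\lambda):H^{-1/2}(\Sigma;\mathbb{C}^4)\to L^2(\mathbb{R}^3;\mathbb{C}^4)$ is bounded by part (i). This cleanly separates the one hard analytic estimate from the $\lambda$-dependence. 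Your plan to repeat the kernel analysis directly for $G_\lambda$ is in principle possible since the leading singularity of $G_\lambda$ is $\lambda$-independent, but it is more work and, as you already note, the step from a pointwise bound $|K(x,y)|\lesssim|x-y|^{-1}$ on the $2$-dimensional $C^2$-surface to the full smoothing gain $H^{-1/2}\to H^{1/2}$ is not an elementary Schur-test consequence; one genuinely needs either the pseudodifferential-order-$(-1)$ structure or a careful boundary-integral argument, which is exactly the content of \cite[Proposition~2.8]{OV16}. So the real content of (iii) is neither carried out here nor by the paper; both appeals are to the same external estimate, the paper simply localizes it at $\lambda=0$ and removes the $\lambda$-dependence by algebra. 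Your cancellation computation $\alpha\cdot a+\alpha\cdot\nu\,\alpha\cdot a\,\alpha\cdot\nu=2(\nu\cdot a)\,\alpha\cdot\nu$ and the use of $|(x-y)\cdot\nu(y)|\lesssim|x-y|^2$, $|\nu(x)-\nu(y)|\lesssim|x-y|$ on a $C^2$-surface is correct and is exactly the mechanism behind that reference.
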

\begin{proof}
  Proposition~\ref{proposition_extension_gamma_Weyl_abstract} implies 
  the existence and continuity of $\widetilde{\gamma}(\lambda)$ and $\widetilde{M}(\lambda)$ in (i) and  (ii).
  In order to show \eqref{show} let $\varphi,\psi \in H^{1/2}(\Sigma; \mathbb{C}^4)$. Making use of \eqref{equation_diff_m} we find 
  \begin{equation*}
    \begin{split}
      \big( \varphi, \widetilde{M}(\lambda) \psi \big)_{1/2 \times -1/2}
          &= \big( (I_4 - \Delta_\Sigma)^{1/4} \varphi, (I_4 - \Delta_\Sigma)^{-1/4} \widetilde{M}(\lambda) \psi \big)_\Sigma \\
      &= \big( \varphi, M(\lambda) \psi \big)_\Sigma
          = \big( M(\overline{\lambda}) \varphi, \psi \big)_\Sigma \\
      &= \big( (I_4 - \Delta_\Sigma)^{1/4} M(\overline{\lambda})\varphi, (I_4 - \Delta_\Sigma)^{-1/4} \psi \big)_\Sigma \\
      &= \big( M(\overline{\lambda}) \varphi, \psi \big)_{1/2 \times -1/2}.
    \end{split}
  \end{equation*}
  A density argument yields \eqref{show}. 
  
  To prove item~(iii) we first consider the case $\lambda = 0$. 
  Note that equation~\eqref{equation_M_inv} and $(\alpha \cdot \nu)^2 = I_4$ imply
  \begin{equation} \label{equation_M_square}
      M(0)^2 - \frac{1}{4} I_4 = M(0) \alpha \cdot \nu \big( \alpha \cdot \nu M(0) + M(0) \alpha \cdot \nu \big).
  \end{equation}
  According to \cite[Proposition~2.8]{OV16} the operator 
  \begin{equation} \label{equation_def_A}
    \mathcal{A} := \alpha \cdot \nu M(0) + M(0) \alpha \cdot \nu: 
    H^{1/2}(\Sigma; \mathbb{C}^4) \rightarrow H^{1/2}(\Sigma; \mathbb{C}^4)
  \end{equation}
  admits a bounded extension $\widetilde{\mathcal{A}}: H^{-1/2}(\Sigma; \mathbb{C}^4) \rightarrow H^{1/2}(\Sigma; \mathbb{C}^4)$.
  This and \eqref{equation_M_square} show assertion~(iii) for $\lambda = 0$.
  
  Let now $\lambda \in \rho(A_0)$ be arbitrary. The identity~\eqref{equation_diff_m} yields for
  $\varphi \in H^{1/2}(\Sigma; \mathbb{C}^4)$
  \begin{equation} \label{equation_M_lambda_square}
    \begin{split}
      \left(M(\lambda)^2 - \frac{1}{4}\right) \varphi 
          &= \big( M(0) + \lambda \gamma(0)^* \gamma(\lambda) \big)^2 \varphi - \frac{1}{4} \varphi \\
      &= \left( M(0)^2 - \frac{1}{4} \right) \varphi + \lambda M(0) \gamma(0)^* \gamma(\lambda) \varphi
          + \lambda \gamma(0)^* \gamma(\lambda) M(0) \varphi \\
          &\qquad\qquad \qquad \qquad + \big(\lambda \gamma(0)^* \gamma(\lambda) \big)^2 \varphi.
    \end{split}
  \end{equation}
  From \eqref{equation_gamma_star} we get
  \begin{equation*}
    \ran \gamma(0)^* = \ran \big(\Gamma_1 A_0^{-1} \big) = H^{1/2}(\Sigma; \mathbb{C}^4).
  \end{equation*}
  Hence, the closed graph theorem implies that
  $\gamma(0)^*: L^2(\mathbb{R}^3; \mathbb{C}^4) \rightarrow H^{1/2}(\Sigma; \mathbb{C}^4)$ is continuous.
  Using item~(i) of this proposition we see that $\gamma(0)^* \gamma(\lambda)$ admits the continuous extension
  \begin{equation*}
    \gamma(0)^* \widetilde{\gamma}(\lambda): H^{-1/2}(\Sigma; \mathbb{C}^4) \rightarrow H^{1/2}(\Sigma; \mathbb{C}^4).
  \end{equation*}
  Moreover, since $M(0)$ has the continuous extension $\widetilde{M}(0)$ in $H^{-1/2}(\Sigma; \mathbb{C}^4)$ and
  $M(0)^2 - \frac{1}{4} I_4$ has a continuous extension from $H^{-1/2}(\Sigma; \mathbb{C}^4)$
  to $H^{1/2}(\Sigma; \mathbb{C}^4)$ by the previous considerations, equation~\eqref{equation_M_lambda_square} yields
  finally the statement of assertion~(iii) for all $\lambda \in \rho(A_0)$.
  
  Finally, since $\Sigma$ is compact, the embedding 
  $\iota: H^{1/2}(\Sigma; \mathbb{C}^4) \rightarrow H^{-1/2}(\Sigma; \mathbb{C}^4)$ is compact. Therefore, the mapping
  \begin{equation*}
    M(\lambda)^2 - \frac{1}{4} =  \left( \widetilde{M}(\lambda)^2 - \frac{1}{4} \right)\iota
  \end{equation*}
  is compact in $H^{1/2}(\Sigma; \mathbb{C}^4)$.
\end{proof}

Eventually, we provide a transformation of the quasi boundary triple 
from Theorem~\ref{theorem_triple} to an ordinary boundary triple.
This is an immediate consequence of Theorem~\ref{theorem_extended_boundary_triple_abstract}
for the special choice  $\iota_\pm = (I_4 - \Delta_\Sigma)^{\pm 1/4}$
and $\mu = 0 \in \rho(A_0)$,
but for the convenience of the reader we provide also a short direct proof.
In order to define the transformed boundary mappings recall that the direct sum decomposition
\begin{equation} \label{decomposition1}
  \dom T_{\rm max} = \dom A_0 \dot{+} \ker T_{\rm max}
\end{equation}
holds, as $0 \in \rho(A_0)$.

\begin{thm} \label{theorem_ordinary_triple}
  Let $S$ be the symmetric operator in \eqref{sss} with adjoint $S^* = T_{\rm max}$ in \eqref{def_T_m}. Moreover, let 
  $\{ L^2(\Sigma; \mathbb{C}^4), \Gamma_0, \Gamma_1 \}$ be the quasi boundary triple from Theorem~\ref{theorem_triple},
  let $\widetilde{\Gamma}_0$ be the extension of $\Gamma_0$ from Lemma~\ref{lemma_A_infty},
  and define the mappings $\Upsilon_0, \Upsilon_1: \dom T_{\rm max} \rightarrow L^2(\mathbb{R}^3; \mathbb{C}^4)$ by
  \begin{equation*}
    \Upsilon_0 f := (I_4 - \Delta_\Sigma)^{-1/4} \, \widetilde{\Gamma}_0 f\quad\text{and} \quad
    \Upsilon_1 f := (I_4 - \Delta_\Sigma)^{1/4} \Gamma_1 f_0,
  \end{equation*}
  where $f = f_0 + f_1 \in \dom A_0 \dot{+} \ker T_{\rm max} = \dom T_{\rm max}$.
  Then $\{ L^2(\Sigma; \mathbb{C}^4), \Upsilon_0, \Upsilon_1 \}$ is an ordinary boundary triple for $S^* = T_{\rm max}$ 
  with $S^* \upharpoonright \ker\Upsilon_0=T \upharpoonright \ker\Gamma_0 = A_0$.
\end{thm}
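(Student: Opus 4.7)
The plan is to verify that the abstract transformation result Theorem~\ref{theorem_extended_boundary_triple_abstract} applies here and then simply read off the conclusion, following the hint given just before the theorem. The two hypotheses of that abstract result are: (a) the space $\mathscr{G}_1$ from \eqref{def_G_0_G_1} is dense in the boundary Hilbert space, and (b) $\rho(A_0) \cap \mathbb{R} \neq \emptyset$. Both are already in hand: by Lemma~\ref{lemma_A_infty} we have $\mathscr{G}_1 = H^{1/2}(\Sigma;\mathbb{C}^4)$, which is dense in $L^2(\Sigma;\mathbb{C}^4)$; and by \eqref{equation_spectrum_A_0} we have $0 \in \rho(A_0) \cap \mathbb{R}$, justifying the choice $\mu = 0$ and the decomposition \eqref{decomposition1}.

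Next I would specify the two isomorphisms required in the abstract setup as $\iota_+ := (I_4 - \Delta_\Sigma)^{1/4}$ and $\iota_- := (I_4 - \Delta_\Sigma)^{-1/4}$, which, by the statements recalled in the Notations section, are bijective bounded operators from $H^{1/2}(\Sigma;\mathbb{C}^4)$ (respectively $H^{-1/2}(\Sigma;\mathbb{C}^4)$) onto $L^2(\Sigma;\mathbb{C}^4)$. The compatibility condition
\begin{equation*}
  \big(\iota_- x', \iota_+ x\big)_{L^2(\Sigma;\mathbb{C}^4)}
  = \big( (I_4 - \Delta_\Sigma)^{-1/4} x', (I_4 - \Delta_\Sigma)^{1/4} x \big)_\Sigma
  = (x', x)_{1/2 \times -1/2}
\end{equation*}
for $x \in H^{1/2}(\Sigma;\mathbb{C}^4)$, $x' \in H^{-1/2}(\Sigma;\mathbb{C}^4)$ is verbatim the definition of the duality pairing introduced in the Notations section. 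Plugging these data into Theorem~\ref{theorem_extended_boundary_triple_abstract} produces exactly the mappings $\Upsilon_0$, $\Upsilon_1$ of the theorem statement and hands over the full conclusion, including $S^* \upharpoonright \ker \Upsilon_0 = T \upharpoonright \ker \Gamma_0 = A_0$.

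For the direct proof promised in the statement, I would verify the three defining axioms of an ordinary boundary triple separately. Green's identity is obtained from the decomposition $f = f_0 + f_1$, $g = g_0 + g_1$ in \eqref{decomposition1}, the self-adjointness of $A_0$, and the extended pairing $(\Gamma_1 f_0, \widetilde{\Gamma}_0 g_1)_{1/2 \times -1/2}$, rewritten in terms of the chosen $\iota_\pm$ as the $L^2(\Sigma;\mathbb{C}^4)$-inner product of $\Upsilon_1 f$ and $\Upsilon_0 g$. Surjectivity of $\Upsilon = (\Upsilon_0, \Upsilon_1)^\top$ onto $L^2(\Sigma;\mathbb{C}^4)^2$ reduces, via the isomorphisms $\iota_\pm$, to the surjectivity of $\widetilde{\Gamma}_0 : \dom T_{\rm max} \to H^{-1/2}(\Sigma;\mathbb{C}^4)$ from Lemma~\ref{lemma_A_infty} and of $\Gamma_1 \upharpoonright \ker\Gamma_0$ onto $H^{1/2}(\Sigma;\mathbb{C}^4)$ from Theorem~\ref{theorem_triple}. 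Finally, $S^* \upharpoonright \ker\Upsilon_0 = A_0$ is immediate, since $\iota_-$ is injective so $\Upsilon_0 f = 0$ forces $\widetilde{\Gamma}_0 f = 0$, i.e. $f \in \dom A_0$. The only real subtlety — not an obstacle given the preparation in Lemma~\ref{lemma_A_infty} and Proposition~\ref{proposition_extension_gamma_Weyl} — is that the symmetric $L^2$-boundary form valid on $\dom T$ must be replaced on the larger domain $\dom T_{\rm max}$ by the asymmetric $H^{1/2}$--$H^{-1/2}$ duality pairing, and it is precisely the rescaling by $(I_4 - \Delta_\Sigma)^{\pm 1/4}$ that transports this pairing back to a genuine $L^2(\Sigma;\mathbb{C}^4)$-inner product as required by an ordinary boundary triple.
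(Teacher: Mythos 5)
Your proposal is correct and takes essentially the same route as the paper: the paper likewise observes that the theorem is an immediate consequence of Theorem~\ref{theorem_extended_boundary_triple_abstract} with $\iota_\pm=(I_4-\Delta_\Sigma)^{\pm1/4}$ and $\mu=0$, and then supplies a short direct verification of the three axioms along the lines you sketch. The one small difference in routing is in the direct check of Green's identity: you propose to work on $\dom T_{\rm max}$ directly with the $H^{1/2}$--$H^{-1/2}$ duality pairing, which silently invokes an extended Green's identity for $\widetilde{\Gamma}_0$ from \cite{BM14} that is not stated in the paper; the paper instead first verifies the identity for $f,g\in\dom T$ (where the $L^2(\Sigma)$ boundary form is available) and then extends to $\dom T_{\rm max}$ using the density from Lemma~\ref{lemma_core_T} together with the graph-norm continuity of $\widetilde{\Gamma}_0$ and $\Gamma_1$ from Lemma~\ref{lemma_A_infty}, which keeps the argument self-contained. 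Both variants are sound; the density route is the more elementary and is the one the paper commits to.
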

\begin{proof}
  First we verify that Green's identity is true. Assume that $f, g \in \dom T \subset \dom T_{\rm max}$
  and decompose these functions with respect to~\eqref{decomposition1} as 
  $f = f_0 + f_1$ and $g = g_0 + g_1$ with $f_0, g_0 \in \dom A_0$ and $f_1, g_1 \in \ker T$.
  Using the self-adjointness of the free Dirac operator $A_0$ and $T f_1 = T g_1 = 0$ we deduce then
  \begin{equation*}
    \begin{split}
      (T f, g)_{\mathbb{R}^3} - (f, T g)_{\mathbb{R}^3} 
          &= (A_0 f_0, g_0)_{\mathbb{R}^3} + (T f_0, g_1)_{\mathbb{R}^3} 
          - (f_0, A_0 g_0)_{\mathbb{R}^3} - (f_1, T g_0)_{\mathbb{R}^3} \\
      &= (T f_0, g_1)_{\mathbb{R}^3} - (f_0, T g_1)_{\mathbb{R}^3} 
            + (T f_1, g_0)_{\mathbb{R}^3} - (f_1, T g_0)_{\mathbb{R}^3}.
    \end{split}
  \end{equation*}
  Employing now Theorem~\ref{theorem_triple} and $\Gamma_0 f_0 = \Gamma_0 g_0 = 0$, as $f_0, g_0 \in \dom A_0 = \ker \Gamma_0$,
  we get
  \begin{equation*}
    \begin{split}
      (T f, g)_{\mathbb{R}^3} &- (f, T g)_{\mathbb{R}^3} \\
      &= (\Gamma_1 f_0, \Gamma_0 g_1)_{\Sigma} - (\Gamma_0 f_0, \Gamma_1 g_1)_{\Sigma}
          + (\Gamma_1 f_1, \Gamma_0 g_0)_{\Sigma} - (\Gamma_0 f_1, \Gamma_1 g_0)_{\Sigma} \\
      &= (\Gamma_1 f_0, \Gamma_0 g)_{\Sigma} - (\Gamma_0 f, \Gamma_1 g_0)_{\Sigma}.
    \end{split}
  \end{equation*}
  The self-adjointness of $(I_4 - \Delta_\Sigma)^{\pm 1/4}$ implies eventually
  \begin{equation*}
    \begin{split}
      (T f, g)_{\mathbb{R}^3} - (f, T g)_{\mathbb{R}^3} 
        &= \big((I_4 - \Delta_\Sigma)^{1/4} \Gamma_1 f_0, (I_4 - \Delta_\Sigma)^{-1/4} \Gamma_0 g\big)_{\Sigma} \\
            &\qquad - \big((I_4 - \Delta_\Sigma)^{-1/4} \Gamma_0 f, (I_4 - \Delta_\Sigma)^{1/4} \Gamma_1 g_0\big)_{\Sigma}\\
        &= (\Upsilon_1 f, \Upsilon_0 g)_{\Sigma} - (\Upsilon_0 f, \Upsilon_1 g)_{\Sigma}.
    \end{split}
  \end{equation*}
  Since $C^\infty(\overline{\Omega_+}; \mathbb{C}^4) \oplus C^\infty(\overline{\Omega_+}; \mathbb{C}^4) \subset \dom T$
  is dense in $\dom T_{\rm max}$ by Lemma~\ref{lemma_core_T} and $\widetilde{\Gamma}_0, \Gamma_1$ are continuous with 
  respect to the graph norm by Lemma~\ref{lemma_A_infty} we conclude that Green's identity holds for all 
  $f, g \in \dom T_{\rm max}$.
  
  To see that $(\Upsilon_0, \Upsilon_1)$ is surjective let $\varphi, \psi \in L^2(\Sigma; \mathbb{C}^4)$
  arbitrary, but fixed. Since $\ran \widetilde{\Gamma}_0 = H^{-1/2}(\Sigma; \mathbb{C}^4)$ 
  by Lemma~\ref{lemma_A_infty} there exists $g \in \dom \widetilde{\Gamma}_0 = \dom T_{\rm max}$ such that
  \begin{equation*}
    \Upsilon_0 g = (I_4 - \Delta_\Sigma)^{-1/4} \widetilde{\Gamma}_0 g = \varphi.
  \end{equation*}
  Next, choose some $h \in \dom A_0$ which satisfies
  \begin{equation*}
    \Upsilon_1 h = (I_4 - \Delta_\Sigma)^{1/4} \Gamma_1 h = \psi - \Upsilon_1 g.
  \end{equation*}
  Since $h \in \dom A_0 = \ker \Gamma_0$ we have, in particular, $\Upsilon_0 h = 0$.
  Thus, the function $f := g + h \in \dom T_{\rm max}$ fulfills $\Upsilon_0 f = \varphi$ and $\Upsilon_1 f = \psi$,
  which shows that $(\Upsilon_0, \Upsilon_1)$ is surjective.
  
  It remains to show that $T_{\rm max} \upharpoonright \ker \Upsilon_0$ is self-adjoint. 
  With the help of Green's identity, which is already proved, it is easy to see that 
  $T_{\rm max} \upharpoonright \ker \Upsilon_0$ is symmetric. Moreover, the self-adjoint free Dirac operator 
  is contained in $T_{\rm max} \upharpoonright \ker \Upsilon_0$. Thus, these operators must coincide.
  Therefore, the triple $\{ L^2(\Sigma; \mathbb{C}^4), \Upsilon_0, \Upsilon_1 \}$ fulfills all conditions to
  be an ordinary boundary triple for $S^* = T_{\rm max}$ in the sense of Definition~\ref{qbtdef} and the proof of this 
  theorem is finished.
\end{proof}

\section{Dirac operators with electrostatic $\delta$-shell interactions} \label{section_Dirac_delta}

In this section we define and investigate Dirac operators $A_{\eta}$ with $\delta$-shell interactions supported on the 
closed and bounded $C^2$-surface $\Sigma \subset \mathbb{R}^3$ with interaction strength $\eta \in \mathbb{R}$. 
In particular, we treat the case of the critical interaction strength $\eta = \pm 2$, 
for which self-adjointness and other spectral properties of $A_{\pm 2}$ were not obtained so far. The strategy is as follows:
Using the quasi boundary 
triple from Theorem~\ref{theorem_triple} and the transformed ordinary boundary triple from 
Theorem~\ref{theorem_ordinary_triple} with the corresponding transformed parameter 
$\Theta_1(\pm 2)$, cf. \eqref{transformed_parameter},
we identify the closure of $A_{\pm 2}$ with the closure of  $\Theta_1(\pm 2)$, which turns out to be self-adjoint in $L^2(\Sigma; \mathbb{C}^4)$.
Making use of the corresponding Weyl functions and by constructing suitable singular sequences 
we prove some spectral results for $\overline{A_{\pm 2}}$ 
in Theorem~\ref{proposition_properties_A_pm2}.

We start with the definition of Dirac operators with an electrostatic $\delta$-shell interaction 
of constant strength. 

\begin{definition} \label{definition_A_eta}
  Let $\{ L^2(\Sigma; \mathbb{C}^4), \Gamma_0, \Gamma_1 \}$ be the quasi boundary triple 
  from Theorem~\ref{theorem_triple} for $S^* = T_{\rm max}=\overline{T}$ with $T$ in \eqref{def_T} and let $\eta \in \mathbb{R}$. 
  Then the Dirac operator $A_\eta$ with  an electrostatic $\delta$-shell interaction 
  of strength $\eta$ is defined by
  \begin{equation} \label{def_A_eta}
    A_\eta := T \upharpoonright \ker(\Gamma_0 + \eta \Gamma_1),
  \end{equation}
  that is, 
  \begin{equation*}
    \begin{split}
      A_\eta f &= (-i \alpha \cdot \nabla + m \beta) f_+ \oplus (-i \alpha \cdot \nabla + m \beta) f_-, \\
      \dom A_\eta &= \big\{ f = f_+ \oplus f_- \in \dom T: 
          \tfrac{\eta}{2} (f_+|_\Sigma + f_-|_\Sigma) =- i \alpha \cdot \nu (f_+|_\Sigma - f_-|_\Sigma) \big\}.
    \end{split}
  \end{equation*}
\end{definition}

It follows immediately from \eqref{def_A_eta} and Green's identity
that $A_\eta$ is symmetric for any $\eta \in \mathbb{R}$.
In the following proposition we prove that $A_\eta$ is self-adjoint for $\eta \neq \pm 2$; 
similar results have been obtained in~\cite{AMV14, BEHL16_2},
but the approach used here also yields an additional regularity result for the  functions in $\dom A_\eta$.

\begin{prop} \label{proposition_A_eta_self_adjoint_nice}
  Let $\eta \in \mathbb{R} \setminus \{ \pm 2 \}$ and let $A_\eta$ be defined as in Definition~\ref{definition_A_eta}.
  Then $A_\eta$ is self-adjoint and $\dom A_\eta\subset H^1(\mathbb{R}^3\setminus \Sigma; \mathbb{C}^4)$.
\end{prop}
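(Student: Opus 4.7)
My plan is to rely on the quasi boundary triple $\{L^2(\Sigma;\mathbb{C}^4),\Gamma_0,\Gamma_1\}$ from Theorem~\ref{theorem_triple} together with the abstract Birman--Schwinger machinery in Theorem~\ref{theorem_Krein_abstract}. First note that the inclusion $\dom A_\eta\subset H^1(\mathbb{R}^3\setminus\Sigma;\mathbb{C}^4)$ is immediate from Definition~\ref{definition_A_eta}, because $\dom T=H^1(\mathbb{R}^3\setminus\Sigma;\mathbb{C}^4)$ by \eqref{def_T}. The case $\eta=0$ is trivial since $A_0$ is then the free Dirac operator, so I assume $\eta\in\mathbb{R}\setminus\{0,\pm 2\}$ and rewrite the boundary condition $\Gamma_0 f+\eta\Gamma_1 f=0$ as $\Gamma_1 f-\vartheta\Gamma_0 f=0$ with $\vartheta=-\tfrac{1}{\eta}I$. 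Then $A_\eta=A_\vartheta$ in the sense of \eqref{equation_def_extension}, and since the scalar parameter $\vartheta$ is symmetric, identity~\eqref{abab} already gives that $A_\eta$ is symmetric.

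To upgrade this to self-adjointness it suffices by Theorem~\ref{theorem_Krein_abstract}~(ii) (and the fact that $\pm i\notin\sigma_{\mathrm p}(A_\eta)$) to verify
\begin{equation*}
  \ran\gamma(\mp i)^*\subset\ran\bigl(-\tfrac{1}{\eta}I-M(\pm i)\bigr).
\end{equation*}
By \eqref{equation_gamma_star} and the surjectivity of the trace $H^1(\mathbb{R}^3;\mathbb{C}^4)\to H^{1/2}(\Sigma;\mathbb{C}^4)$ one has $\ran\gamma(\mp i)^*=H^{1/2}(\Sigma;\mathbb{C}^4)$, so it is enough to prove that $-\tfrac{1}{\eta}I-M(\pm i)$ is bijective on $H^{1/2}(\Sigma;\mathbb{C}^4)$. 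The crucial input for this is Proposition~\ref{proposition_extension_gamma_Weyl}~(iii), according to which $M(\pm i)^2-\tfrac{1}{4}I$ is compact in $H^{1/2}(\Sigma;\mathbb{C}^4)$. Writing
\begin{equation*}
  \bigl(-\tfrac{1}{\eta}I-M(\pm i)\bigr)\bigl(-\tfrac{1}{\eta}I+M(\pm i)\bigr)=\bigl(\tfrac{1}{\eta^2}-\tfrac{1}{4}\bigr)I-\bigl(M(\pm i)^2-\tfrac{1}{4}I\bigr),
\end{equation*}
the right-hand side is a nonzero multiple of the identity plus a compact operator precisely because $\eta\neq\pm 2$, hence Fredholm of index zero on $H^{1/2}(\Sigma;\mathbb{C}^4)$. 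Since the two factors on the left commute, this identity (together with the analogous one in the reverse order) furnishes both left and right parametrices modulo compact operators, from which I conclude that $-\tfrac{1}{\eta}I-M(\pm i)$ is itself Fredholm of index zero on $H^{1/2}(\Sigma;\mathbb{C}^4)$.

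It remains to obtain injectivity of this operator: if $\varphi\in H^{1/2}(\Sigma;\mathbb{C}^4)$ satisfies $(-\tfrac{1}{\eta}I-M(\pm i))\varphi=0$, then Theorem~\ref{theorem_Krein_abstract}~(i) yields $\gamma(\pm i)\varphi\in\ker(A_\eta\mp i)$, and symmetry of $A_\eta$ forces $\gamma(\pm i)\varphi=0$ and hence $\varphi=\Gamma_0\gamma(\pm i)\varphi=0$. Injectivity combined with Fredholm index zero gives bijectivity of $-\tfrac{1}{\eta}I-M(\pm i)$ on $H^{1/2}(\Sigma;\mathbb{C}^4)$, and the required range inclusion follows, completing the proof of self-adjointness of $A_\eta$. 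The obvious bottleneck in this scheme is the Fredholm step, which relies on $\tfrac{1}{\eta^2}-\tfrac{1}{4}\neq 0$; at the critical strengths $\eta=\pm 2$ this scalar vanishes, the product collapses to a purely compact operator, and the soft argument above breaks down entirely -- which is precisely the difficulty the rest of the paper is designed to overcome.
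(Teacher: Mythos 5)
Your overall strategy matches the paper's almost exactly: invoke Theorem~\ref{theorem_Krein_abstract}~(ii), reduce to a range inclusion for $-\tfrac{1}{\eta}I_4 - M(\pm i)$ on $H^{1/2}(\Sigma;\mathbb{C}^4)$, exploit the factorization
\begin{equation*}
\bigl(-\tfrac{1}{\eta}I_4-M(\pm i)\bigr)\bigl(-\tfrac{1}{\eta}I_4+M(\pm i)\bigr)=\bigl(\tfrac{1}{\eta^2}-\tfrac{1}{4}\bigr)I_4-\bigl(M(\pm i)^2-\tfrac{1}{4}I_4\bigr),
\end{equation*}
and use compactness of $M(\pm i)^2-\tfrac14 I_4$ from Proposition~\ref{proposition_extension_gamma_Weyl}~(iii) together with Fredholm's alternative. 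The injectivity argument via Theorem~\ref{theorem_Krein_abstract}~(i) and symmetry is also what the paper does (applied there to the product, for the same reason).

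However, there is a genuine logical gap in the Fredholm-theoretic detour. Having two-sided parametrices modulo compact operators shows that $-\tfrac{1}{\eta}I_4-M(\pm i)$ is Fredholm, but it does \emph{not} by itself give index zero; the unilateral shift on $\ell^2(\mathbb{N})$ has a two-sided parametrix modulo compacts yet index $-1$. What you can say is that the two commuting factors have indices summing to $\operatorname{ind}\bigl((\tfrac{1}{\eta^2}-\tfrac14)I_4-K\bigr)=0$, and since both are injective (by symmetry of $A_{\eta}$ and $A_{-\eta}$, respectively), each index is $\le 0$, forcing both to be zero. You would need to insert this step; as written, the claim ``Fredholm of index zero'' appears before you establish injectivity and is therefore unjustified. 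The paper sidesteps this entirely: it only needs the range inclusion, so it shows the \emph{product} is injective (hence, being a nonzero scalar plus compact, bijective by Fredholm's alternative) and immediately reads off
\begin{equation*}
H^{1/2}(\Sigma;\mathbb{C}^4)=\ran\Bigl(\bigl(\tfrac{1}{\eta^2}-\tfrac14\bigr)I_4+K(\lambda)\Bigr)\subset\ran\bigl(-\tfrac{1}{\eta}I_4-M(\lambda)\bigr),
\end{equation*}
never needing to establish that the single factor is Fredholm. Your proof is correct once the index computation is repaired, but the paper's route is both shorter and avoids the pitfall.
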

\begin{proof}
  Let $\eta \in \mathbb{R} \setminus \{ \pm 2 \}$ be fixed and assume $\eta\not=0$ (note that $A_0$ is the self-adjoint free Dirac operator).
  In order to show that the symmetric operator $A_\eta$ is self-adjoint we verify 
  $\ran(A_\eta - \lambda) = L^2(\mathbb{R}^3; \mathbb{C}^4)$ for all $\lambda \in \mathbb{C} \setminus \mathbb{R}$.
  For $\lambda \in \mathbb{C} \setminus \mathbb{R}$ we have $\lambda\not\in\sigma_{\rm p}(A_\eta)$ since $A_\eta$ is symmetric.
  Hence, by Theorem~\ref{theorem_Krein_abstract}~(ii) the operator $A_\eta - \lambda$ is surjective if 
  $\ran \gamma( \overline{\lambda} )^* \subset \ran\big( -\frac{1}{\eta} I_4 - M(\lambda) \big)$.
  Observe first that
  \begin{equation*}
    \ran \gamma ( \overline{\lambda})^* = \ran\big( \Gamma_1 (A_0 - \lambda)^{-1} \big) 
    = H^{1/2}(\Sigma; \mathbb{C}^4)
  \end{equation*}
  by \eqref{equation_gamma_star}. To show that $H^{1/2}(\Sigma; \mathbb{C}^4) \subset \ran\big(-\frac{1}{\eta} I_4 - M(\lambda)\big)$
  we note that
  \begin{equation}\label{plm}
   \left(-\frac{1}{\eta} I_4 - M(\lambda) \right) 
            \left(-\frac{1}{\eta} I_4 + M(\lambda) \right)=\left(\frac{1}{\eta^2} - \frac{1}{4} \right) I_4 + K(\lambda)
  \end{equation}
  with $K(\lambda) := \frac{1}{4} I_4 - M(\lambda)^2$.
  By Proposition~\ref{proposition_extension_gamma_Weyl}~(iii) the operator 
  $K(\lambda)$ is compact in $H^{1/2}(\Sigma; \mathbb{C}^4)$.
  Moreover \eqref{plm} is an injective operator as otherwise one of the symmetric 
  operators $A_{\pm \eta}$ would have the non-real eigenvalue $\lambda$; cf. Theorem~\ref{theorem_Krein_abstract}~(i).
  Thus, Fredholm's alternative and \eqref{plm} yield  
  \begin{equation*} 
      H^{1/2}(\Sigma; \mathbb{C}^4)= \ran \left(\left( \frac{1}{\eta^2} - \frac{1}{4} \right) I_4 + K(\lambda) \right)\subset  
      \ran \left( -\frac{1}{\eta} I_4 - M(\lambda) \right).
  \end{equation*}
  From this and the above considerations it follows that $A_\eta$ is self-adjoint for $\eta \in \mathbb{R} \setminus \{ \pm 2 \}$. The inclusion
  $\dom A_\eta\subset H^1(\mathbb{R}^3\setminus \Sigma; \mathbb{C}^4)$ is clear from \eqref{def_A_eta}.
\end{proof}

Now we turn our attention to the critical case $\eta=\pm 2$ in Definition~\ref{definition_A_eta}. 

\begin{prop}\label{propi}
The operators
\begin{equation}\label{xxx}
 A_{\pm 2}=T\upharpoonright \ker(\Gamma_0 \pm 2 \Gamma_1)=T\upharpoonright \ker\bigl(\Gamma_1\pm\tfrac{1}{2}\Gamma_0\bigr)
\end{equation}  
are symmetric in $L^2(\mathbb{R}^3; \mathbb{C}^4)$ but not self-adjoint.
\end{prop}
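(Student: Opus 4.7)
The plan is to treat symmetry and non-self-adjointness separately. In the notation of \eqref{equation_def_extension} one has $A_{\pm 2} = T \upharpoonright \ker(\Gamma_1 - \vartheta \Gamma_0)$ with the bounded self-adjoint parameter $\vartheta = \mp \tfrac{1}{2} I_4$ in $L^2(\Sigma;\mathbb{C}^4)$. Symmetry of $A_{\pm 2}$ is then immediate from the abstract Green's identity in \eqref{abab}, giving $(A_{\pm 2} f, g)_{\mathbb{R}^3} = (f, A_{\pm 2} g)_{\mathbb{R}^3}$ for all $f, g \in \dom A_{\pm 2}$.

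For the non-self-adjointness I would argue by contradiction, combining Theorem~\ref{theorem_Krein_abstract} with the compactness statement in Proposition~\ref{proposition_extension_gamma_Weyl}(iii). Fix $\lambda \in \mathbb{C} \setminus \mathbb{R}$ and suppose $A_{\pm 2}$ were self-adjoint. Symmetry rules out $\lambda$ as an eigenvalue, so Theorem~\ref{theorem_Krein_abstract}(i) forces $\ker(\mp\tfrac{1}{2} I_4 - M(\lambda)) = \{0\}$. Self-adjointness together with $\ran \gamma(\overline{\lambda})^* = H^{1/2}(\Sigma;\mathbb{C}^4)$ from \eqref{equation_gamma_star} and Theorem~\ref{theorem_Krein_abstract}(ii) forces $H^{1/2}(\Sigma;\mathbb{C}^4) \subset \ran(\mp\tfrac{1}{2} I_4 - M(\lambda))$. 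Since $M(\lambda)$ maps $H^{1/2}(\Sigma;\mathbb{C}^4)$ into itself by Proposition~\ref{proposition_gamma_Weyl}(ii), the range is also contained in $H^{1/2}(\Sigma;\mathbb{C}^4)$, and the open mapping theorem shows that $\mp\tfrac{1}{2} I_4 - M(\lambda)$ is a bijection of $H^{1/2}(\Sigma;\mathbb{C}^4)$ with bounded inverse.

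The heart of the argument is the algebraic identity $(\mp\tfrac{1}{2} I_4 - M(\lambda))(\pm\tfrac{1}{2} I_4 - M(\lambda)) = M(\lambda)^2 - \tfrac{1}{4} I_4$. Proposition~\ref{proposition_extension_gamma_Weyl}(iii) asserts that the right-hand side is compact in $H^{1/2}(\Sigma;\mathbb{C}^4)$, so multiplying by the bounded inverse of the first factor shows that $\pm\tfrac{1}{2} I_4 - M(\lambda)$ is compact on $H^{1/2}(\Sigma;\mathbb{C}^4)$, i.e., $M(\lambda) = \pm\tfrac{1}{2} I_4 + C$ for some compact operator $C$. I would then compute $M(\lambda)^{-1}$ in two different ways. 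First, $M(\lambda)^{-1}$ is a bounded operator on $H^{1/2}(\Sigma;\mathbb{C}^4)$ by \eqref{equation_M_inv} and the $H^{1/2}$-boundedness of $\alpha\cdot\nu$; rearranging $(\pm\tfrac{1}{2} I_4 + C) M(\lambda)^{-1} = I_4$ yields $M(\lambda)^{-1} = \pm 2 I_4 + C_1$ with $C_1$ compact. Second, substituting $M(\lambda) = \pm\tfrac{1}{2} I_4 + C$ into \eqref{equation_M_inv} and using $(\alpha\cdot\nu)^2 = I_4$ gives $M(\lambda)^{-1} = -4(\alpha\cdot\nu) M(\lambda)(\alpha\cdot\nu) = \mp 2 I_4 + C_2$ with $C_2$ compact. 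Equating the two expressions forces $\pm 4 I_4 = C_1 - C_2$, so the identity on the infinite-dimensional space $H^{1/2}(\Sigma;\mathbb{C}^4)$ would be compact, which is absurd.

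The main obstacle is choosing the correct functional-analytic setting: one must run the Birman--Schwinger principle in $H^{1/2}(\Sigma;\mathbb{C}^4)$ rather than in $L^2(\Sigma;\mathbb{C}^4)$, and then recognise that the smoothing property of $M(\lambda)^2 - \tfrac{1}{4} I_4$ from Proposition~\ref{proposition_extension_gamma_Weyl}(iii) combined with the involutive symmetry \eqref{equation_M_inv} of $M(\lambda)$ is incompatible with $\mp\tfrac{1}{2} I_4 - M(\lambda)$ being an isomorphism of $H^{1/2}(\Sigma;\mathbb{C}^4)$ -- this is precisely the extension-theoretic manifestation of the criticality of $\eta=\pm 2$.
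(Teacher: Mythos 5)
Your proof is correct and uses the same key ingredients as the paper's (symmetry from Green's identity; Theorem~\ref{theorem_Krein_abstract}~(ii) together with $\ran\gamma(\overline\lambda)^*=H^{1/2}(\Sigma;\mathbb C^4)$ to conclude that $\mp\tfrac12 I_4-M(\lambda)$ is a bounded bijection of $H^{1/2}(\Sigma;\mathbb C^4)$; compactness of $M(\lambda)^2-\tfrac14 I_4$ from Proposition~\ref{proposition_extension_gamma_Weyl}~(iii); and the anti-involution relation \eqref{equation_M_inv}). The two arguments diverge only in the endgame. The paper uses the algebraic identity $-2M(\lambda)\,\alpha\cdot\nu\,\bigl(\tfrac12+M(\lambda)\bigr)=-\bigl(-\tfrac12+M(\lambda)\bigr)\,\alpha\cdot\nu$ to transfer surjectivity from one factor $\mp\tfrac12 I_4-M(\lambda)$ to the other, and then observes that the compact operator $M(\lambda)^2-\tfrac14 I_4$, being a product of two bijections, would be surjective on the infinite-dimensional space $H^{1/2}(\Sigma;\mathbb C^4)$ -- a contradiction. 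You instead multiply the factorisation of $M(\lambda)^2-\tfrac14 I_4$ by the bounded inverse of the good factor to deduce that the bad factor is compact, i.e.\ $M(\lambda)=\pm\tfrac12 I_4+C$ with $C$ compact, and then feed this into \eqref{equation_M_inv} to compute $M(\lambda)^{-1}$ modulo compacts in two incompatible ways, giving $\pm 4 I_4$ as a difference of compact operators -- also a contradiction. Both routes are of the same length and rely on the same two nontrivial facts (Proposition~\ref{proposition_extension_gamma_Weyl}~(iii) and \eqref{equation_M_inv}); yours has the small aesthetic advantage of not needing the auxiliary surjectivity step for the second factor. One minor remark: in the last line the sign should read $\pm 4 I_4 = C_2-C_1$ rather than $C_1-C_2$, but this does not affect the contradiction.
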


\begin{proof} 
First, it follows from Green's identity (Definition~\ref{qbtdef}~(i)) that $A_{\pm 2}$ are both symmetric.
  Now assume that $A_2$ is self-adjoint; the same argument applies to $A_{-2}$.
  Then $\ran(A_2 - \lambda) = L^2(\mathbb{R}^3; \mathbb{C}^4)$ for any 
  $\lambda \in \mathbb{C} \setminus \mathbb{R}$ and hence Theorem~\ref{theorem_Krein_abstract}~(ii)
  yields
  \begin{equation} \label{equation_range_M_p}
    \ran \gamma( \overline{\lambda} )^* = H^{1/2}(\Sigma; \mathbb{C}^4) 
        \subset \ran\left( -\frac{1}{2} I_4 - M(\lambda) \right).
  \end{equation}
  Since $\lambda\not\in\sigma_{\rm p}(A_2)$ it follows that $-\frac{1}{2}I_4 - M(\lambda)$ 
  is bijective in $H^{1/2}(\Sigma; \mathbb{C}^4)$.
  
  We claim that \eqref{equation_range_M_p} also implies 
  $\ran\big( \frac{1}{2} I_4 - M(\lambda) \big) = H^{1/2}(\Sigma; \mathbb{C}^4)$.
  In fact, for $\varphi \in H^{1/2}(\Sigma; \mathbb{C}^4)$ we have 
  $-2 M(\lambda) \, \alpha \cdot \nu \, \varphi \in H^{1/2}(\Sigma; \mathbb{C}^4)$ since
  $\Sigma$ is $C^2$-smooth and $M(\lambda)$ maps $H^{1/2}(\Sigma; \mathbb{C}^4)$ into $H^{1/2}(\Sigma; \mathbb{C}^4)$; cf. 
  Proposition~\ref{proposition_gamma_Weyl}~(ii).
  By \eqref{equation_range_M_p} there exists $\psi \in H^{1/2}(\Sigma; \mathbb{C}^4)$ such that 
  \begin{equation*}
    -2 M(\lambda) \, \alpha \cdot \nu \, \varphi = \left( -\frac{1}{2} - M(\lambda) \right) \psi.
  \end{equation*}
  Applying on both sides $M(\lambda) \alpha \cdot \nu$ and using $(\alpha \cdot \nu)^2 = I_4$ and 
  $(M(\lambda) \alpha \cdot \nu)^2 = -\frac{1}{4} I_4$ (see~\eqref{equation_M_inv}) 
  we find
  \begin{equation*}
    \frac{1}{2} \varphi = -2 (M(\lambda) \alpha \cdot \nu)^2 \varphi
    = \left( -\frac{1}{2} M(\lambda) \alpha \cdot \nu - M(\lambda) \alpha \cdot \nu M(\lambda) (\alpha \cdot \nu)^2 \right)
    \psi,
  \end{equation*}
  which is equivalent to $\varphi = \big( \frac{1}{2} - M(\lambda) \big)\, \alpha \cdot \nu\, \psi$,
  i.e. $\varphi \in \ran \big( \frac{1}{2} I_4 - M(\lambda) \big)$. We have shown 
  $\ran \big( \frac{1}{2} I_4 - M(\lambda) \big) = H^{1/2}(\Sigma, \mathbb{C}^4)$, and as 
  $\lambda \not\in \sigma_{\rm p}(A_{-2})$ it follows that $\frac{1}{2} I_4 - M(\lambda)$ 
  is bijective in $H^{1/2}(\Sigma; \mathbb{C}^4)$.
  
  Since $-\frac{1}{2} I_4 - M(\lambda)$ and $\frac{1}{2} I_4 - M(\lambda)$ are both bijective on $H^{1/2}(\Sigma; \mathbb{C}^4)$ 
  also $M(\lambda)^2 - \frac{1}{4} I_4$ is bijective on $H^{1/2}(\Sigma; \mathbb{C}^4)$.
  On the other hand, $M(\lambda)^2 - \frac{1}{4} I_4$ is compact in $H^{1/2}(\Sigma; \mathbb{C}^4)$ by Proposition~\ref{proposition_extension_gamma_Weyl}~(iii).
  Hence, this operator can not be surjective; a contradiction.
  Therefore $A_2$ is not self-adjoint in $L^2(\mathbb{R}^3; \mathbb{C}^4)$.
\end{proof}

In the following we complement Proposition~\ref{propi}, show that $A_{\pm 2}$ in \eqref{xxx} is essentially self-adjoint
and determine the closure $\overline{A_{\pm 2} }$.
For this we shall also  consider  the ordinary boundary triple 
$\{ L^2(\Sigma; \mathbb{C}^4), \Upsilon_0, \Upsilon_1 \}$ in Theorem~\ref{theorem_ordinary_triple} with $\gamma$-field 
\begin{equation}\label{gammm}
 \beta(\lambda)=\widetilde\gamma(\lambda) (I_4 - \Delta_\Sigma)^{1/4},\qquad\lambda\in\rho(A_0),
\end{equation}
and Weyl function
\begin{equation} \label{def_M_OBT}
 \mathcal M(\lambda)=(I_4 - \Delta_\Sigma)^{1/4}\bigl(\widetilde M(\lambda)-\widetilde M(0) \bigr)(I_4 - \Delta_\Sigma)^{1/4},\qquad\lambda\in\rho(A_0).
\end{equation}
The corresponding parameter (here $\vartheta=\mp\tfrac{1}{2}$ by \eqref{xxx})
in Corollary~\ref{corollary_self_adjoint_transformed_triple}  is then given by
\begin{equation*}
  \begin{split}
    \Theta_1(\pm 2) &:= -(I_4 - \Delta_\Sigma)^{1/4} \left( \pm \frac{1}{2} + M(0) \right) (I_4 - \Delta_\Sigma)^{1/4}, \\
    \dom \Theta_1(\pm 2) &:= H^1(\Sigma; \mathbb{C}^4),
  \end{split}
\end{equation*}
that is, 
\begin{equation}\label{xxxx}
A_{\pm 2}= T_{\rm max}\upharpoonright\ker\bigl(\Upsilon_1 - \Theta_1(\pm 2)\Upsilon_0\bigr).
\end{equation}
Clearly, $\Theta_1(\pm 2)$
is symmetric in $L^2(\Sigma; \mathbb{C}^4)$.
Our next goal is to prove that $\Theta_1(\pm 2)$ is essentially self-adjoint and
that $\overline{\Theta_1(\pm 2)}$ coincides with the maximal operator 
\begin{equation} \label{def_theta_max}
  \begin{split}
    \Theta_{\rm max}(\pm 2) \varphi &:= -(I_4 - \Delta_\Sigma)^{1/4} 
       \left(\pm \frac{1}{2} + \widetilde{M}(0) \right) (I_4 - \Delta_\Sigma)^{1/4} \varphi,\\
    \dom \Theta_{\rm max}(\pm 2) &:= \left\{ \varphi \in L^2(\Sigma; \mathbb{C}^4):\! \left(
        \pm \frac{1}{2} + \widetilde{M}(0) \right) (I_4 - \Delta_\Sigma)^{1/4} \varphi
        \in H^{1/2}(\Sigma; \mathbb{C}^4) \right\}\!.
  \end{split}
\end{equation}

\begin{lem} \label{proposition_Theta_1}
  The operator $\Theta_1(\pm 2)$ is essentially self-adjoint in $L^2(\Sigma; \mathbb{C}^4)$ and 
  $\overline{\Theta_1(\pm 2)} = \Theta_{\rm max}(\pm 2)$.
  In particular, $\Theta_{\rm max}(\pm 2)$ is self-adjoint.
\end{lem}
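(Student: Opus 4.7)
The plan is: (1) show the inclusion $\Theta_1(\pm 2)\subset\Theta_{\rm max}(\pm 2)$; (2) compute $(\Theta_1(\pm 2))^*=\Theta_{\rm max}(\pm 2)$; (3) verify that $\Theta_{\rm max}(\pm 2)$ is symmetric. Once symmetry is established, the chain $\Theta_{\rm max}(\pm 2)\subset(\Theta_{\rm max}(\pm 2))^*=\overline{\Theta_1(\pm 2)}\subset\Theta_{\rm max}(\pm 2)$ collapses to equalities, yielding $\overline{\Theta_1(\pm 2)}=\Theta_{\rm max}(\pm 2)$ self-adjoint. Writing $J:=(I_4-\Delta_\Sigma)^{1/4}$, step (1) is immediate: $J$ maps $H^1(\Sigma;\mathbb{C}^4)$ bijectively onto $H^{1/2}(\Sigma;\mathbb{C}^4)$, and $M(0)$ preserves $H^{1/2}(\Sigma;\mathbb{C}^4)$ by Proposition~\ref{proposition_gamma_Weyl}(ii), so $(\pm\tfrac{1}{2}+M(0))J\varphi\in H^{1/2}(\Sigma;\mathbb{C}^4)$ for $\varphi\in H^1(\Sigma;\mathbb{C}^4)$, placing $\varphi$ in $\dom\Theta_{\rm max}(\pm 2)$ since $\widetilde{M}(0)$ extends $M(0)$.

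For step (2), take $\varphi\in H^1(\Sigma;\mathbb{C}^4)$ and $\psi\in L^2(\Sigma;\mathbb{C}^4)$. Using self-adjointness of $J$ on $L^2(\Sigma;\mathbb{C}^4)$ with its continuous extension $J\colon L^2\to H^{-1/2}$ and the compatibility $(Jh,g)_\Sigma=(h,Jg)_{1/2\times-1/2}$ valid for $h\in H^{1/2}(\Sigma;\mathbb{C}^4)$, $g\in L^2(\Sigma;\mathbb{C}^4)$, one obtains
\begin{equation*}
(\Theta_1(\pm 2)\varphi,\psi)_\Sigma = -((\pm\tfrac{1}{2}+M(0))J\varphi,J\psi)_{1/2\times-1/2} = -(J\varphi,(\pm\tfrac{1}{2}+\widetilde{M}(0))J\psi)_{1/2\times-1/2},
\end{equation*}
where the last equality applies the duality identity of Proposition~\ref{proposition_extension_gamma_Weyl}(ii) with $J\varphi\in H^{1/2}(\Sigma;\mathbb{C}^4)$ and $J\psi\in H^{-1/2}(\Sigma;\mathbb{C}^4)$. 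Since $J\colon H^1\to H^{1/2}$ is a bijection with $\|J^{-1}\eta\|_{L^2}=\|\eta\|_{H^{-1/2}}$, the $H^{1/2}$-$H^{-1/2}$ duality shows that this functional of $\varphi$ is $L^2$-continuous precisely when $(\pm\tfrac{1}{2}+\widetilde{M}(0))J\psi\in H^{1/2}(\Sigma;\mathbb{C}^4)$, i.e., $\psi\in\dom\Theta_{\rm max}(\pm 2)$; in that case the functional equals $(\varphi,\Theta_{\rm max}(\pm 2)\psi)_\Sigma$, giving $(\Theta_1(\pm 2))^*=\Theta_{\rm max}(\pm 2)$.

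Step (3), the symmetry of $\Theta_{\rm max}(\pm 2)$, is the main obstacle. For $\varphi,\psi\in\dom\Theta_{\rm max}(\pm 2)$, the same manipulation as in step (2) reduces the desired identity $(\Theta_{\rm max}(\pm 2)\varphi,\psi)_\Sigma=(\varphi,\Theta_{\rm max}(\pm 2)\psi)_\Sigma$ to
\begin{equation*}
((\pm\tfrac{1}{2}+\widetilde{M}(0))J\varphi,J\psi)_{1/2\times-1/2} = \overline{((\pm\tfrac{1}{2}+\widetilde{M}(0))J\psi,J\varphi)_{1/2\times-1/2}},
\end{equation*}
where now $J\varphi,J\psi$ are only constrained to lie in $H^{-1/2}(\Sigma;\mathbb{C}^4)$ with $(\pm\tfrac{1}{2}+\widetilde{M}(0))J\varphi$ and $(\pm\tfrac{1}{2}+\widetilde{M}(0))J\psi$ in $H^{1/2}(\Sigma;\mathbb{C}^4)$. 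The identity is immediate when $J\varphi,J\psi\in H^{1/2}(\Sigma;\mathbb{C}^4)$ (equivalently, $\varphi,\psi\in H^1(\Sigma;\mathbb{C}^4)$) from the $L^2$-self-adjointness of $M(0)$; its extension to the larger class will rely on a density argument that exploits the bounded smoothing $\widetilde{M}(0)^2-\tfrac{1}{4}\colon H^{-1/2}(\Sigma;\mathbb{C}^4)\to H^{1/2}(\Sigma;\mathbb{C}^4)$ provided by Proposition~\ref{proposition_extension_gamma_Weyl}(iii). This density is the technically most delicate point of the proof.
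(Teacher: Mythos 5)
Your overall plan is logically sound: establishing $\Theta_1(\pm 2)\subset\Theta_{\rm max}(\pm 2)$, $(\Theta_1(\pm 2))^*=\Theta_{\rm max}(\pm 2)$, and the symmetry of $\Theta_{\rm max}(\pm 2)$ does yield the claim, and Steps (1) and (2) are carried out correctly. In fact, your Step (2) gives a somewhat cleaner argument than the paper does at the corresponding stage: where the paper first proves closedness of $\Theta_{\rm max}(\pm 2)$ (Step 1) and then only the inclusion $\Theta_1(\pm 2)^*\subset\Theta_{\rm max}(\pm 2)$ (Step 3), you obtain the full identity $\Theta_1(\pm 2)^*=\Theta_{\rm max}(\pm 2)$ in one stroke from the characterization of $L^2$-boundedness of the sesquilinear functional, and closedness of $\Theta_{\rm max}(\pm 2)$ then comes for free since it is an adjoint.

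However, your Step (3) — the symmetry of $\Theta_{\rm max}(\pm 2)$ — is not actually proved, and that is precisely the crux of the whole lemma. Once $\Theta_1^*=\Theta_{\rm max}$ is known, symmetry of $\Theta_{\rm max}$ is equivalent to $\Theta_{\rm max}\subset\Theta_{\rm max}^*=\Theta_1^{**}=\overline{\Theta_1}$, i.e. to the graph-norm density of $H^1(\Sigma;\mathbb{C}^4)$ in $\dom\Theta_{\rm max}(\pm 2)$. You acknowledge that ``a density argument'' is needed and that the smoothing $\widetilde{M}(0)^2-\tfrac14\colon H^{-1/2}\to H^{1/2}$ will be the key ingredient, but you do not exhibit the approximating sequence. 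This cannot be waved through: the naive choice $\psi_n\in H^1$ with $\psi_n\to\varphi$ in $L^2$ does \emph{not} give $(\tfrac12+\widetilde{M}(0))(I_4-\Delta_\Sigma)^{1/4}\psi_n\to(\tfrac12+\widetilde{M}(0))(I_4-\Delta_\Sigma)^{1/4}\varphi$ in $H^{1/2}$, only in $H^{-1/2}$, so $\Theta_{\rm max}(\pm 2)\psi_n$ need not converge in $L^2$. The paper's proof closes this gap by introducing the corrected sequence
\begin{equation*}
  \varphi_n := \varphi + (I_4-\Delta_\Sigma)^{-1/4}\left(\widetilde{M}(0)-\frac12\right)(I_4-\Delta_\Sigma)^{1/4}(\varphi-\psi_n)\in H^1(\Sigma;\mathbb{C}^4),
\end{equation*}
for which $\Theta_{\rm max}(2)(\varphi-\varphi_n)=(I_4-\Delta_\Sigma)^{1/4}\bigl(\widetilde{M}(0)^2-\tfrac14\bigr)(I_4-\Delta_\Sigma)^{1/4}(\varphi-\psi_n)$; it is exactly the factorization through the smoothing operator that makes this converge to zero in $L^2$. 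Without such a construction (or an equivalent substitute) your Step (3) is a restatement of the difficulty rather than a proof, so the proposal as written is incomplete.
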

\begin{proof}
  We  prove the statement for  $\eta = 2$, the case $\eta = -2$ is analogous. 
  For the convenience of the reader we divide the proof in three steps.
  
  \noindent
  {\it Step 1.} We check first that $\Theta_{\rm{max}}(2)$ is closed. For this let
  $(\varphi_n) \subset \dom \Theta_{\rm max}(2)$ such that $\varphi_n \rightarrow \varphi \in L^2(\Sigma; \mathbb{C}^4)$
  and $\Theta_{\rm max}(2) \varphi_n \rightarrow \psi \in L^2(\Sigma; \mathbb{C}^4)$ as $n \rightarrow \infty$. Since 
  $(I_4 - \Delta_\Sigma)^{-1/4}: L^2(\Sigma; \mathbb{C}^4) \rightarrow H^{1/2}(\Sigma; \mathbb{C}^4)$ 
  is an isomorphism we find 
  \begin{equation} \label{equation_convergence1}
    -\left( \frac{1}{2} + \widetilde{M}(0) \right) (I_4 - \Delta_\Sigma)^{1/4} \varphi_n 
        \rightarrow (I_4 - \Delta_\Sigma)^{-1/4} \psi, \quad n \rightarrow \infty,
  \end{equation}
  with respect to the $H^{1/2}$-norm, and hence also with respect to the $H^{-1/2}$-norm.
  On the other hand, since $(I_4 - \Delta_\Sigma)^{1/4}: L^2(\Sigma; \mathbb{C}^4) \rightarrow H^{-1/2}(\Sigma; \mathbb{C}^4)$
  and $\widetilde{M}(0)$ is continuous in $H^{-1/2}(\Sigma; \mathbb{C}^4)$ by 
  Proposition~\ref{proposition_extension_gamma_Weyl}~(ii) we obtain
  \begin{equation*}
    -\left( \frac{1}{2} + \widetilde{M}(0) \right) (I_4 - \Delta_\Sigma)^{1/4} \varphi_n 
        \rightarrow -\left( \frac{1}{2} + \widetilde{M}(0) \right) (I_4 - \Delta_\Sigma)^{1/4} \varphi, 
        \quad n \rightarrow \infty,
  \end{equation*}
  with respect to the $H^{-1/2}$-norm. Combining this with \eqref{equation_convergence1} the last observation leads to
  \begin{equation*}
    -\left( \frac{1}{2} + \widetilde{M}(0) \right) (I_4 - \Delta_\Sigma)^{1/4} \varphi 
        = (I_4 - \Delta_\Sigma)^{-1/4} \psi \in H^{1/2}(\Sigma; \mathbb{C}^4).
  \end{equation*}
  Therefore, $\varphi \in \dom \Theta_{\rm max}(2)$ and $\Theta_{\rm max}(2) \varphi = \psi$.
  Thus, $\Theta_{\rm{max}}(2)$ is closed.

  \noindent
  {\it Step 2.}
  Let us now show the inclusion
  \begin{equation}\label{mkmk}
   \Theta_{\rm max}(2) \subset \overline{\Theta_1(2)};
  \end{equation}
  together with $\Theta_1(2) \subset \Theta_{\rm max}(2)$ and $\Theta_{\rm{max}}(2)$ closed from {\it Step 1} this  yields 
  \begin{equation}\label{qqq}
  \Theta_{\rm max}(2)= \overline{\Theta_1(2)}. 
  \end{equation}
  To prove \eqref{mkmk} let $\varphi \in \dom \Theta_{\rm max}(2)$ and choose a sequence $(\psi_n) \subset H^1(\Sigma; \mathbb{C}^4)$
  such that $\psi_n \rightarrow \varphi$ in $L^2(\Sigma; \mathbb{C}^4)$ as $n \rightarrow \infty$.
  We define
  \begin{equation*}
    \varphi_n := \varphi 
        + (I_4 - \Delta_\Sigma)^{-1/4} \left( \widetilde{M}(0) - \frac{1}{2} \right) (I_4 - \Delta_\Sigma)^{1/4} (\varphi - \psi_n).
  \end{equation*}
  It follows from 
  \begin{equation*}
    \begin{split}
      \varphi_n 
      &= (I_4 - \Delta_\Sigma)^{-1/4} \left( \frac{1}{2}+ \widetilde{M}(0) \right) (I_4 - \Delta_\Sigma)^{1/4} \varphi \\
      &\qquad\qquad + (I_4 - \Delta_\Sigma)^{-1/4} \left( \frac{1}{2} - M(0) \right) (I_4 - \Delta_\Sigma)^{1/4} \psi_n,
    \end{split}
  \end{equation*}
  $\big( \frac{1}{2} + \widetilde{M}(0) \big) (I_4 - \Delta_\Sigma)^{1/4} \varphi \in H^{1/2}(\Sigma; \mathbb{C}^4)$ for $\varphi \in \dom \Theta_{\rm max}(2)$, 
  Proposition~\ref{proposition_gamma_Weyl} and the mapping properties of $(I_4 - \Delta_\Sigma)^{-1/4}$ that
  $\varphi_n\in H^1(\Sigma; \mathbb{C}^4) = \dom \Theta_1(2)$.
  Moreover, since $\widetilde{M}(0)$ is continuous in $H^{-1/2}(\Sigma; \mathbb{C}^4)$ we obtain 
  \begin{equation*}
    \varphi_n - \varphi 
        = (I_4 - \Delta_\Sigma)^{-1/4} \left( \widetilde{M}(0) - \frac{1}{2} \right) (I_4 - \Delta_\Sigma)^{1/4} (\varphi - \psi_n)
        \rightarrow 0, \quad n \rightarrow \infty,
  \end{equation*}
  in $L^2(\Sigma; \mathbb{C}^4)$.
  Finally, since $\widetilde{M}(0)^2 - \frac{1}{4} I_4: H^{-1/2}(\Sigma; \mathbb{C}^4) \rightarrow H^{1/2}(\Sigma; \mathbb{C}^4)$
  is continuous by Proposition~\ref{proposition_extension_gamma_Weyl}~(iii) we have
  \begin{equation*}
    \Theta_{\rm max}(2) (\varphi - \varphi_n) = (I_4 - \Delta_\Sigma)^{1/4} \left( \widetilde{M}(0)^2 - \frac{1}{4} \right) 
        (I_4 - \Delta_\Sigma)^{1/4} (\varphi - \psi_n) \rightarrow 0, \quad n \rightarrow \infty,
  \end{equation*}
  in $L^2(\Sigma; \mathbb{C}^4)$. In particular, as $\Theta_1(2)\subset \Theta_{\rm max}(2)$ 
  we have $\Theta_1(2)\varphi_n\rightarrow \Theta_{\rm max}(2)\varphi$ as $n \rightarrow \infty$,
and hence $\varphi \in \dom \overline{\Theta_1(2)}$ and $\overline{\Theta_1(2)}\varphi=\Theta_{\rm max}(2)\varphi$, i.e. \eqref{mkmk} holds. 
  
  \noindent
  {\it Step 3.} Since $\Theta_1(2)$ is symmetric it follows from \eqref{qqq} that $\overline{\Theta_1(2)}=\Theta_{\rm max}(2)$ is a symmetric operator.
  It is also clear from \eqref{qqq} that $\Theta_1(2)^*=\Theta_{\rm max}(2)^*$.
  In order to conclude that $\Theta_{\rm max}(2)$ is self-adjoint it suffices to show the inclusion
  \begin{equation}\label{hhh}
   \Theta_1(2)^* \subset \Theta_{\rm max}(2).
  \end{equation}
  For this
  let $\psi \in \dom \Theta_1(2)^*$ and $\varphi \in H^1(\Sigma; \mathbb{C}^4) = \dom \Theta_1(2)$.
  Making  use of \eqref{show} we compute
  \begin{equation*}
    \begin{split}
      \big( \Theta_1(2)^* \psi, \varphi \big)_\Sigma
          &= \big( \psi, \Theta_1(2) \varphi \big)_\Sigma \\
      &= \Big( \psi, -(I_4 - \Delta_\Sigma)^{1/4} 
          \left(\tfrac{1}{2} + M(0) \right) (I_4 - \Delta_\Sigma)^{1/4} \varphi \Big)_\Sigma \\
      &= \Big( (I_4 - \Delta_\Sigma)^{1/4} \psi,  
          -\left(\tfrac{1}{2} + M(0) \right) (I_4 - \Delta_\Sigma)^{1/4} \varphi \Big)_{-1/2 \times 1/2} \\
      &= \Big( -\left(\tfrac{1}{2} + \widetilde{M}(0) \right) (I_4 - \Delta_\Sigma)^{1/4} \psi,  
           (I_4 - \Delta_\Sigma)^{1/4} \varphi \Big)_{-1/2 \times 1/2} \\
       &= \Big( -(I_4 - \Delta_\Sigma)^{-1/4} \left(\tfrac{1}{2} + \widetilde{M}(0) \right) (I_4 - \Delta_\Sigma)^{1/4} \psi,  
           (I_4 - \Delta_\Sigma)^{1/2} \varphi \Big)_\Sigma.
    \end{split}
  \end{equation*}
  Since this is true for any 
  $\varphi \in H^1(\Sigma; \mathbb{C}^4) = \dom(I_4 - \Delta_\Sigma)^{1/2}$
  we conclude 
  $$-(I_4 - \Delta_\Sigma)^{-1/4} \left(\tfrac{1}{2} + \widetilde{M}(0) \right) (I_4 - \Delta_\Sigma)^{1/4} \psi
  \in \dom\big((I_4 - \Delta_\Sigma)^{1/2} \big)^* = H^1(\Sigma; \mathbb{C}^4)$$ and 
  using the self-adjointness of $(I_4 - \Delta_\Sigma)^{1/2}$ we find 
  \begin{equation*}
    -(I_4 - \Delta_\Sigma)^{1/2} \Big((I_4 - \Delta_\Sigma)^{-1/4} 
        \left(\tfrac{1}{2} + \widetilde{M}(0) \right) (I_4 - \Delta_\Sigma)^{1/4} \psi \Big)
        = \Theta_1(2)^* \psi.
  \end{equation*}
  This implies  $\psi \in \dom \Theta_{\rm max}(2)$ and $\Theta_1(2)^* \psi = \Theta_{\rm max}(2) \psi$, and hence \eqref{hhh} holds.
\end{proof}

In the following theorem we conclude that the symmetric operator $A_{\pm 2}$ is essentially self-adjoint
and provide the domain of its closure, which is the proper self-adjoint realization of the Dirac
operator with an electrostatic $\delta$-shell interaction of strength~$\pm 2$.
For that recall the definitions of the maximal Dirac operator $T_{\rm max}$ from \eqref{def_T_m},
the extended boundary mappings $\widetilde{\Gamma}_0, \widetilde{\Gamma}_1$ in
Lemma~\ref{lemma_A_infty}, and of the ordinary boundary triple 
$\{ L^2(\Sigma; \mathbb{C}^4), \Upsilon_0, \Upsilon_1\}$  in 
Theorem~\ref{theorem_ordinary_triple}.

\begin{thm} \label{theorem_self_adjoint_bad}
  The operator $A_{\pm 2}$ in~\eqref{def_A_eta} is essentially self-adjoint in $L^2(\mathbb{R}^3; \mathbb{C}^4)$ and
  the self-adjoint closure is given by 
  \begin{equation} \label{def_A_2_closure}
    \overline{A_{\pm 2}} = T_{\rm max} \upharpoonright \ker\big( \Upsilon_1 - \Theta_{\rm max}(\pm 2) \Upsilon_0 \big)
        = T_{\rm max} \upharpoonright \ker\big( \widetilde{\Gamma}_0 \pm 2 \widetilde{\Gamma}_1 \big).
  \end{equation}
  Furthermore, $A_{\pm 2} \subsetneq \overline{A_{\pm 2}}$ and $\dom \overline{A_{\pm 2}} \not\subset H^1(\mathbb{R}^3 \setminus \Sigma; \mathbb{C}^4)$.
\end{thm}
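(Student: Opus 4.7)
The proof has three ingredients: essential self-adjointness, identification of the two descriptions of the closure, and strictness of the inclusion together with the non-$H^1$ regularity.

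For essential self-adjointness I would invoke the ordinary boundary triple $\{L^2(\Sigma;\mathbb{C}^4),\Upsilon_0,\Upsilon_1\}$ from Theorem~\ref{theorem_ordinary_triple} together with the identification \eqref{xxxx}, i.e.\ $A_{\pm 2}=T_{\rm max}\upharpoonright\ker(\Upsilon_1-\Theta_1(\pm 2)\Upsilon_0)$. Since Lemma~\ref{proposition_Theta_1} tells us that the boundary parameter $\Theta_1(\pm 2)$ is essentially self-adjoint in $L^2(\Sigma;\mathbb{C}^4)$ with closure $\Theta_{\rm max}(\pm 2)$, Corollary~\ref{corollary_self_adjoint_transformed_triple} yields essential self-adjointness of $A_{\pm 2}$. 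For the first equality in \eqref{def_A_2_closure}, the operator $T_{\rm max}\upharpoonright\ker(\Upsilon_1-\Theta_{\rm max}(\pm 2)\Upsilon_0)$ is self-adjoint (Proposition~\ref{proposition_self_adjoint_abstract}, applied to the self-adjoint $\Theta_{\rm max}(\pm 2)$) and obviously extends $A_{\pm 2}$, hence it coincides with $\overline{A_{\pm 2}}$.

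The second equality in \eqref{def_A_2_closure} is the main technical step. I would decompose $f\in\dom T_{\rm max}$ as $f=f_0+f_1$ with $f_0\in\dom A_0$, $f_1\in\ker T_{\rm max}$, and compute:
\begin{equation*}
 \Upsilon_0 f = (I_4-\Delta_\Sigma)^{-1/4}\widetilde\Gamma_0 f_1,\quad \Upsilon_1 f=(I_4-\Delta_\Sigma)^{1/4}\Gamma_1 f_0,
\end{equation*}
using $\widetilde\Gamma_0 f_0=0$. By Proposition~\ref{proposition_extension_gamma_Weyl} one has $\widetilde\Gamma_1 f_1=\widetilde M(0)\widetilde\Gamma_0 f_1$, so that
\begin{equation*}
\widetilde\Gamma_0 f\pm 2\widetilde\Gamma_1 f = 2\Bigl[\Bigl(\tfrac{1}{2}\pm \widetilde M(0)\Bigr)\widetilde\Gamma_0 f_1\pm\Gamma_1 f_0\Bigr].
\end{equation*}
The equation $\Upsilon_1 f=\Theta_{\rm max}(\pm 2)\Upsilon_0 f$, after stripping off the injective factor $(I_4-\Delta_\Sigma)^{1/4}$, gives exactly the same identity; moreover, since $\Gamma_1 f_0\in H^{1/2}(\Sigma;\mathbb{C}^4)$ this identity automatically guarantees $\Upsilon_0 f\in\dom\Theta_{\rm max}(\pm 2)$ through the defining condition of $\Theta_{\rm max}(\pm 2)$ in \eqref{def_theta_max}. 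Thus the two kernels coincide. I expect this matching of conditions — in particular verifying that the regularity requirement hidden in $\dom\Theta_{\rm max}(\pm 2)$ is automatic — to be the main obstacle, though it is purely bookkeeping.

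Strict inclusion $A_{\pm 2}\subsetneq\overline{A_{\pm 2}}$ is then immediate: we have essential self-adjointness from Step~1, and Proposition~\ref{propi} asserts that $A_{\pm 2}$ itself is not self-adjoint. For the failure of $H^1$-regularity I would argue by contradiction: if $\dom\overline{A_{\pm 2}}\subset H^1(\mathbb{R}^3\setminus\Sigma;\mathbb{C}^4)$, then for any $f\in\dom\overline{A_{\pm 2}}$ the traces $f_\pm|_\Sigma$ lie in $H^{1/2}(\Sigma;\mathbb{C}^4)$, so $\widetilde\Gamma_0 f=\Gamma_0 f$ and $\widetilde\Gamma_1 f=\Gamma_1 f$ and the jump condition $\widetilde\Gamma_0 f\pm 2\widetilde\Gamma_1 f=0$ collapses to the original $H^{1/2}$-jump condition defining $A_{\pm 2}$; this would force $\overline{A_{\pm 2}}=A_{\pm 2}$, contradicting the strict inclusion just established.
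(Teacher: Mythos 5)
Your proposal is correct and follows essentially the same route as the paper: essential self-adjointness via Lemma~\ref{proposition_Theta_1}, \eqref{xxxx}, and Corollary~\ref{corollary_self_adjoint_transformed_triple}; the first equality from ordinary-boundary-triple correspondence of closures; and the last assertions from Proposition~\ref{propi}. The paper disposes of the second equality in one line by citing \cite[Corollary~3.8]{BM14} and noting it "can be checked directly" --- your explicit verification, including the observation that the regularity requirement in $\dom\Theta_{\rm max}(\pm 2)$ is automatically enforced because $\Gamma_1 f_0 \in H^{1/2}(\Sigma;\mathbb{C}^4)$, is exactly that direct check and is correct.
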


\begin{proof}
It follows from Lemma~\ref{proposition_Theta_1} and \eqref{xxxx} that $A_{\pm 2}$ is 
  essentially self-adjoint; cf. Corollary~\ref{corollary_self_adjoint_transformed_triple}. Furthermore, since 
  $\{ L^2(\Sigma; \mathbb{C}^4), \Upsilon_0, \Upsilon_1 \}$ is an ordinary boundary triple 
  the closure $\overline{A_{\pm 2}}$ corresponds to
  the closure of the parameter $\Theta_1(\pm 2)$, that is,
 $$
 \overline{A_{\pm 2}} = T_{\rm max} \upharpoonright \ker\big( \Upsilon_1 - \Theta_{\rm max}(\pm 2) \Upsilon_0 \big),
 $$
 and $\overline{A_{\pm 2}}$ is self-adjoint in $L^2(\mathbb{R}^3; \mathbb{C}^4)$; cf. Lemma~\ref{proposition_Theta_1} and 
 Corollary~\ref{corollary_self_adjoint_transformed_triple}. The second equality in \eqref{def_A_2_closure} can be checked directly and also follows 
 from \cite[Corollary~3.8]{BM14}. The last assertions are consequences of Proposition~\ref{propi}.
\end{proof}

\begin{remark}
The boundary condition $\pm 2 \widetilde{\Gamma}_1 f = -\widetilde{\Gamma}_0 f$ for $f \in \dom T_{\rm max}$ 
in Theorem~\ref{theorem_self_adjoint_bad} 
is understood in $H^{-1/2}(\Sigma; \mathbb{C}^4)$ and with traces interpreted in $H^{-1/2}(\Sigma; \mathbb{C}^4)$ 
(cf. \cite[Proposition~2.1]{OV16}) it has the more explicit form
  \begin{equation*}
   \pm  (f_+|_\Sigma + f_-|_\Sigma) =- i \alpha \cdot \nu (f_+|_\Sigma - f_-|_\Sigma),\qquad f \in \dom T_{\rm max},
  \end{equation*} 
  which is in accordance with Definition~\ref{definition_A_eta}.
\end{remark}

In the next theorem we discuss some spectral properties of the self-adjoint operator $\overline{A_{\pm 2}}$; 
the results complement 
those for $A_\eta$, $\eta\not=\pm 2$, from Theorem~\ref{theorem_spectrum_noncritical}.
We point out that, in contrast to the non-critical case $\eta\not=\pm 2$, in the critical case
$\eta=\pm 2$
the interval $(-m,m)$ may contain essential spectrum, see also Theorem~\ref{theorem_essential_spectrum} below.

\begin{thm} \label{proposition_properties_A_pm2} 
  The following assertions hold for the self-adjoint operators $\overline{A_{\pm 2}}$:
  \begin{itemize}
    \item[\rm (i)] $(-\infty, -m] \cup [m, \infty) \subset \sigma_{\rm ess}(\overline{A_{\pm 2}})$;
    \item[\rm (ii)] $\lambda \in (-m,m)\cap \sigma_{\rm p}(\overline{A_{\pm 2}})$ if and only if 
    $0 \in \sigma_{\rm p}\big( 1 \pm 2 \widetilde{M}(\lambda) \big)$;
    \item[\rm (iii)] $\sigma_{\rm disc}(\overline{A_2}) = \sigma_{\rm disc}(\overline{A_{-2}})$
    and $\sigma_{\rm ess}(\overline{A_2}) = \sigma_{\rm ess}(\overline{A_{-2}})$;
    \item[\rm (iv)] For $\lambda \in \rho(\overline{A_{\pm 2}})$ it holds that
    \begin{equation*}
      (\overline{A_{\pm 2}} - \lambda)^{-1} = (A_0 - \lambda)^{-1} \mp \widetilde{\gamma}(\lambda)
          \big( 1 \pm 2 \widetilde{M}(\lambda) \big)^{-1} 2 \gamma(\overline{\lambda})^*.
    \end{equation*}
  \end{itemize}
\end{thm}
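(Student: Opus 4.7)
The plan is to handle the four items with a combination of direct construction (for (i)), the abstract Krein resolvent formula (for (iv)), the point-spectrum Birman--Schwinger principle applied to the ordinary boundary triple of Theorem~\ref{theorem_ordinary_triple} (for (ii)), and an intertwining of the boundary Weyl function by $\alpha\cdot\nu$ (for (iii)).

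For (i), I would start from the fact that $(-\infty,-m]\cup[m,\infty)=\sigma(A_0)=\sigma_{\rm ess}(A_0)$ and that $\Sigma$ is compact. For $\lambda$ in this set, pick a Weyl sequence $(g_n)\subset C_c^\infty(\mathbb{R}^3;\mathbb{C}^4)$ for $A_0$ at $\lambda$, then translate: $\tilde g_n(x):=g_n(x-x_n)$ with $(x_n)\subset\mathbb{R}^3$ chosen so that each $\tilde g_n$ is supported in $\Omega_-$ and stays uniformly away from $\Sigma$, and $|x_n|\to\infty$. Then $\tilde g_n\in H^1_0(\mathbb{R}^3\setminus\Sigma;\mathbb{C}^4)\subset\dom S\subset\dom\overline{A_{\pm 2}}$, $\|\tilde g_n\|=1$, $\tilde g_n\rightharpoonup 0$, and the translation invariance of the constant-coefficient operator $A_0$ gives $\|(\overline{A_{\pm 2}}-\lambda)\tilde g_n\|=\|(A_0-\lambda)g_n\|\to 0$, so $\lambda\in\sigma_{\rm ess}(\overline{A_{\pm 2}})$.

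For (iv) I would apply the abstract resolvent formula \eqref{Krein_transformed} to the ordinary boundary triple $\{L^2(\Sigma;\mathbb{C}^4),\Upsilon_0,\Upsilon_1\}$ with $\gamma$-field $\beta(\lambda)$ and Weyl function $\mathcal M(\lambda)$ from \eqref{gammm}--\eqref{def_M_OBT}, and the self-adjoint parameter $\Theta_{\rm max}(\pm 2)$. Using \eqref{def_M_OBT} and \eqref{def_theta_max}, a direct computation yields
\begin{equation*}
  \Theta_{\rm max}(\pm 2) - \mathcal M(\lambda) = \mp\tfrac{1}{2}(I_4-\Delta_\Sigma)^{1/4}\bigl(1 \pm 2\widetilde{M}(\lambda)\bigr)(I_4-\Delta_\Sigma)^{1/4},
\end{equation*}
and since $(I_4-\Delta_\Sigma)^{1/4}$ is an isomorphism $L^2(\Sigma;\mathbb{C}^4)\to H^{-1/2}(\Sigma;\mathbb{C}^4)$, one may invert this factorization and telescope $\beta(\lambda)(\Theta_{\rm max}(\pm 2)-\mathcal M(\lambda))^{-1}\beta(\overline{\lambda})^*$ to $\mp\,2\,\widetilde\gamma(\lambda)(1\pm 2\widetilde M(\lambda))^{-1}\gamma(\overline{\lambda})^*$, proving (iv). The same factorization yields (ii): by \eqref{Birman_Schwinger_p}, for $\lambda\in(-m,m)\subset\rho(A_0)$ one has $\lambda\in\sigma_{\rm p}(\overline{A_{\pm 2}})$ iff $0\in\sigma_{\rm p}(\Theta_{\rm max}(\pm 2)-\mathcal M(\lambda))$, and the isomorphism $(I_4-\Delta_\Sigma)^{1/4}$ identifies $\ker(\Theta_{\rm max}(\pm 2)-\mathcal M(\lambda))$ in $L^2(\Sigma;\mathbb{C}^4)$ with $\ker(1\pm 2\widetilde{M}(\lambda))$ in $H^{-1/2}(\Sigma;\mathbb{C}^4)$.

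For (iii) I would extend the identity $M(\lambda)^{-1}=-4\,\alpha\cdot\nu\, M(\lambda)\,\alpha\cdot\nu$ from \eqref{equation_M_inv} to $H^{-1/2}(\Sigma;\mathbb{C}^4)$ by density, using that $\alpha\cdot\nu$ acts as a bounded involution on $H^{-1/2}(\Sigma;\mathbb{C}^4)$ (since $\Sigma$ is $C^2$) and that $\widetilde{M}(\lambda)$ is bijective on $H^{-1/2}(\Sigma;\mathbb{C}^4)$ for $\lambda\in\rho(A_0)$. A short manipulation then produces the intertwining
\begin{equation*}
  \alpha\cdot\nu\,\bigl(1 + 2\widetilde{M}(\lambda)\bigr)\,\alpha\cdot\nu
  = -\tfrac{1}{2}\widetilde{M}(\lambda)^{-1}\bigl(1 - 2\widetilde{M}(\lambda)\bigr),
\end{equation*}
so $1+2\widetilde{M}(\lambda)$ and $1-2\widetilde{M}(\lambda)$ are equivalent modulo left/right multiplication by bounded, boundedly invertible operators on $H^{-1/2}(\Sigma;\mathbb{C}^4)$. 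Transporting this through the factorization used in (ii)--(iv) and combining with the Birman--Schwinger statements \eqref{Birman_Schwinger1} and \eqref{Birman_Schwinger_disc}, together with the consequence of (i) that $\sigma_{\rm disc}(\overline{A_{\pm 2}})\subset(-m,m)\subset\rho(A_0)$, yields $\sigma_{\rm disc}(\overline{A_2})=\sigma_{\rm disc}(\overline{A_{-2}})$ and hence $\sigma_{\rm ess}(\overline{A_2})=\sigma_{\rm ess}(\overline{A_{-2}})$. The main obstacle I anticipate is this last step: the intertwining lives naturally on $H^{-1/2}(\Sigma;\mathbb{C}^4)$, while the Birman--Schwinger identities are formulated for operators in $L^2(\Sigma;\mathbb{C}^4)$, so one must check carefully that conjugation by $(I_4-\Delta_\Sigma)^{1/4}$ together with multiplication by the $C^1$-function $\alpha\cdot\nu$ produces bounded, boundedly invertible transformations on $L^2(\Sigma;\mathbb{C}^4)$ that preserve the full spectral classification.
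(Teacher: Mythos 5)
Your proposal is correct and follows essentially the same route as the paper: item~(i) via a singular sequence of truncated plane waves supported away from $\Sigma$ (your translated Weyl sequence, once made explicit, is precisely the paper's construction in \eqref{singular_sequence}); items~(ii) and~(iv) via the Birman--Schwinger principle \eqref{Birman_Schwinger_p} and the Krein formula \eqref{Krein_transformed} for the ordinary boundary triple $\{L^2(\Sigma;\mathbb{C}^4),\Upsilon_0,\Upsilon_1\}$, combined with the factorization $\Theta_{\rm max}(\pm 2)-\mathcal M(\lambda)=\mp\tfrac{1}{2}(I_4-\Delta_\Sigma)^{1/4}(1\pm 2\widetilde M(\lambda))(I_4-\Delta_\Sigma)^{1/4}$; and item~(iii) via an $\alpha\cdot\nu$-intertwining of $1+2\widetilde M(\lambda)$ with $1-2\widetilde M(\lambda)$ by bounded bijections of $H^{-1/2}(\Sigma;\mathbb{C}^4)$, which is algebraically equivalent to the paper's identity \eqref{equation_kernels}. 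The final concern you flag is handled exactly as you suspect: since $(I_4-\Delta_\Sigma)^{1/4}\colon L^2(\Sigma;\mathbb{C}^4)\to H^{-1/2}(\Sigma;\mathbb{C}^4)$ is an isomorphism, kernels, closed ranges and bounded invertibility of $\Theta_{\rm max}(\pm 2)-\mathcal M(\lambda)$ in $L^2$ correspond to those of $1\pm 2\widetilde M(\lambda)$ on $H^{-1/2}$, so the intertwining transports the spectral classification as required.
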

\begin{proof}
  (i) We verify the inclusion $(-\infty, -m] \cup [m, \infty)\subset\sigma_{\rm ess}(\overline{A_2})$; the 
  inclusion
  for the strength $-2$ can be verified in the same way. 
  For $\lambda \in (-\infty, -m] \cup [m, \infty)$ fixed we construct a singular sequence as follows.
  First of all, since $\Sigma$ is compact we can choose $R>0$ such that $\Sigma \subset B(0, R)$. Next, 
  let $\chi \in C_c^\infty(\mathbb{R})$ be a cutoff function satisfying
  $\chi(r) = 1$ for $|r| < \frac{1}{2}$ and $\chi(r) = 0$ for $|r| > 1$
  and set $x_n := (R + n^2) e_1$, where $e_1=(1, 0, 0)^\top$. We define
  \begin{equation} \label{singular_sequence}
    \psi_n^\lambda(x) := \frac{1}{n^{3/2}} \chi\left( \frac{1}{n} |x - x_n| \right) e^{i \sqrt{\lambda^2 - m^2} x\cdot e_1}
    \left( \sqrt{\lambda^2 - m^2} \alpha_1 + m \beta + \lambda \right) \zeta,
  \end{equation}
  where $\zeta \in \mathbb{C}^4$ is chosen such that 
  $\left( \sqrt{\lambda^2 - m^2} \alpha_1 + m \beta + \lambda \right) \zeta \neq 0$.
  By construction we have $\text{supp}\, \psi_n^\lambda \cap \Sigma = \emptyset$ and thus
  $\psi_n^\lambda \in \dom S \subset \dom \overline{A_2}$; cf. \eqref{sss}. Moreover, it holds
  \begin{equation*}
    \| \psi_n^\lambda \| = \left| \left( \sqrt{\lambda^2 - m^2} \alpha_1 + m \beta + \lambda \right) \zeta \right|
      \cdot \left( \int_{B(0,1)} |\chi(|y|)|^2 \mathrm{d} y \right)^{1/2} = \text{const.},
  \end{equation*}
  and since the supports of the $\psi_n^\lambda$ are pairwise disjoint, the sequence $(\psi_n^\lambda)$
  converges weakly to zero. A straightforward  computation shows
  \begin{equation*}
    \begin{split}
      ( \overline{A_2} &- \lambda ) \psi_n^\lambda(x) = (S - \lambda ) \psi_n^\lambda(x) \\
      &= -\frac{i}{n^{5/2}} e^{i \sqrt{\lambda^2 - m^2} x \cdot e_1} 
          \chi'\left( \frac{1}{n} |x - x_n| \right) \alpha \cdot \frac{x - x_n}{|x-x_n|} 
          \left( \sqrt{\lambda^2 - m^2} \alpha_1 + m \beta + \lambda \right) \zeta \\
      &\qquad + \frac{1}{n^{3/2}} \chi\left( \frac{1}{n} |x - x_n| \right) e^{i \sqrt{\lambda^2 - m^2} x\cdot e_1} \\
      &\qquad \qquad \qquad \qquad \cdot \left( \sqrt{\lambda^2 - m^2} \alpha_1 + m \beta - \lambda \right)
          \left( \sqrt{\lambda^2 - m^2} \alpha_1 + m \beta + \lambda \right) \zeta.
    \end{split}
  \end{equation*}
  Note that $\left( \sqrt{\lambda^2 - m^2} \alpha_1 + m \beta - \lambda \right)
          \left( \sqrt{\lambda^2 - m^2} \alpha_1 + m \beta + \lambda \right) = 0$ by \eqref{equation_anti_commutation}.
          Hence, we have
  \begin{equation*}
    \big\| ( \overline{A_2} - \lambda) \psi_n^\lambda \big\| 
        \leq \frac{C}{n} \left( \int_{B(0,1)} |\chi'(|y|)|^2 \mathrm{d} y \right)^{1/2}
  \end{equation*}
  and therefore, $( \overline{A_2} - \lambda ) \psi_n^\lambda \rightarrow 0$. Thus
  $(\psi_n^\lambda)$ is a singular sequence for $\overline{A_2}$ and $\lambda$ and hence
  $\lambda\in \sigma_{\rm ess}(\overline{A_2})$.
  
  Assertions (ii) and (iv) follow from  \eqref{Birman_Schwinger_p}, \eqref{Krein_transformed}, and the 
  special form of the $\gamma$-field, Weyl function and $\Theta_{\rm max}(2)$ in \eqref{gammm}, \eqref{def_M_OBT}, and \eqref{def_theta_max}; 
  cf. also \cite[Corollary~3.14]{BM14}. 
  
  It remains to prove item~(iii). Since $(-\infty, -m] \cup [m, \infty) \subset \sigma_{\rm ess}(\overline{A_{\pm 2}})$ by~(i),
  it suffices to consider the case $\lambda \in (-m, m)$.
  Assume that $\lambda \in \sigma_{\rm disc}(\overline{A_2})$. 
  Note first that the Birman Schwinger principle \eqref{Birman_Schwinger_disc} implies 
  \begin{equation}\label{assu2}
   0 \in \sigma_{\rm disc}(\Theta_{\rm max}(2) - \mathcal{M}(\lambda)).
  \end{equation}
  A simple calculation using \eqref{equation_M_inv} shows 
  \begin{equation*} 
    -2 M(\lambda) \alpha \cdot \nu \left( \frac{1}{2} I_4 + M(\lambda) \right)
      = -\left( -\frac{1}{2} I_4 + M(\lambda) \right) \alpha \cdot \nu,
  \end{equation*}
  where the operators $M(\lambda)$ and $\alpha \cdot \nu$ are both bijective in $H^{1/2}(\Sigma; \mathbb{C}^4)$. Hence we have 
  \begin{equation} \label{equation_kernels}
    -\left( \frac{1}{2} I_4 + \widetilde M(\lambda) \right) \bigl(2 M(\lambda) \alpha \cdot \nu\bigr)'
      = -(\alpha \cdot \nu)'\left( -\frac{1}{2} I_4 + \widetilde M(\lambda) \right)
  \end{equation}
  with bijective operators $(2 M(\lambda) \alpha \cdot \nu)'$ and $(\alpha \cdot \nu)'$ in $H^{-1/2}(\Sigma; \mathbb{C}^4)$. 
  From \eqref{equation_kernels} 
  we conclude 
  \begin{equation}\label{kerne22}
    \dim \ker (\Theta_{\rm max}(-2) - \mathcal{M}(\lambda)) = \dim \ker(\Theta_{\rm max}(2) - \mathcal{M}(\lambda))
  \end{equation}
  and since $\ran (\Theta_{\rm max}(2) - \mathcal{M}(\lambda))$ is closed it follows from \eqref{equation_kernels} 
  that $\ran (\Theta_{\rm max}(-2) - \mathcal{M}(\lambda))$
  is closed; thus we have
   \begin{equation}\label{assu22}
   0 \in \sigma_{\rm disc}(\Theta_{\rm max}(-2) - \mathcal{M}(\lambda))
  \end{equation}
  and hence $\lambda \in \sigma_{\rm disc}(\overline{A_{-2}})$.
  In the same way one can show that $\lambda \in \sigma_{\rm disc}(\overline{A_{-2}})$
  implies $\lambda \in \sigma_{\rm disc}(\overline{A_2})$.
  
  Finally, we prove $\rho(\overline{A_2}) \cap (-m, m) = \rho(\overline{A_{-2}}) \cap (-m, m)$.
  By exclusion and the previous considerations this implies then
  $\sigma_{\rm ess}(\overline{A_2}) = \sigma_{\rm ess}(\overline{A_{-2}})$.
  Again, we only verify that $\rho(\overline{A_2}) \cap (-m, m) \subset \rho(\overline{A_{-2}}) \cap (-m, m)$,
  the other inclusion follows by symmetry.
  
  Let $\lambda \in \rho(\overline{A_2}) \cap (-m,m)$. Then, by~\eqref{Birman_Schwinger1} we have that 
  $0 \in \rho(\Theta_{\rm max}(2) - \mathcal{M}(\lambda))$.
  This implies that $\frac{1}{2} + \widetilde{M}(\lambda)$ is injective and that 
  $H^{1/2}(\Sigma; \mathbb{C}^4) \subset \ran\big( \frac{1}{2} + \widetilde{M}(\lambda) \big)$.
  Using equation~\eqref{equation_kernels} we deduce that $-\frac{1}{2} + \widetilde{M}(\lambda)$ is injective and that 
  $H^{1/2}(\Sigma; \mathbb{C}^4) \subset \ran\big( -\frac{1}{2} + \widetilde{M}(\lambda) \big)$,
  i.e. $0 \in \rho(\Theta_{\rm max}(-2) - \mathcal{M}(\lambda))$.
  Using again~\eqref{Birman_Schwinger1} we find $\lambda \in \rho(\overline{A_{-2}})$.
\end{proof}

\begin{remark}
  The functions $\psi_n^\lambda$ in \eqref{singular_sequence} are constructed as a solution of 
  the equation $(-i \alpha \cdot \nu + m \beta) f = 0$ times a cutoff function such that
  $\text{supp}\, \psi_n^\lambda \cap \Sigma = \emptyset$. Because of the last property
  $\psi_n^\lambda$ is contained in the domain of the symmetric operator $S$ in \eqref{sss}, and hence 
  $(\psi_n^\lambda)$ is a singular sequence for any self-adjoint extension of $S$ at $\lambda$. 
  This implies that the set $(-\infty, -m] \cup [m, \infty)$ is contained in the essential spectrum 
  of any self-adjoint extension of $S$; cf.~\cite[Theorem~4.4~(i)]{BEHL16_2}. 
\end{remark}

Note that Theorem~\ref{proposition_properties_A_pm2} does not state that the spectrum of 
$\overline{A_{\pm 2}}$ in $(-m,m)$ is purely discrete; in fact
essential spectrum may appear in the gap as well.
In the special case when the interaction support contains a flat part it turns out in the next theorem that 
$0 \in \sigma_{\rm ess}(\overline{A_{\pm 2}})$. Moreover, the functions in $\dom \overline{A_{\pm 2}}$ 
do not possess any Sobolev regularity (of positive order);
cf. Theorem~\ref{theorem_self_adjoint_bad}.

\begin{thm} \label{theorem_essential_spectrum}
  Let $\Sigma \subset \mathbb{R}^3$ be the boundary of a bounded $C^2$-smooth domain such that there 
  exists an open set $\Sigma_0 \subset \Sigma$ which is contained in a plane. 
  Then the following assertions hold for the self-adjoint operators $\overline{A_{\pm 2}}$: 
  \begin{itemize}
    \item[\rm (i)] $0 \in \sigma_{\rm ess}(\overline{A_{\pm 2}})$;
    \item[\rm (ii)] $\dom \overline{A_{\pm 2}} \not\subset H^s(\mathbb{R}^3 \setminus \Sigma; \mathbb{C}^4)$
  for all $s > 0$.
  \end{itemize}
\end{thm}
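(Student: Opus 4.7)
The plan is to establish~(i) by constructing a singular Weyl sequence for $\overline{A_{\pm 2}}$ at $\lambda = 0$ concentrated on the flat part~$\Sigma_0$, and then to deduce~(ii) from~(i) via a closed-graph compactness argument.

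For~(i), I will reformulate the problem through the ordinary boundary triple $\{ L^2(\Sigma; \mathbb{C}^4), \Upsilon_0, \Upsilon_1 \}$ of Theorem~\ref{theorem_ordinary_triple}. Since $\mathcal{M}(0) = 0$ by~\eqref{def_M_OBT}, the essential-spectrum version of the Birman-Schwinger correspondence (obtained by combining \eqref{Birman_Schwinger1} and \eqref{Birman_Schwinger_disc} for the self-adjoint operator $\Theta_{\rm max}(\pm 2) - \mathcal{M}(\lambda)$) reduces the claim to showing $0 \in \sigma_{\rm ess}(\Theta_{\rm max}(\pm 2))$. Using the isomorphism $(I_4 - \Delta_\Sigma)^{1/4}$, this in turn amounts to producing a sequence $(\varphi_n) \subset H^{-1/2}(\Sigma; \mathbb{C}^4)$ with $\| \varphi_n \|_{H^{-1/2}}$ bounded below, $\varphi_n \rightharpoonup 0$, and $\| (\pm \tfrac{1}{2} I_4 + \widetilde M(0))\varphi_n \|_{H^{1/2}} \to 0$; note that by Proposition~\ref{proposition_extension_gamma_Weyl}~(iii) the output lies automatically in $H^{1/2}$ whenever $\varphi_n$ belongs to the range of $\tfrac{1}{2} I_4 \mp \widetilde M(0)$.

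To construct such $\varphi_n$ I will exploit the flat piece $\Sigma_0$. Assume without loss of generality that $\Sigma_0 \subset \{ x_3 = 0 \}$ with unit normal $\nu = e_3$. Combining $\alpha_3 G_0(x) = -G_0(x)\alpha_3$ for tangent $x$ with~\eqref{equation_M_inv}, the Cauchy-type integral $M_{\rm flat}(0)$ on the infinite plane satisfies $M_{\rm flat}(0)^2 = \tfrac{1}{4} I_4$ exactly; its Fourier principal symbol is $\tfrac{1}{2}(\alpha_1\xi_1+\alpha_2\xi_2)/|\xi|$ with eigenvalues $\pm\tfrac{1}{2}$. Fix $x_0$ in the interior of $\Sigma_0$, a direction $\xi_0 \in \mathbb{R}^2 \setminus \{ 0 \}$, an eigenvector $v \in \mathbb{C}^4$ of the symbol at eigenvalue $\mp\tfrac{1}{2}$, and a cutoff $\chi \in C_c^\infty(\mathbb{R}^2)$ with $x_0 + \mathrm{supp}\,\chi \subset \Sigma_0$. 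Consider the wave packets $\widetilde\varphi_n(x) := \chi(x - x_0)\,e^{i n\,\xi_0 \cdot (x - x_0)}\,v$ on $\Sigma_0$, extended by zero to $\Sigma$, and set $\varphi_n := (\tfrac{1}{2}I_4 \mp \widetilde M(0))\widetilde\varphi_n$. Because $\widetilde\varphi_n$ is supported in $\{ x_3 = 0 \}$, one has $M(0)\widetilde\varphi_n |_{\Sigma_0} = M_{\rm flat}(0)\widetilde\varphi_n |_{\Sigma_0}$, while for $x \in \Sigma \setminus \Sigma_0$ the kernel $G_0(x-\cdot)$ is smooth on $\mathrm{supp}\,\widetilde\varphi_n$ and oscillation in $n$ yields arbitrarily fast decay. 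Together with wave-packet calculus on the plane, these observations give $\varphi_n \approx \widetilde\varphi_n$ (so that $\| \varphi_n \|_{H^{-1/2}}$ is bounded below) while $(\pm\tfrac{1}{2}I_4 + \widetilde M(0))\varphi_n = \pm(\tfrac{1}{4}I_4 - \widetilde M(0)^2)\widetilde\varphi_n \to 0$ in $H^{1/2}(\Sigma; \mathbb{C}^4)$. Lifting through $\widetilde\gamma(0)$ and the boundary-triple identification of Theorem~\ref{theorem_ordinary_triple} then produces the singular Weyl sequence $(f_n) \subset \dom\overline{A_{\pm 2}}$ at $\lambda = 0$.

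For~(ii), fix $s > 0$ and suppose for contradiction that $\dom \overline{A_{\pm 2}} \subset H^s(\mathbb{R}^3 \setminus \Sigma; \mathbb{C}^4)$. The closed graph theorem yields
\begin{equation*}
  \|f\|_{H^s(\mathbb{R}^3 \setminus \Sigma)} \le C\bigl( \|f\| + \|\overline{A_{\pm 2}}\,f\| \bigr), \qquad f \in \dom \overline{A_{\pm 2}}.
\end{equation*}
By construction the $\widetilde\varphi_n$ from~(i) have supports in a fixed compact subset of $\Sigma_0$, so the lifts $f_n$ can be arranged to be supported in a fixed bounded neighborhood of $\Sigma$. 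Then $(f_n)$ is bounded in $H^s$ with uniformly compact support, and by Rellich's theorem a subsequence converges strongly in $L^2(\mathbb{R}^3; \mathbb{C}^4)$; combined with $f_n \rightharpoonup 0$ this forces $f_n \to 0$ strongly in $L^2$, contradicting $\|f_n\| = 1$. Since $s > 0$ was arbitrary,~(ii) follows. The main technical obstacle is the quantitative estimate $\|(\tfrac{1}{4} I_4 - \widetilde M(0)^2)\widetilde\varphi_n\|_{H^{1/2}(\Sigma)} \to 0$ at high frequency, which requires sharp control of the principal symbol of $\widetilde M(0)$ near $x_0$ and of the off-support contribution from $\Sigma \setminus \Sigma_0$, together with the careful transport of the Weyl-sequence properties from $\varphi_n$ on $\Sigma$ to $f_n$ on $\mathbb{R}^3$.
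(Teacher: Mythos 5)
Your proof proposal takes a genuinely different route from the paper's. The paper argues for (i) indirectly: assuming $0 \notin \sigma_{\rm ess}(\overline{A_2})$, it introduces the bounded self-adjoint operator $\Xi = (I_4 - \Delta_\Sigma)^{1/4}(\widetilde M(0)^2 - \tfrac14)(I_4 - \Delta_\Sigma)^{1/4}$, shows via the Birman--Schwinger correspondence that $\ker\Xi$ is finite dimensional and $\Xi$ restricted to $(\ker\Xi)^\perp$ is boundedly invertible, and then derives a contradiction from the observation that the operator $\mathcal{A} = M(0)\alpha\cdot\nu + \alpha\cdot\nu M(0)$ has a kernel vanishing on $\Sigma_0\times\Sigma_0$ so that $\widetilde{\mathcal{A}}$ regularizes (maps $H^{-1/2}$ into $H^1$ on the flat patch), forcing $\ran\Xi \subsetneq L^2(\Sigma;\mathbb{C}^4)$. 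Your plan instead constructs an explicit singular Weyl sequence from oscillating wave packets on the flat part. That is a legitimate alternative, and indeed the ingredients overlap (the factorization $\tfrac14 - \widetilde M(0)^2 = -M(0)\,\alpha\cdot\nu\,\widetilde{\mathcal{A}}$ together with the vanishing of $\mathcal{A}$'s kernel on $\Sigma_0\times\Sigma_0$ is exactly what makes the error terms small). However, your write-up has three concrete gaps.

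First, a normalization error: with $\widetilde\varphi_n(x) = \chi(x-x_0) e^{in\xi_0\cdot(x-x_0)} v$ one has $\|\widetilde\varphi_n\|_{L^2} \approx \mathrm{const}$ but $\|\widetilde\varphi_n\|_{H^{-1/2}(\Sigma)} \sim n^{-1/2} \to 0$, because the Fourier content sits near frequency $n\xi_0$. So $\|\varphi_n\|_{H^{-1/2}}$ is \emph{not} bounded below as you claim. You must renormalize $\widetilde\varphi_n$ by $n^{1/2}$, after which the target estimate tightens to $\|(\tfrac14 I_4 - \widetilde M(0)^2)\widetilde\varphi_n\|_{H^{1/2}} = o(n^{-1/2})$ for the un-renormalized sequence. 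This is attainable (one can show $O(n^{-1})$ via integration by parts against the $C^1$ kernel of $\mathcal{A}$ off the flat patch, using that it vanishes identically for both arguments in $\Sigma_0$), but it is precisely the quantitative point you flag as ``the main technical obstacle'' and it needs to be proved, not asserted.

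Second, the step ``$M(0)\widetilde\varphi_n|_{\Sigma_0} = M_{\rm flat}(0)\widetilde\varphi_n|_{\Sigma_0}$ ... these observations give $(\tfrac14 - \widetilde M(0)^2)\widetilde\varphi_n \to 0$'' conflates $M(0)$ and $M_{\rm flat}(0)$ in a way that does not survive squaring: $M(0)^2\widetilde\varphi_n$ involves an intermediate integration over all of $\Sigma$, whereas $M_{\rm flat}(0)^2\widetilde\varphi_n$ integrates over the whole plane $\{x_3 = 0\}$, and the two differ by contributions from outside $\Sigma_0$ where the corresponding kernels are not small pointwise (though they do oscillate). The clean way to control this is exactly the paper's anticommutator factorization via $\mathcal{A}$; your sketch of ``wave-packet calculus'' leaves this comparison untreated.

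Third, in part (ii) the assertion that ``the lifts $f_n$ can be arranged to be supported in a fixed bounded neighborhood of $\Sigma$'' is false: functions in $\dom\overline{A_{\pm 2}}$ restricted to $\Omega_-$ satisfy $(-i\alpha\cdot\nabla + m\beta - \lambda) f = (\overline{A_{\pm 2}} - \lambda) f$, hence solve an elliptic system away from $\Sigma$ and cannot have compact support. Your Rellich argument thus breaks without an additional uniform decay estimate at infinity, which you do not provide. The paper avoids this entirely by showing that if $\dom\overline{A_{\pm 2}} \subset H^s$ then $(\Theta_{\rm max}(\pm 2) - \mathcal{M}(\lambda))^{-1}$ factors through the compact embedding $H^s(\Sigma) \hookrightarrow L^2(\Sigma)$, so the Krein resolvent difference $(\overline{A_{\pm 2}} - \lambda)^{-1} - (A_0 - \lambda)^{-1}$ is compact, forcing $\sigma_{\rm ess}(\overline{A_{\pm 2}}) = \sigma_{\rm ess}(A_0)$ and contradicting (i). You should replace your closed-graph plus Rellich step with this (or with a proof of uniform exponential decay of your $f_n$).
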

\begin{proof}
  (i) The proof of this item is shown in an indirect way and is split into four steps. 
  Again we restrict ourselves to the case $\eta=2$.
  Let us assume that 
  \begin{equation}\label{assu}
  0 \in \rho(\overline{A_2})\cup \sigma_{\rm disc}(\overline{A_2}) 
  \end{equation}
  and 
  consider the operator $\Xi:L^2(\Sigma; \mathbb{C}^4) \rightarrow L^2(\Sigma; \mathbb{C}^4)$
   defined by 
  \begin{equation} \label{def_Xi}
      \Xi \varphi := (I_4 - \Delta_\Sigma)^{1/4} 
        \left( \widetilde{M}(0)^2 - \frac{1}{4} \right) (I_4 - \Delta_\Sigma)^{1/4} \varphi,\quad \varphi\in L^2(\Sigma; \mathbb{C}^4).
  \end{equation}
  
  \noindent
  {\it Step 1.} Observe first that the operator $\Xi$ is bounded and self-adjoint in $L^2(\Sigma; \mathbb{C}^4)$. In fact, 
  $\widetilde{M}(0)^2 - \frac{1}{4} I_4: H^{-1/2}(\Sigma; \mathbb{C}^4) \rightarrow H^{1/2}(\Sigma; \mathbb{C}^4)$
  is bounded by Proposition~\ref{proposition_extension_gamma_Weyl}~(iii) and hence $\Xi$ is well defined
  and bounded in $L^2(\Sigma; \mathbb{C}^4)$.
  Moreover, since $M(0)$ is symmetric by \eqref{equation_diff_m} we have 
  \begin{equation*}
      \big( \Xi \varphi, \varphi \big) 
          = \left( \left( M(0)^2 - \frac{1}{4} \right) (I_4 - \Delta_\Sigma)^{1/4} \varphi, (I_4 - \Delta_\Sigma)^{1/4} \varphi \right)\in\mathbb{R}
  \end{equation*}
  for $\varphi\in H^1(\Sigma; \mathbb{C}^4)$. 
  By a density argument this extends to all $\varphi\in L^2(\Sigma; \mathbb{C}^4)$, so that $\Xi$ is self-adjoint in $L^2(\Sigma; \mathbb{C}^4)$.  
  
  \noindent
  {\it Step 2.} We claim that  the direct sum decomposition
  \begin{equation}\label{xikern}
   \ker \Xi = \ker \Theta_{\rm max}(2) \dot + \ker \Theta_{\rm max}(-2)
  \end{equation}
  holds. In particular, together with \eqref{kerne22} for $\lambda = 0$, $\mathcal{M}(0) = 0$ and assumption \eqref{assu} this implies that 
  $\dim\ker \Xi < \infty$.
  Note first that the sum in \eqref{xikern} is direct since 
  $\ker ( \frac{1}{2} + \widetilde{M}(0) ) \cap \ker ( \frac{1}{2} - \widetilde{M}(0)) = \{ 0 \}$.
  Next, the inclusion
  \begin{equation} \label{inclusion_kernels}
    \ker \Theta_{\rm max}(2) \dot + \ker \Theta_{\rm max}(-2) \subset \ker \Xi
  \end{equation}
  follows easily from
  \begin{equation}\label{gutzuhaben}
   \begin{split}
      \Xi &= (I_4 - \Delta_\Sigma)^{1/4} 
        \left( \frac{1}{2}+\widetilde{M}(0)\right) \left( -\frac{1}{2}+\widetilde{M}(0)\right)(I_4 - \Delta_\Sigma)^{1/4}\\
        &=(I_4 - \Delta_\Sigma)^{1/4} 
        \left( -\frac{1}{2}+\widetilde{M}(0)\right) \left( \frac{1}{2}+\widetilde{M}(0)\right)(I_4 - \Delta_\Sigma)^{1/4}.
        \end{split}
  \end{equation}
  Furthermore, \eqref{gutzuhaben} also yields
  \begin{equation*}
    \left( \widetilde{M}(0) - \frac{1}{2} \right) (I_4 - \Delta_\Sigma)^{1/4} \big(\ker \Xi \ominus \ker \Theta_{\rm max}(-2)\big)
    \subset \ker \left( \frac{1}{2} + \widetilde{M}(0) \right),
  \end{equation*}
  where $\ker \Xi \ominus \ker \Theta_{\rm max}(-2)$ denotes the orthogonal complement of $\ker \Theta_{\rm max}(-2)$
  in the closed subspace $\ker \Xi$ of $L^2(\Sigma; \mathbb{C}^4)$.
  Since the operator 
  $$( \widetilde{M}(0) - \frac{1}{2}) (I_4 - \Delta_\Sigma)^{1/4} 
  \upharpoonright \bigl(\ker \Xi \ominus \ker \Theta_{\rm max}(-2)\bigr)$$ 
  is injective and 
  $(I_4 - \Delta_\Sigma)^{1/4}$ is an isomorphism we find
  \begin{equation*}
    \begin{split}
      \dim \ker \Xi &\leq \dim \ker \left( \frac{1}{2} + \widetilde{M}(0) \right) 
          + \dim \ker \left( \frac{1}{2} - \widetilde{M}(0) \right) \\
      &= \dim \ker \Theta_{\rm max}(2) + \dim \ker \Theta_{\rm max}(-2),
    \end{split}
  \end{equation*}
  which together with \eqref{inclusion_kernels} implies \eqref{xikern}.   
  
   \noindent
  {\it Step 3.} Now we consider the restriction of the self-adjoint operator $\Xi$ onto the invariant subspace 
  $\mathscr{H} := (\ker\Xi)^\bot$.
  From the above considerations it is clear that $\Xi\!\upharpoonright_{\mathscr{H}}$
  is a bounded, self-adjoint and injective operator in $\mathscr{H}$.
  We claim that the operator $(\Xi\!\upharpoonright_{\mathscr{H}})^{-1}$ is bounded and everywhere defined 
  in $\mathscr{H}$.
  
  In the following let $P_\pm$ be the orthogonal projectors onto $\ker \Theta_{\rm max}(\pm 2)$
  and observe that the self-adjoint operators
  \begin{equation*}
   \Theta_{\rm max}(\pm 2)\upharpoonright_{(1-P_\pm) L^2(\Sigma; \mathbb{C}^4) }
  \end{equation*}
    are boundedly invertible in $(1-P_\pm) L^2(\Sigma; \mathbb{C}^4)$; this follows from~\eqref{assu},
    Theorem~\ref{proposition_properties_A_pm2}~(iii) and \eqref{Birman_Schwinger_disc}. 
    We shall denote these restrictions by $\Theta^\pm_{\rm max}(\pm 2)$. 
    Let $\varphi\in\ran\Xi \subset \mathscr{H}$ and choose $\psi\in\mathscr{H}$ 
    such that $\varphi=\Xi\psi$. It is easy to see that
    \begin{equation*}
     \psi_\pm:=- (I_4 - \Delta_\Sigma)^{-1/4} \left( \mp \frac{1}{2} + \widetilde{M}(0) \right) (I_4 - \Delta_\Sigma)^{1/4}
     \psi \in \dom \Theta_{\rm max}(\pm 2) 
    \end{equation*}
    satisfy $\varphi=\Xi\psi=\Theta_{\rm max}(\pm 2)\psi_\pm$. Then we have $\psi_\pm= \Theta^\pm_{\rm max}(\pm 2)^{-1}\varphi + P_\pm\psi_\pm$
and hence 
 \begin{equation}
  (\Xi\!\upharpoonright_{\mathscr{H}})^{-1}\varphi=\psi=\psi_+-\psi_-
  =\Theta^+_{\rm max}(2)^{-1}\varphi - \Theta^-_{\rm max}(- 2)^{-1}\varphi + P_+\psi_+ - P_-\psi_-.
 \end{equation}
Since $P_-\psi_- - P_+\psi_+ \in\ker\Xi=\mathscr{H}^\bot$ by \eqref{xikern} we find
\begin{equation*}
\begin{split}
 \Vert (\Xi\!\upharpoonright_{\mathscr{H}})^{-1}\varphi\Vert^2 & \leq \Vert (\Xi\!\upharpoonright_{\mathscr{H}})^{-1}\varphi\Vert^2 + \Vert P_-\psi_- - P_+\psi_+\Vert^2\\
 &=\bigl\Vert (\Xi\!\upharpoonright_{\mathscr{H}})^{-1}\varphi +  (P_-\psi_- - P_+\psi_+) \bigr\Vert^2\\
 &=\bigl\Vert \Theta^+_{\rm max}(2)^{-1}\varphi - \Theta^-_{\rm max}(- 2)^{-1}\varphi \bigr\Vert^2
 \end{split}
 \end{equation*}
and as $\Theta^\pm_{\rm max}(\pm 2)^{-1}$ are bounded it follows that 
$(\Xi\!\upharpoonright_{\mathscr{H}})^{-1}$ is bounded in $\mathscr{H}$. As $(\Xi\!\upharpoonright_{\mathscr{H}})^{-1}$ 
is self-adjoint in $\mathscr{H}$ it is clear that it is defined on $\mathscr{H}$.

  \noindent
  {\it Step 4.}
  Now we show that the assumption on $\Sigma_0 \subset \Sigma$ implies 
  that
  there are infinitely many linearly independent 
  functions which do not belong to the range of the operator $\Xi$ in \eqref{def_Xi}. 
  This is a contradiction to the fact that $\dim \ker\Xi $ is finite and 
  that $\Xi\!\upharpoonright_{\mathscr{H}}$ is boundedly invertible with an inverse defined on all of $\mathscr{H}$; thus \eqref{assu} can not be true. 
  
  As in the proof of Proposition~\ref{proposition_extension_gamma_Weyl}~(iii) consider
  \begin{equation*}
    \mathcal{A} = M(0) \alpha \cdot \nu + \alpha \cdot \nu M(0)
  \end{equation*}
  in $H^{1/2}(\Sigma; \mathbb{C}^4)$
  and recall that this operator admits a bounded extension
  $\widetilde{\mathcal{A}}: H^{-1/2}(\Sigma; \mathbb{C}^4) \rightarrow H^{1/2}(\Sigma; \mathbb{C}^4)$;
  cf. \cite[Proposition~2.8]{OV16} or~\eqref{equation_def_A} and the discussion afterwards. Since 
  $M(0)^2 - \frac{1}{4} I_4 = M(0) \alpha \cdot \nu \mathcal{A}$ 
  (see the proof of Proposition~\ref{proposition_extension_gamma_Weyl}~(iii))  a density argument leads to 
  \begin{equation*}
    \widetilde{M}(0)^2 - \frac{1}{4} I_4 
        = M(0) \alpha \cdot \nu \widetilde{\mathcal{A}}.
  \end{equation*}
  Since $M(0)$ and $\alpha \cdot \nu$ are bijective in $H^{1/2}(\Sigma; \mathbb{C}^4)$ and 
  $(I_4 - \Delta_\Sigma)^{1/4}$
  is an isomorphism, we see by comparing with \eqref{def_Xi} that infinitely many linearly independent functions do not belong to 
  $\ran \Xi$ if and only if infinitely many linearly independent functions do not belong to 
  $\ran \widetilde{\mathcal{A}}$. 
  This statement will be shown now.

  Making use of \eqref{equation_anti_commutation} we see that
  $\mathcal{A}$ is an integral operator of the form
  \begin{equation}\label{aaaaa}
    \mathcal{A} \varphi(x) = \int_\Sigma K(x, z) \varphi(z) \mathrm{d} \sigma(z)
  \end{equation}
  with integral kernel 
  \begin{equation*}
    K(x, z) = G_0(x-z)\alpha \cdot (\nu(z) - \nu(x)) + \frac{i e^{-m|x-z|}}{2 \pi |x-z|^3} (1 + m |x-z|) \nu(x)\cdot(x-z),
  \end{equation*}
  where $G_0$ is the Green's function for the resolvent of $A_0$ given by \eqref{def_G_lambda}. 
  Note that $\vert K(x,z)\vert\leq C\vert x-z\vert^{-1}$ and hence the integral operator in \eqref{aaaaa} is not singular 
  (see also \cite[equation~(22) and Lemma~3.5]{AMV14} and \cite[Proposition~3.11]{F95}). 
  Let $\Sigma_1 \subset \Sigma$ such that $\overline{\Sigma_1} \subset \Sigma_0$.
  Note that $K(x,z) = 0$, if $x, z \in \Sigma_0$.
  Let $U_1 \subset \mathbb{R}^2$ and $\phi: U_1 \rightarrow \mathbb{R}^3$ be a linear affine function 
  which parametrizes $\Sigma_1$,
  i.e. $\ran \phi = \Sigma_1$, and
  let $\varphi \in H^{1/2}(\Sigma; \mathbb{C}^4)$ be fixed.
  Since $\nu$ is constant on $\Sigma_0$ and $\overline{\Sigma_1} \subset \Sigma_0$, we see that the mapping 
  $U_1 \ni u \mapsto K(\phi(u), z)$ is $C^\infty$-smooth for any $z\in\Sigma$ and the mapping $\Sigma\ni z\mapsto  K(\phi(u), z)$ is $C^1$-smooth for any $u\in U_1$.
  From this, it is easy to deduce that $(\mathcal{A} \varphi) \circ \phi$ is differentiable on $U_1$ and
  \begin{equation*}
    \partial_{u_j} (\mathcal{A} \varphi)( \phi (u)) = \int_\Sigma \partial_{u_j} K(\phi(u), z) \varphi(z) \mathrm{d} \sigma(z),
    \qquad j \in \{ 1, 2 \}.
  \end{equation*}
  Let us denote the elements of the $4 \times 4$-matrix $K(x, z)$ by $K_{l m}(x, z)$ and the elements of
  $\varphi(x) \in \mathbb{C}^4$ by $\varphi_m(x)$, $l, m \in \{ 1, 2, 3, 4\}$. Then the last observation
  implies, in particular, that
  \begin{equation*}
    \begin{split}
      \| \partial_{u_j} \mathcal{A} \varphi \|_{L^2(\Sigma_1; \mathbb{C}^4)}^2
        &= C_1 \int_{U_1} \left| \partial_{u_j} \mathcal{A} \varphi(\phi(u)) \right|^2 \mathrm{d} u \\
      &= C_1 \int_{U_1} \left| \int_\Sigma \partial_{u_j} K(\phi(u), z) \varphi(z) \mathrm{d} \sigma(z) \right|^2 \mathrm{d} u \\
      &= C_1 \int_{U_1} \sum_{m, l = 1}^4 \left| \big( \partial_{u_j} K_{l m}(\phi(u), \cdot), \varphi_m \big)_{1/2 \times -1/2} \right|^2 \mathrm{d} u \\
      &\leq C_1 \int_{U_1} \big\| \partial_{u_j} K(\phi(u), \cdot)\big\|_{H^{1/2}(\Sigma; \mathbb{C}^4)}^2 
          \| \varphi \|_{H^{-1/2}(\Sigma; \mathbb{C}^4)} \mathrm{d} u \\
      &= C_2 \| \varphi \|_{H^{-1/2}(\Sigma; \mathbb{C}^4)}.
    \end{split}
  \end{equation*}
  By continuity we obtain from this observation that $\widetilde{\mathcal{A}} \varphi|_{\Sigma_1} \in H^1(\Sigma_1; \mathbb{C}^4)$
  for any $\varphi \in H^{-1/2}(\Sigma; \mathbb{C}^4)$.
  Thus, any $\psi \in H^{1/2}(\Sigma; \mathbb{C}^4)$ with $\psi|_{\Sigma_1} \notin H^1(\Sigma_1; \mathbb{C}^4)$
  is not contained in $\ran \mathcal{A}$. Hence, there are infinitely many linearly independent functions
  in $H^{1/2}(\Sigma; \mathbb{C}^4)$ that are not contained in $\ran \widetilde{\mathcal{A}}$. The proof of item (i) is complete.

  \vskip 0.2cm
  \noindent
  (ii) We show that
  $\dom \overline{A_{2}} \subset H^s(\mathbb{R}^3 \setminus \Sigma; \mathbb{C}^4)$
  for some $s > 0$ implies that the resolvent difference 
  $(\overline{A_{2}} - \lambda)^{-1}-(A_0 - \lambda)^{-1}$ is compact for $\lambda \in \mathbb{C} \setminus \mathbb{R}$.
  Since $\sigma_{\rm ess}(A_0)=(-\infty,-m]\cup[m,\infty)\not=\sigma_{\rm ess}(\overline{A_{2}})$ this is a contradiction.

  For $s \in [0, 1]$ consider the Hilbert spaces 
  \begin{equation*}
    \mathcal{H}^s := H^s(\mathbb{R}^3 \setminus \Sigma; \mathbb{C}^4) \cap \dom T_{\rm max}
  \end{equation*}
  equipped with the norms
  \begin{equation*}
    \| f \|_{\mathcal{H}^s}^2 := \| f \|_{H^s(\mathbb{R}^3 \setminus \Sigma; \mathbb{C}^4)}^2 
        + \| T_{\rm max} f\|_{L^2(\mathbb{R}^3; \mathbb{C}^4)}^2, \qquad f \in \mathcal{H}^s.
  \end{equation*}
  Then, the trace mappings 
  $\Gamma_j^1 := \Gamma_j: H^1(\mathbb{R}^3 \setminus \Sigma; \mathbb{C}^4) = \mathcal{H}^1
  \rightarrow H^{1/2}(\Sigma; \mathbb{C}^4)$ and 
  $\Gamma_j^0 := \widetilde{\Gamma}_j: \dom T_{\rm max} = \mathcal{H}^0 \rightarrow H^{-1/2}(\Sigma; \mathbb{C}^4)$
  are continuous for $j \in \{ 0, 1\}$. By interpolation  we get that also 
  \begin{equation*}
    \Gamma_j^s := \widetilde{\Gamma}_j \upharpoonright \mathcal{H}^s: \mathcal{H}^s \rightarrow H^{s-1/2}(\Sigma; \mathbb{C}^4)
  \end{equation*}
  is continuous for any $s \in [0, 1]$.
  
  Let us assume now that 
  $\dom \overline{A_{2}} = \ker \big( \Upsilon_1 - \Theta_{\rm max}(2) \Upsilon_0 \big) \subset \mathcal{H}^s$
  for some $s > 0$. Then we have  $\dom \Theta_{\rm max}(2) \subset H^s(\Sigma; \mathbb{C}^4)$ 
  as $\Upsilon_0 = (I_4 - \Delta_\Sigma)^{-1/4} \widetilde{\Gamma}_0$.
  Let $\beta$ and $\mathcal M$ be the $\gamma$-field and Weyl function  
  corresponding to
  $\{ L^2(\Sigma; \mathbb{C}^4), \Upsilon_0, \Upsilon_1 \}$; cf. \eqref{gammm} and \eqref{def_M_OBT}.
  For $\lambda\in\mathbb C\setminus\mathbb R$ we have  
  \begin{equation*}
    \ran \big( \Theta_{\max}(2) - \mathcal{M}(\lambda) \big)^{-1}
        = \dom \big( \Theta_{\max}(2) - \mathcal{M}(\lambda) \big)
        \subset H^s(\Sigma; \mathbb{C}^4)
  \end{equation*}
  and  $\big( \Theta_{\max}(2) - \mathcal{M}(\lambda) \big)^{-1}$ is continuous in 
  $L^2(\Sigma; \mathbb{C}^4)$. It follows that the operator
  \begin{equation*}
    \big( \Theta_{\max}(2) - \mathcal{M}(\lambda) \big)^{-1}: L^2(\Sigma; \mathbb{C}^4) \rightarrow H^s(\Sigma; \mathbb{C}^4)
  \end{equation*}
  is closed and hence 
  continuous. As the embedding 
  $\iota_s: H^s(\Sigma; \mathbb{C}^4) \rightarrow L^2(\Sigma; \mathbb{C}^4)$ is compact we conclude that 
  $(\Theta_{\max}(2) - \mathcal{M}(\lambda))^{-1}$ is a compact operator
  in $L^2(\Sigma; \mathbb{C}^4)$. Eventually \eqref{Krein_transformed} yields that
  \begin{equation*}
      (\overline{A_{2}} - \lambda)^{-1} 
        - (A_0 - \lambda)^{-1} = \beta(\lambda) \big( \Theta_{\max}(2) - \mathcal{M}(\lambda) \big)^{-1}
        \beta(\overline{\lambda})^* ,\quad \lambda\in\mathbb C\setminus\mathbb R,
  \end{equation*}
  is a compact operator in $L^2(\mathbb{R}^3; \mathbb{C}^4)$. This completes the proof.
\end{proof}

\end{document}